\def\titlerunning#1{\gdef\titrun{#1}}
\def\author#1{\gdef\autrun{\def\and{\unskip, }#1}\gdef\@author{#1}}
\def\address#1{{\def\and{\\\hspace*{18pt}}\renewcommand{\thefootnote}{}%
\footnote {#1}}%
\markboth{\autrun}{\titrun}}
\def\email#1{e-mail: #1}
\def\subjclass#1{{\renewcommand{\thefootnote}{}%
\footnote{\emph{Mathematics Subject Classification (2010):} #1}}}
\def\keywords#1{\par\medskip
\noindent\textbf{Keywords.} #1}
\newtheorem{thm}{Theorem}[section]
\newtheorem{prob}[thm]{Problem}
\newtheorem{theorem}{Theorem}[section]
\newtheorem{corollary}[theorem]{Corollary}
\newtheorem{lemma}[theorem]{Lemma}
\newtheorem{mainthm}[thm]{Main Theorem}
\theoremstyle{definition}
\newtheorem{defin}[thm]{Definition}
\theoremstyle{definition}
\newtheorem{definition}[theorem]{Definition}
\newtheorem{remark}[theorem]{Remark}
\newtheorem{example}[theorem]{Example}
\newtheorem{notation}[theorem]{Notation}
\newtheorem*{xrem}{Remark}
\numberwithin{equation}{section}
\newcommand{\mult}{\operatorname{mult}}
\newcommand{\qsim}{\sim_{\mathbb{Q}}}
\newcommand{\red}{\textcolor{black}}
\begin{document}


\baselineskip=17pt


\titlerunning{Affine cones over smooth cubic surfaces}

\title{Affine cones over smooth cubic surfaces}

\author{Ivan Cheltsov
\and 
Jihun Park 
\and 
Joonyeong Won}

\date{}

\maketitle

\address{I. Cheltsov: 
School of Mathematics, The University of Edinburgh, James Clerk Maxwell Building, The King's Buildings, Mayfield Road, Edinburgh EH9 3JZ, UK;
\email{I.Cheltsov@ed.ac.uk}
\and
J. Park: 
Center for Geometry and Physics, Institute for Basic Science (IBS), 77 Cheongam-ro, Nam-gu, Pohang, Gyeongbuk, 790-784, Korea;
Department of Mathematics, POSTECH, 77 Cheongam-ro, Nam-gu, Pohang, Gyeongbuk, 790-784, Korea;
\email{wlog@postech.ac.kr}
\and
J. Won: 
KIAS 85 Hoegiro Dongdaemun-gu, Seoul 130-722, Korea;
\email{leonwon@kias.re.kr}}


\subjclass{Primary 14E15, 14J45, 14R20, 14R25; Secondary 14C20, 14E05, 14J17}


\begin{abstract}
We show that affine cones over smooth cubic surfaces do not admit
non-trivial $\mathbb{G}_{a}$-actions.

\keywords{affine cone, $\alpha$-invariant, anticanonical divisor, cylinder, del Pezzo surface, $\mathbb{G}_a$-action, log canonical singularity.}
\end{abstract}

Throughout this article, we assume that all considered varieties are
algebraic and defined over an algebraically closed field of characteristic $0$.

\section{Introduction}
\label{sec:intro}
One of the motivations for the present article originates from the articles of H.~A. Schwartz (\cite{Sch1873})  and G.~H. Halphen (\cite{Ha1883}) in the middle of 19th century, where they studied polynomial solutions of Brieskorn-Pham polynomial equations in three variables after L.~Euler (1756), J.~Liouville (1879) and so fourth (\cite{DaGra95}). Meanwhile, since the middle of 20th century  the study of rational singularities has witnessed great development (\cite{Ar66}, \cite{Br68}, \cite{Lau72}). These two topics, one classic and the other modern, encounter each other in contemporary mathematics. For instance, there is a strong connection between the existence of a rational curve on a normal affine surface, i.e., a polynomial solution to algebraic equations,  and rational singularities (\cite{FZ2003}). 

As an additive analogue of toric geometry, unipotent group actions, specially $\mathbb{G}_a$-actions, on varieties are very attractive objects to study. Indeed, $\mathbb{G}_a$-actions have been investigated for their own sake (\cite{Ba84}, \cite{Ka04}, \cite{MaMi03}, \cite{Sn89}, \cite{Wi90}). 
We also observe that $\mathbb{G}_a$-actions appear in the study of rational singularities. In particular, the article \cite{FZ2003} shows that a Brieskorn-Pham surface singularity is a cyclic quotient singularity if and only if the surface admits a non-trivial  
regular $\mathbb{G}_{a}$-action. 
Considering its 3-dimensional analogue,  H.~Flenner and M.~Zaidenberg in 2003  proposed the following question (\cite[Question~2.22]{FZ2003}):\medskip
\begin{center}
\begin{minipage}{.8\linewidth}
 \emph{Does the affine Fermat cubic threefold
$
x^3+y^3+z^3+w^3=0$
in $\mathbb{A}^4$ admit a non-trivial regular $\mathbb{G}_{a}$-action?
}
\end{minipage}
\end{center}
\medskip
Even though it is simple-looking, this problem stands open for 10 years.
It turns out that this problem is purely
geometric and can be considered in a much wider setting 
(\cite{KPZ11a}, \cite{KPZ11b}, \cite{KPZ12a}, \cite{KPZ12b},~\cite{Pe11}).

To see the problem from a wider view point, we let $X$ be a smooth projective variety with a polarisation $H$, where $H$ is an ample
divisor on $X$. The \emph{generalized cone} over
$(X,H)$ is the  affine variety defined by
$$
\hat{X}=\mathrm{Spec}\left(\bigoplus_{n\geqslant 0}H^0\left(X, \mathcal{O}_{X}\left(nH\right)\right)\right).%
$$
\begin{remark}
\label{remark:generalized-cone} The affine variety $\hat{X}$ is the usual cone
over $X$ embedded in a projective space by the linear system $|H|$
provided that $H$ is very ample and the image of the variety $X$
is projectively normal.
\end{remark}

Let $S_d$ be a smooth del Pezzo surface of degree $d$ and let $\hat{S}_d$ be the generalized cone  over $(S_d, -K_{S_d})$.  
For $3\leqslant d\leqslant 9$, the anticanonical divisor $-K_{S_d}$ is very ample and the generalized cone $\hat{S}_d$ is the affine cone in $\mathbb{A}^{d+1}$ over  the smooth variety anticanonically embedded in $\mathbb{P}^d$. In particular, for $d=3$, the cubic surface $S_3$ is defined by a cubic homogenous polynomial equation $F(x,y,z,w)=0$ in $\mathbb{P}^3$, and hence the generalized cone $\hat{S}_3$ is the affine hypersurface in $\mathbb{A}^4$ defined by the equation $F(x,y,z,w)=0$.
For $d=2$, the generalized cone $\hat{S}_2$ is the affine cone in $\mathbb{A}^{4}$ over  the smooth hypersurface  in the weighted projective space
$\mathbb{P}(1,1,1,2)$ defined by a quasi-homogeneous polynomial of degree $4$. For $d=1$, the generalized cone $\hat{S}_1$ is the affine cone in $\mathbb{A}^{4}$ over  the smooth hypersurface  in the weighted projective space
$\mathbb{P}(1,1,2,3)$ defined by a quasi-homogeneous polynomial of degree $6$ (\cite[Theorem~4.4]{HiWa81}).

It is natural to ask
whether the affine variety $\hat{S}_d$ admits a non-trivial $\mathbb{G}_a$-action.
The problem at the beginning is just  a special case of this.

T.~Kishimoto, Yu.~Prokhorov and M.~Zaidenberg have been studying this generalized problem and proved the following:
\begin{theorem}\label{theorem:KPZ-actions-4-9}
If $4\leqslant d\leqslant 9$, then the generalized cone $\hat{S}_d$ admits an effective $\mathbb{G}_a$-action.
\end{theorem}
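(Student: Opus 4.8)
My plan is to reduce the assertion to a geometric statement about the surfaces $S_d$ themselves, dispose of the large-degree cases at once, and concentrate the work on the two smallest degrees. The reduction is the one introduced by Kishimoto, Prokhorov and Zaidenberg (\cite{KPZ11a}, \cite{KPZ11b}, \cite{KPZ12a}): for a smooth projective variety $Y$ with an ample polarisation $H$, the generalised cone over $(Y,H)$ admits an effective $\mathbb{G}_{a}$-action if and only if $Y$ contains an \emph{$H$-polar cylinder}, that is, a Zariski open subset $U\subset Y$ isomorphic to $Z\times\mathbb{A}^{1}$ for some variety $Z$ and such that $Y\setminus U=\operatorname{Supp}(D)$ for some effective $\mathbb{Q}$-divisor $D\qsim H$. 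As recalled above, $-K_{S_d}$ is very ample for $3\leqslant d\leqslant 9$, so it is enough to construct, for each $4\leqslant d\leqslant 9$, a $(-K_{S_d})$-polar cylinder on $S_d$; only the ``if'' direction of the criterion is needed.

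For $6\leqslant d\leqslant 9$ such a cylinder is immediate. If $S_9=\mathbb{P}^2$, take a line $L$: then $\mathbb{P}^2\setminus L\cong\mathbb{A}^1\times\mathbb{A}^1$ and $3L\qsim -K_{\mathbb{P}^2}$. If $S_8=\mathbb{P}^1\times\mathbb{P}^1$, remove one ruling from each of the two families. If $S_d$ is the blow-up $\sigma\colon S_d\to\mathbb{P}^2$ of $9-d$ general points (the remaining degree-$8$ surface $\mathbb{F}_1$ and the degree-$7$ and degree-$6$ surfaces), delete the exceptional curves $E_1,\dots,E_{9-d}$ together with the strict transforms of one or two suitable lines through the blown-up points, so that the complement maps onto $\mathbb{P}^2$ minus a single line ($\cong\mathbb{A}^1\times\mathbb{A}^1$) or minus two lines meeting in a point ($\cong\mathbb{A}^1\times\mathbb{G}_m$); a short computation in $\operatorname{Pic}(S_d)\otimes\mathbb{Q}$ then furnishes positive rational numbers $a_i$ with $\sum a_iC_i\qsim -K_{S_d}$, the $C_i$ being the deleted curves.

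The cases $d=5$ and $d=4$ carry the real content, and I would handle them through the conic bundle structures $\pi\colon S_d\to\mathbb{P}^1$ carried by these surfaces. Such a $\pi$ has $8-d$ reducible fibres, each a union of two $(-1)$-curves meeting in one point, and the natural recipe is to delete one fibre of $\pi$ (so that the base becomes $\mathbb{A}^1$), one component from each reducible fibre, and a section of $\pi$; what remains fibres over $\mathbb{A}^1$ with every fibre isomorphic to $\mathbb{A}^1$ and no multiple fibres, hence is an $\mathbb{A}^1$-bundle over $\mathbb{A}^1$ and so is isomorphic to $\mathbb{A}^2$. The delicate point---and the one I expect to be the main obstacle---is the polarity constraint: writing out $D\qsim -K_{S_d}$ in $\operatorname{Pic}(S_d)\otimes\mathbb{Q}$ in terms of the deleted curves generically forces one of the prescribed coefficients to be non-positive, so one must place the fibration, the deleted fibre components and the section in a special configuration and then check that the resulting linear system admits a strictly positive solution. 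For $d=4$ the bookkeeping is hardest: $\operatorname{Pic}(S_4)$ has rank $6$, the quartic del Pezzo surface in $\mathbb{P}^4$ has $16$ lines, and its conic bundles have four reducible fibres, so identifying a configuration for which all the conditions hold simultaneously requires genuine care (compare also \cite{Pe11}); once it is found, verifying that the complement is a cylinder and that an effective $D\qsim -K_{S_4}$ is supported on it is a finite check.

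Putting the cases together yields a $(-K_{S_d})$-polar cylinder, and therefore an effective $\mathbb{G}_{a}$-action on $\hat{S}_d$, for every $4\leqslant d\leqslant 9$. No analogue of these constructions survives at $d=3$; proving that a smooth cubic surface admits no anticanonical polar cylinder, equivalently that $\hat{S}_3$ has no non-trivial $\mathbb{G}_{a}$-action, is the task undertaken in the rest of the paper.
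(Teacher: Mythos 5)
The paper offers no argument of its own here: the statement is quoted from \cite[Theorem~3.19]{KPZ11a}, whose proof does proceed exactly along the lines you set up, namely exhibiting a $(-K_{S_d})$-polar cylinder and invoking the cylinder criterion. Your reduction and your constructions for $6\leqslant d\leqslant 9$ are correct. The difficulty is with the two cases you yourself identify as carrying the real content.

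For $d=4$ and $d=5$ you do not exhibit a configuration; you only assert that one exists after suitable care. In fact the recipe you propose --- delete one fibre $F_0$ of a conic bundle $\pi$, one component $G_i$ of each reducible fibre $F_i=G_i+F_i''$, and a section $\Sigma$ --- can never be made $(-K_{S_d})$-polar when $d\leqslant 5$, for any choice of conic bundle, deleted components and section. For the complement to fibre over $\mathbb{A}^1$ with every fibre isomorphic to $\mathbb{A}^1$, as your argument requires, the section must meet each reducible fibre in its deleted component, so $\Sigma\cdot F_i''=0$. Writing $D=a_0F_0+\sum_i a_iG_i+b\Sigma\qsim -K_{S_d}$ and intersecting with $F_i''$ forces $a_i=-K_{S_d}\cdot F_i''=1$ (note $F\cdot F_i''=0$ for the fibre class $F$); intersecting with a general fibre forces $b=2$; and then
$$
d=K_{S_d}^2=D\cdot\left(-K_{S_d}\right)=2a_0+(8-d)+2\left(2+\Sigma^2\right),
$$
that is, $a_0=d-6-\Sigma^2$. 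Since the only irreducible curves of negative self-intersection on a smooth del Pezzo surface are $-1$-curves, $\Sigma^2\geqslant -1$, whence $a_0\leqslant d-5$: the coefficient of $F_0$ is $\leqslant 0$ for $d=5$ and $\leqslant -1$ for $d=4$, so no effective $D\qsim -K_{S_d}$ is supported exactly on the deleted curves. (Taking the deleted full fibre to be one of the reducible ones changes nothing: the same intersection computations force the coefficient of one of its two components to vanish.) So for $d=4,5$ the approach is not merely incomplete but unworkable, and this is consistent with your own computation of $b=2$, which would force $\alpha(S_d)\leqslant\frac12$, false for $d\leqslant 5$ by Theorem~\ref{theorem:GAFA}. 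A construction that does work is of a different shape: take a smooth conic $C$ through the $9-d$ blown-up points (unique for $d=4$, cf.\ Remark~\ref{remark:del-Pezzo-high-degree}) and a line $L$ tangent to $C$ away from those points; then $S_d\setminus(\tilde{C}\cup\tilde{L}\cup E_1\cup\dots\cup E_{9-d})\cong\mathbb{P}^2\setminus(C\cup L)\cong\mathbb{G}_m\times\mathbb{A}^1$, and $a\tilde{C}+(3-2a)\tilde{L}+\sum_i(a-1)E_i\qsim -K_{S_d}$ has all coefficients positive for any $1<a<\frac32$.
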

\begin{proof}
See \cite[Theorem~3.19]{KPZ11a}.
\end{proof}
\begin{theorem}\label{theorem:KPZ-actions-1-2}
If $d\leqslant 2$, then the generalized cone $\hat{S}_d$ does not admit a non-trivial $\mathbb{G}_a$-action.
\end{theorem}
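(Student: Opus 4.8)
The plan is to convert the statement into an intrinsic property of the del Pezzo surface $S_d$ and then analyse fibrations on surfaces of degree $1$ and $2$. The key input is the criterion of Kishimoto, Prokhorov and Zaidenberg (see \cite{KPZ11a}, \cite{KPZ11b}): for a smooth projective variety $Y$ with ample polarisation $L$, the generalized cone over $(Y,L)$ carries a non-trivial $\mathbb{G}_a$-action if and only if $Y$ contains an \emph{$L$-polar cylinder}, i.e.\ a Zariski-open subset $U\cong Z\times\mathbb{A}^1$, with $Z$ a curve, such that $Y\setminus U=\mathrm{Supp}(D)$ for some effective $\mathbb{Q}$-divisor $D\qsim L$. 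Taking $Y=S_d$ and $L=-K_{S_d}$, it therefore suffices to show that a smooth del Pezzo surface of degree $d\leqslant 2$ admits no $(-K_{S_d})$-polar cylinder.

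Suppose it does, with boundary divisor $D=\sum_i a_i D_i$, $a_i>0$, $D\qsim -K_{S_d}$. Since $S_d$ is rational, $Z$ is rational, so the projection $U\to Z$ extends to a rational map $S_d\dashrightarrow\overline Z\cong\mathbb{P}^1$; resolving its indeterminacy gives a smooth surface $\widetilde S$, a birational morphism $\pi\colon\widetilde S\to S_d$, and a fibration $g\colon\widetilde S\to\mathbb{P}^1$ extending $U\to Z$ whose general fibre is smooth rational (generic smoothness). The first structural point is that $D$ must have a component $\Sigma$ that is \emph{horizontal} for the induced pencil: for general $z\in Z$ the closure $\Gamma_z$ of $\{z\}\times\mathbb{A}^1$ in $S_d$ meets $\mathrm{Supp}(D)$ in its finitely many points at infinity, and as $z$ varies these sweep out a curve in $\mathrm{Supp}(D)$ dominating $Z$. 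The general $\Gamma_z$ is then a rational curve moving in a pencil, so $\Gamma_z^2\geqslant 0$, and by adjunction together with the ampleness of $-K_{S_d}$ the numerical type of this pencil is very restricted when $d\leqslant 2$: it is either a conic-bundle structure on $S_d$ (general member rational with $-K_{S_d}\cdot\Gamma_z=2$, $\Gamma_z^2=0$) or, for $d=1$, the resolved anticanonical pencil — but the latter has elliptic general member, hence cannot be an $\mathbb{A}^1$-fibration and is excluded at once. So $g$ corresponds to one of the finitely many conic-bundle structures on $S_d$.

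The remaining, and most laborious, step is to extract a numerical contradiction from $D\qsim -K_{S_d}$. Writing $D=a_0\,\pi_*\Sigma+R$ with $R\geqslant 0$ supported on fibre components of the conic bundle, on multisections other than $\Sigma$, and on $\pi$-exceptional curves, one bounds $a_0$ and the coefficients of $R$ by intersecting $D$ with a general fibre $\Gamma_z$ (where $-K_{S_d}\cdot\Gamma_z=2$), then uses $D^2=(-K_{S_d})^2=d\leqslant 2$ together with the fact that $R$ must contain the fibres over the finite set $\overline Z\setminus Z$. The finitely many surviving configurations are excluded using the special geometry of $S_d$: for $d=1$ one invokes the Bertini involution and the classification of conic bundles on a degree-$1$ del Pezzo surface, and uses the base point of $|-K_{S_1}|$; for $d=2$ one uses the Geiser involution (the deck transformation of the double cover $S_2\to\mathbb{P}^2$ branched over a smooth quartic) together with the known lower bounds for multiplicities of effective anticanonical $\mathbb{Q}$-divisors on $S_2$. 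I expect this combinatorial analysis of the conic bundle $g$ and of the components of $D$ to be the principal obstacle; since it is carried out in detail in \cite{KPZ11b} (compare also \cite{KPZ12a}, \cite{KPZ12b}, \cite{Pe11}), one may simply cite that reference, the discussion above recording the route.
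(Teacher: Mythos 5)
Your reduction of the statement to the non-existence of a $(-K_{S_d})$-polar cylinder via the Kishimoto--Prokhorov--Zaidenberg criterion is exactly the paper's framework (Lemma~\ref{theorem:KPZ-criterion}), and deferring the cylinder analysis to the literature is what the paper itself does for this theorem. Note, however, that the reference actually proving it is \cite[Theorem~1.1]{KPZ12b}; \cite{KPZ11b}, which you cite as the primary source, concerns affine cones over Fano threefolds.

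The substantive problem is the structural dichotomy at the heart of your sketch: the claim that the pencil swept out by the closures of the $\mathbb{A}^1$-fibres is either a base-point-free conic bundle or the anticanonical pencil is false, and it inverts which case is hard. Adjunction plus ampleness of $-K_{S_d}$ gives no bound on $-K_{S_d}\cdot\Gamma_z$: if the pencil has a base point then $\Gamma_z^2>0$ and the general member may be a rational curve of any anticanonical degree (already on $S_2$ the preimage of a general pencil of lines of $\mathbb{P}^2$ is a pencil with one base point whose general member is a smooth rational curve with $\Gamma_z^2=1$ and $-K_{S_2}\cdot\Gamma_z=3$). Conversely, the base-point-free case that you treat as the ``principal obstacle'' is dispatched in one line: the general fibre of the cylinder is $\mathbb{A}^1$, so the general $\mathbb{P}^1$-fibre meets $\mathrm{Supp}(D)$ in a single point, the horizontal part of $D$ is a single section $D_r$, and $2=-K_{S_d}\cdot L=D\cdot L=a_r D_r\cdot L$ forces $a_r=2$, contradicting $\alpha(S_d)\geqslant 3/4$ from Theorem~\ref{theorem:GAFA}; this is Lemma~\ref{lemma:KPZ-point}. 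The entire difficulty lies in the base-point case, which your outline discards, and there the argument (in \cite{KPZ12b}, and in this paper's independent route) is not a numerical analysis of fibre components but a local one at the base point $P$: one shows that $(S_d,D)$, and indeed any effective anticanonical $\mathbb{Q}$-divisor supported in $\mathrm{Supp}(D)$, fails to be log canonical at $P$ (Lemma~\ref{lemma:KPZ-cubic}, proved in Appendix~\ref{sec:KPZ}), and then contradicts Theorem~\ref{theorem:technical} for $d\leqslant 2$ (Lemmas~\ref{lemma:dP1} and~\ref{lemma:dP2}). As written, your outline does not close this case.
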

\begin{proof}
See \cite[Theorem~1.1]{KPZ12b}.
\end{proof}
Their proofs make good use of a geometric property called cylindricity, which is worthwhile to be studied for its own sake.
\begin{definition}[\cite{KPZ11a}]
\label{definition:polar-cylinder} Let $M$  be a $\mathbb{Q}$-divisor on  a  smooth projective variety $X$.  An $M$-polar cylinder in $X$ is
an open subset
$$
U=X\setminus \mathrm{Supp}(D)
$$
defined by an effective
$\mathbb{Q}$-divisor $D$ on $X$ with $D\sim_{\mathbb{Q}} M$  such that $U$ is isomorphic to $Z\times\mathbb{A}^1$ for some affine variety $Z$.
\end{definition}
They show that the existence of an $H$-polar cylinder on $X$ is equivalent to the existence of a non-trivial $\mathbb{G}_a$-action on the generalized cone over $(X, H)$.

\begin{lemma}\label{theorem:KPZ-criterion} Let $H$ be an ample Cartier divisor on a smooth projective  variety $X$. Suppose that the generalized  cone $\hat{X}$ over $(X, H)$ is normal. Then the generalized cone $\hat{X}$
admits an effective $\mathbb{G}_{a}$-action if and only if $X$
contains an $H$-polar cylinder.
\end{lemma}
\begin{proof}
See \cite[Corollary~3.2]{KPZ12a}.
\end{proof}
\begin{remark}
If $X$ is a rational surface, then there always exists an ample
Cartier divisor $H$ on $X$ such that $\hat{X}$ is normal and $X$
contains an $H$-polar cylinder (see
\cite[Proposition~3.13]{KPZ11a}), which implies, in particular,
that $\hat{X}$ admits an effective $\mathbb{G}_{a}$-action.
\end{remark}
Indeed, what T.~Kishimoto, Yu.~Prokhorov and M.~Zaidenberg proved for their two theorems is that the del Pezzo surface $S_d$ has 
a $(-K_{S_d})$-polar cylinder if $4\leqslant d\leqslant 9$ but  no  $(-K_{S_d})$-polar cylinder if $d\leqslant 2$.

The main result of the present article is

\begin{theorem}
\label{theorem:main} A smooth cubic surface $S_3$ in $\mathbb{P}^3$  does
not contain any $(-K_{S_3})$-polar cylinders.
\end{theorem}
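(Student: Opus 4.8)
The strategy is to suppose, for contradiction, that $S_3$ contains a $(-K_{S_3})$-polar cylinder $U = S_3 \setminus \mathrm{Supp}(D)$, where $D$ is an effective $\mathbb{Q}$-divisor with $D \sim_{\mathbb{Q}} -K_{S_3}$ and $U \cong Z \times \mathbb{A}^1$. The $\mathbb{A}^1$-fibration on $U$ gives a pencil of rational curves, and its closure produces a linear system (or at least a one-parameter family of curves) on $S_3$ whose general member is rational and whose base locus, together with the ``vertical'' components, is supported on $\mathrm{Supp}(D)$. The philosophy, following the log-geometric approach of Kishimoto--Prokhorov--Zaidenberg, is to convert the existence of the cylinder into a statement about singularities of pairs: roughly, one wants to show that $(S_3, \lambda D)$ (or a closely related boundary) fails to be log canonical for the relevant threshold, i.e. that one can extract from $D$ a divisor over $S_3$ with discrepancy contradicting the fact that $-K_{S_3}$ is ample and $S_3$ is a del Pezzo surface of small degree. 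The key numerical input is that $(-K_{S_3})^2 = 3$, which is small enough to make the relevant $\alpha$-invariant / log canonical threshold estimates bite — this is precisely why the case $d = 3$ is the borderline one.

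**Key steps, in order.** First, I would recall the dichotomy for $\mathbb{A}^1$-fibrations on the open surface $U$: either the fibration is ``twisted'' and extends to a conic bundle / $\mathbb{P}^1$-fibration structure on a suitable blow-up of $S_3$, or the general fibre closure sweeps out $S_3$ as a pencil of rational curves with assigned base points. In either case, run a sequence of blow-ups $\pi\colon \tilde{S} \to S_3$ resolving the base locus and the fibration, and analyze the resulting configuration of $(-1)$- and $(-2)$-curves together with the strict transform of $\mathrm{Supp}(D)$. Second, use the twenty-seven lines: every irreducible curve of negative self-intersection on $S_3$ is a line with $L^2 = -1$ and $L \cdot (-K_{S_3}) = 1$, so the components of $D$ of negative self-intersection are among these lines, and $D \cdot L \geqslant 0$ for any other component forces strong constraints on the coefficients of $D$. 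Third, combine these with the $\mathbb{A}^1$-fibration structure to show that $D$ must contain, with coefficient $\geqslant 1$, some configuration of lines whose complement cannot be a cylinder — typically because that complement has nontrivial $\mathrm{Pic}$ or a non-trivial unit group obstruction, or because the Euler characteristic / logarithmic Kodaira dimension comes out wrong. Concretely, I expect to show $\overline{\kappa}(U) \geqslant 0$ or that $\mathrm{Pic}(U) \otimes \mathbb{Q} \neq 0$ in a way incompatible with $U \cong Z \times \mathbb{A}^1$, using that one cannot remove enough of the $27$ lines (there is no way to delete a divisor $\mathbb{Q}$-linearly equivalent to $-K_{S_3}$ whose support contains a ``large'' subconfiguration of lines, since $-K_{S_3}$ has degree only $3$).

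**The main obstacle.** The hard part will be the case analysis on the fibration: a priori the $\mathbb{A}^1$-fibration on $U$ need not come from a pencil with nice base locus, and the degenerate fibres of the induced $\mathbb{P}^1$-fibration on the resolution can be complicated trees of rational curves, only some of which are contracted by $\pi$. Controlling how the boundary divisor $\mathrm{Supp}(D)$ meets these fibres — in particular ruling out that a single line or a pair of lines could serve as the full boundary of a cylinder — requires a careful bookkeeping of intersection numbers against $-K_{S_3}$ and against the $27$ lines, and it is here that the smoothness of $S_3$ (hence the absence of $(-2)$-curves on $S_3$ itself, and the precise structure of its lines) is essential: on singular cubic surfaces the analogous statement can fail, so any argument that does not use smoothness in an essential way must be wrong. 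A secondary subtlety is matching the $\mathbb{Q}$-linear equivalence $D \sim_{\mathbb{Q}} -K_{S_3}$ with integrality constraints coming from the fact that $\mathrm{Pic}(S_3)$ is torsion-free of rank $7$ with intersection form the $E_6$-lattice extension; I would handle this by passing to the Néron--Severi lattice and running the estimates there, exploiting that $-K_{S_3}$ is not a multiple of any effective class supported on few lines.
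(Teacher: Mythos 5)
Your opening paragraph correctly identifies the Kishimoto--Prokhorov--Zaidenberg philosophy (convert the cylinder into a failure of log canonicity for an anticanonical pair), and this is indeed how the paper begins: the cylinder yields a point $P$ (the base point of the pencil induced by the $\mathbb{A}^1$-fibration) at which $(S_3,D)$ is not log canonical, and in fact at which \emph{every} effective anticanonical $\mathbb{Q}$-divisor supported inside $\mathrm{Supp}(D)$ is not log canonical. However, the concrete steps you then propose would not close the argument, and in one place they aim at a statement that is false.

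The main gap is in your third step. You propose to show that the complement of the relevant configuration of lines ``cannot be a cylinder'' via $\mathrm{Pic}(U)$, unit-group, Euler characteristic, or $\overline{\kappa}(U)\geqslant 0$ obstructions. But cylinders \emph{do} exist on a smooth cubic surface as complements of suitable divisors --- the paper points out that every rational surface carries an $H$-polar cylinder for \emph{some} ample $H$ --- so no purely topological or log-Kodaira obstruction attached to the open set can succeed; what must be exploited is precisely the $\mathbb{Q}$-linear equivalence $D\sim_{\mathbb{Q}}-K_{S_3}$. Moreover the Picard-rank count goes the other way: it forces $\mathrm{Supp}(D)$ to have at least $7$ components, which is a constraint on $D$, not an impossibility. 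The missing idea is the local theorem that carries the whole proof: for every effective anticanonical $\mathbb{Q}$-divisor $D$ on $S_3$ that is not log canonical at a point $P$, the tangent hyperplane section $T_P$ is itself not log canonical at $P$ and $\mathrm{Supp}(D)$ must contain \emph{every} irreducible component of $T_P$. Combined with the convexity perturbation $D_\epsilon=(1+\epsilon)D-\epsilon T_P$, which produces from the cylinder a non-log-canonical anticanonical divisor whose support omits a component of $T_P$, this gives the contradiction. Proving that local theorem is the bulk of the work --- a case analysis on the six possible types of $T_P$, reductions to (possibly nodal) del Pezzo surfaces of degree $2$ via blow-up and contraction, and, in the hardest case where $T_P$ is a triangle of lines, an infinite descent using the Geiser involution to strictly decrease the degree of the residual part of $D$ --- and none of this mechanism, nor any substitute for it, appears in your proposal.
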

Together with Theorems~\ref{theorem:KPZ-actions-4-9} and~\ref{theorem:KPZ-actions-1-2}, this makes us reach  the following conclusion via Lemma~\ref{theorem:KPZ-criterion}.
\begin{corollary}
\label{corollary:main} Let $S_d$ be a smooth del Pezzo surface of degree $d$.  Then \red{the generalized cone over $(S_d,-K_{S_d})$} admits a non-trivial regular
$\mathbb{G}_{a}$-action if and only if $d\geqslant 4$.
\end{corollary}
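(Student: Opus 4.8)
The plan is to deduce the corollary by assembling the three earlier results according to the value of $d$, recalling that a smooth del Pezzo surface always satisfies $1\leqslant d\leqslant 9$. Since $\mathbb{G}_a$ is a connected one-dimensional algebraic group with no non-trivial proper algebraic subgroups in characteristic $0$, every non-trivial $\mathbb{G}_a$-action is automatically effective, so the words ``effective'' and ``non-trivial'' may be used interchangeably throughout. I would then split the equivalence into its two implications.

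For the ``if'' direction, suppose $d\geqslant 4$. Because the degree of a del Pezzo surface never exceeds $9$, this is exactly the range $4\leqslant d\leqslant 9$ treated by Theorem~\ref{theorem:KPZ-actions-4-9}, which already produces an effective, hence non-trivial, $\mathbb{G}_a$-action on $\hat{S}_d$. No further argument is needed in this direction.

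For the ``only if'' direction I would show that $d\leqslant 3$ forces the absence of a non-trivial action. When $d\leqslant 2$ this is precisely the statement of Theorem~\ref{theorem:KPZ-actions-1-2}. The remaining case $d=3$ is where the main theorem enters. First I would check that the cone $\hat{S}_3$ is normal: by the discussion preceding Theorem~\ref{theorem:main} it is the affine hypersurface in $\mathbb{A}^4$ cut out by the cubic defining $S_3\subset\mathbb{P}^3$, so it is Cohen--Macaulay, and its only singularity is the vertex at the origin, which has codimension $3$; Serre's criterion then yields normality. Normality lets me apply Lemma~\ref{theorem:KPZ-criterion} with $X=S_3$ and $H=-K_{S_3}$, so that a non-trivial $\mathbb{G}_a$-action on $\hat{S}_3$ exists if and only if $S_3$ contains a $(-K_{S_3})$-polar cylinder. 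Since Theorem~\ref{theorem:main} asserts that no such cylinder exists, $\hat{S}_3$ admits no non-trivial $\mathbb{G}_a$-action, and the two cases together establish the implication.

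The genuine difficulty of the whole statement is concentrated in Theorem~\ref{theorem:main} (and, for the remaining ranges, in the cited results of Kishimoto--Prokhorov--Zaidenberg); granting those, the corollary is essentially a matter of bookkeeping. The only points that demand any care are the verification that $\hat{S}_3$ is normal, so that the cylinder criterion applies in the critical degree $d=3$, and the observation that in characteristic $0$ effectivity and non-triviality coincide for $\mathbb{G}_a$-actions.
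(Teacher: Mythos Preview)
Your argument is correct and follows the same route the paper intends: the corollary is stated immediately after Theorem~\ref{theorem:main} as a direct consequence of combining it with Theorems~\ref{theorem:KPZ-actions-4-9} and~\ref{theorem:KPZ-actions-1-2}, and you have simply made explicit the step through Lemma~\ref{theorem:KPZ-criterion} (together with the normality check for $\hat{S}_3$) that the paper leaves to the reader.
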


In particular, we here present a long-expected answer to the question raised by H.~Flenner and M.~Zaidenberg.
\begin{corollary}
\label{corollary:FZ} The affine Fermat cubic threefold
$
x^3+y^3+z^3+w^3=0$
in $\mathbb{A}^4$ does not admit a non-trivial regular $\mathbb{G}_{a}$-action.\end{corollary}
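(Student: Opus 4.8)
Since the affine Fermat cubic threefold is, as explained in the introduction, precisely the cone $\hat{S}_3$ over the smooth Fermat cubic surface $S_3$, Corollary~\ref{corollary:FZ} is an immediate consequence of Corollary~\ref{corollary:main}; what really has to be proved is Theorem~\ref{theorem:main}, and here is the plan I would follow. I would argue by contradiction, turning a $(-K_{S_3})$-polar cylinder into a pencil and then into an effective $\mathbb{Q}$-divisor that the geometry of the cubic surface forbids.

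So assume the smooth cubic surface $S=S_3$ has a $(-K_S)$-polar cylinder $U=S\setminus\mathrm{Supp}(D)$, with $D\sim_{\mathbb{Q}}-K_S$ effective and $U\cong Z\times\mathbb{A}^1$ for an affine curve $Z$ (which is rational, because $S$ is). The closures $\overline{\ell_t}$ of the $\mathbb{A}^1$-fibres $\ell_t$ ($t\in Z$) form a pencil on $S$; I would resolve it to get a birational morphism $\mu\colon W\to S$ which is an isomorphism over $U$ (its centres lie in $S\setminus U$) together with a morphism $\pi\colon W\to\mathbb{P}^1$. The point of passing to $W$ is that $\mu^{-1}(U)\cong Z\times\mathbb{A}^1$ lies over $Z$, so a general fibre $f$ of $\pi$ is a smooth rational curve meeting $\mu^{-1}(U)$ in an affine line, hence meeting the boundary $W\setminus\mu^{-1}(U)$ in exactly one point. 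This forces the boundary to have a unique $\pi$-horizontal component $B_W$, necessarily a section of $\pi$, everything else being vertical; moreover $B_W$ is not $\mu$-exceptional (otherwise all the curves $\mu(f)$ would pass through the single point $\mu(B_W)$ and $U$ would be $S$ minus a point, which is not affine), and hence every $\mu$-exceptional curve is $\pi$-vertical. Writing $F_t:=\mu(f)$, the $F_t$ are then an infinite family of pairwise disjoint irreducible rational curves on $S$, so $F_t$ is not one of the $27$ lines, and therefore $-K_S\cdot F_t\geqslant 2$.

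Next I would push everything down to $S$. Decompose $D=D^{\mathrm{hor}}+D^{\mathrm{vert}}$ into its horizontal and vertical parts with respect to the pencil (both effective, supported on $S\setminus U$). Each horizontal component of $D$ must pass, for every general $t$, through the single point $F_t\setminus U$; as $t$ varies these points sweep out the irreducible curve $B:=\mu(B_W)$, so every horizontal component contains $B$ and hence equals it, giving $D^{\mathrm{hor}}=aB$ for one curve $B$ and a coefficient $a>0$. Using the projection formula together with the fact that the $\mu$-exceptional curves are $\pi$-vertical, $B\cdot F_t=B_W\cdot f=1$, so
\[
a=a(B\cdot F_t)=D^{\mathrm{hor}}\cdot F_t=D\cdot F_t=-K_S\cdot F_t\geqslant 2 ,
\]
and therefore
\[
-K_S-2B\ \sim_{\mathbb{Q}}\ (a-2)B+D^{\mathrm{vert}}\ \geqslant\ 0 ;
\]
that is, $-K_S-2B$ is an effective $\mathbb{Q}$-divisor, which is the contradiction I would now aim for.

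To conclude, I would show $-K_S-2B$ cannot even be pseudoeffective. If $-K_S\cdot B\geqslant 2$, then $(-K_S-2B)\cdot(-K_S)=3-2(-K_S\cdot B)<0$, impossible since $-K_S$ is ample. If $-K_S\cdot B=1$, then $B$ is a line on $S$, so $B^2=-1$ and $(-K_S-2B)^2=3-4+4(-1)=-5$; but every pseudoeffective class on a smooth cubic surface is a non-negative combination $\sum c_i\ell_i$ of the $27$ lines, and pairing with $-K_S$ forces $\sum c_i=(-K_S-2B)\cdot(-K_S)=1$, whence $(-K_S-2B)^2\geqslant-\sum c_i^2\geqslant-(\sum c_i)^2=-1$, contradicting $-5$. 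Either way no cylinder can exist, proving Theorem~\ref{theorem:main} and hence Corollary~\ref{corollary:FZ}. The step I expect to be the genuine obstacle is the identification of the horizontal part of the boundary as a single curve meeting the general fibre exactly once: this is where one must control singular general fibres, rule out horizontal exceptional curves, and dispose of the degenerate configuration in which all $\mathbb{A}^1$-fibres emanate from one point — carrying all of this out on the resolution $W$, where a general fibre is a $\mathbb{P}^1$ meeting the boundary in a single point, is what keeps it manageable.
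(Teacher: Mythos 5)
Your reduction of Corollary~\ref{corollary:FZ} to Theorem~\ref{theorem:main} (via Corollary~\ref{corollary:main} and Lemma~\ref{theorem:KPZ-criterion}) is exactly the paper's route, so the real question is your proposed proof of Theorem~\ref{theorem:main}, which is entirely different from the paper's and contains a fatal gap: the claim that the unique horizontal boundary component $B_W$ is not $\mu$-exceptional. Your parenthetical justification does not work: if $B_W$ is exceptional, the \emph{vertical} boundary components still map to curves on $S$, so $U$ is $S$ minus those curves (plus possibly a point), which can perfectly well be affine. The standard example is $\mathbb{P}^2$ with $U=\mathbb{P}^2\setminus L\cong\mathbb{A}^2$, $D=3L$, and the ruling by lines through a fixed point $P\in L$: here the pencil has a base point, the horizontal component on the resolution is an exceptional section over $P$, and the boundary is the single vertical member $L$. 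In fact, for del Pezzo surfaces of degree at most $4$ the paper proves the \emph{opposite} of your claim (Lemma~\ref{lemma:KPZ-point}): the pencil always has a base point, precisely because a base-point-free pencil would force a horizontal component of $D$ with coefficient $2$ and hence $\alpha(S)\leqslant 1/2$, contradicting Theorem~\ref{theorem:GAFA}. A further sanity check: were your claim true, your argument would produce infinitely many pairwise disjoint curves of self-intersection $0$ on $S$ whenever a cylinder exists, and the same reasoning applied to $\mathbb{P}^2$ (where every curve has positive self-intersection) would rule out anticanonical cylinders there, contradicting Theorem~\ref{theorem:KPZ-actions-4-9}.

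Once the pencil has a base point $P$, everything downstream of your claim collapses: the curves $F_t$ all pass through $P$ and are not pairwise disjoint; the vertical components of $D$ may meet $F_t$ at $P$, so $D^{\mathrm{hor}}\cdot F_t\ne D\cdot F_t$ in general; and there need be no horizontal component of $D$ at all ($D^{\mathrm{hor}}=0$ in the example above), so the divisor $-K_S-2B$ never materialises. The base-point case is not a degenerate configuration to be disposed of --- it is the \emph{only} case, and it is where all the work of the paper lives: Lemma~\ref{lemma:KPZ-special} shows $(S,D)$ fails to be log canonical at $P$, Lemma~\ref{lemma:KPZ-cubic} upgrades this to an effective anticanonical $\mathbb{Q}$-divisor whose support misses a component of $T_P$, and Theorem~\ref{theorem:technical} (the case analysis of Sections~\ref{sec:cubic-surfaces} and~\ref{sec:three-lines}, including the Geiser-involution descent) shows no such divisor exists on a cubic surface. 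Your intersection-theoretic endgame has no substitute for that local analysis at $P$.
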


The following lemma shows that having  anticanonical cylinders on del Pezzo surfaces is strongly related to the log canonical thresholds of their effective anticanonical $\mathbb{Q}$-divisors\footnote{An anticanonical $\mathbb{Q}$-divisor on a variety $X$ is a  $\mathbb{Q}$-divisor $\mathbb{Q}$-linearly equivalent to an anticanonical divisor of $X$, meanwhile, an effective  anticanonical divisor on $X$  is a member of the anticanonical linear system $|-K_X|$. }. It may also be one example that 
shows how important it is to study
 singularities of effective anticanonical $\mathbb{Q}$-divisors on  Fano manifolds.  Indeed, the proof of 
 Theorem~\ref{theorem:main} is substantially based on the lemma below.

\begin{lemma}
\label{lemma:KPZ-cubic} Let $S_d$ be a smooth del Pezzo surface of degree  $d\leqslant 4$. Suppose that $S_d$ contains a
$(-K_{S_d})$-polar cylinder, i.e., there is an open affine subset $U\subset S_d$ and an effective anticanonical $\mathbb{Q}$-divisor $D$ such that $U=S_d\setminus \mathrm{Supp}(D)$ and $U\cong Z\times\mathbb{A}^1$ for some smooth rational affine curve $Z$.
Then there exists a point $P$ on $S_d$ such that
\begin{itemize}
\item the log pair $(S_d, D)$ is not log canonical at the point $P$;
\item if there exists a unique divisor $T$ in the anticanonical  linear system $|-K_{S_d}|$ such that the log pair $(S_d, T)$ is not log canonical at  $P$, then there is an effective anticanonical $\mathbb{Q}$-divisor $D'$ on the surface $S_d$ such that 
\begin{itemize}
\item the log pair $(S_d, D')$ is not log canonical at $P$;
\item the support of the divisor $T$ is not contained in the support of $D'$.
\end{itemize}
\end{itemize}
\end{lemma}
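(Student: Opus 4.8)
The plan is to extract from the cylinder a pencil of rational curves on $S_d$ and to locate the non--log--canonical point by watching how a general member of that pencil meets the anticanonical boundary $D$; the first bullet is essentially the fact (going back to Kishimoto, Prokhorov and Zaidenberg) that a $(-K)$--polar cylinder forces its boundary to be non--log--canonical, and the second bullet will be a perturbation of $D$. First I would build the pencil. Write $U\cong Z\times\mathbb A^1$ with projection $\pi\colon U\to Z$; since $Z$ is a smooth rational affine curve, its smooth completion is $\mathbb P^1$, so $\pi$ extends to $\phi\colon S_d\dashrightarrow\mathbb P^1$, and resolving indeterminacy by $g\colon W\to S_d$ gives a morphism $\bar\phi\colon W\to\mathbb P^1$ whose general fibre is a smooth rational curve. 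Let $\mathcal M$ be the pencil on $S_d$ spanned by the pushed--forward fibres and $M$ a general member. Because distinct fibres of $\pi$ are disjoint in $U$ and $\phi(U)\subseteq Z$, the base locus of $\mathcal M$, every component of every reducible member of $\mathcal M$, and the $g$--image of every $g$--exceptional curve all lie in $\mathrm{Supp}(D)$. Hence $M\not\subseteq\mathrm{Supp}(D)$ and $M\cap U\cong\mathbb A^1$, so, $\mathbb P^1\setminus\mathbb A^1$ being a single point, $M$ meets $\mathrm{Supp}(D)$ in exactly one point $Q_M$; adjunction on a general fibre of $\bar\phi$ gives $M\cdot(-K_{S_d})\geqslant 2$, with equality exactly when $\mathcal M$ is base--point free; and as $D\qsim -K_{S_d}$ with $M\not\subseteq\mathrm{Supp}(D)$ we get $(M\cdot D)=(M\cdot D)_{Q_M}=M\cdot(-K_{S_d})$. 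Finally, pushing the linear equivalence of two distinct fibres of $\bar\phi$ down to $S_d$ shows that every reducible member $M_0=\sum_j m_jC_j$ of $\mathcal M$ satisfies $M_0\sim M$ and has every $C_j\subseteq\mathrm{Supp}(D)$.

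For the first bullet, suppose $(S_d,D)$ were log canonical. I would first exclude that $\mathcal M$ is base--point free: then $Q_M$ would move along an irreducible curve $B\subseteq\mathrm{Supp}(D)$ which, since the $\bar F_t$ are disjoint, is a section of $\bar\phi$, so $M\cdot B=1$; but then the contribution $d_B\cdot(M\cdot B)_{Q_M}=d_B\leqslant 1$ to $(M\cdot D)_{Q_M}=M\cdot(-K_{S_d})=2$ is too small, forcing $M$ to meet at the moving point $Q_M$ a further fixed component of $\mathrm{Supp}(D)$ --- which would then contain all of $B$, absurd. Hence $\mathcal M$ has a base point on $S_d$, and since every member passes through it while meeting $\mathrm{Supp}(D)$ only in $Q_M$, that base point equals $Q_M$, independent of $M$; call it $P\in\mathrm{Supp}(D)$, so $(M\cdot D)_P=M\cdot(-K_{S_d})\geqslant 3$. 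Now $\mult_P(M)\cdot\mult_P(D)\leqslant(M\cdot D)_P$, and the reducible members of $\mathcal M$ --- all supported on $\mathrm{Supp}(D)$, and of only finitely many types since $S_d$ has degree $d\leqslant 4$ and the numerical invariants of $M$ are small --- together with the constraint $D\cdot(-K_{S_d})=d\leqslant 4$ severely restrict the configuration of $\mathrm{Supp}(D)$ near $P$; in each remaining case a discrepancy computation over $P$ (possibly at an infinitely near point) contradicts log canonicity, and this yields the point $P$ of the first bullet.

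For the second bullet, if $\mathrm{Supp}(D)$ already omits a component of $\mathrm{Supp}(T)$ we take $D'=D$. Otherwise $\mathrm{Supp}(D)\supseteq\mathrm{Supp}(T)$, and for $\epsilon\in[0,1)$ I put $D_\epsilon=\tfrac1{1-\epsilon}(D-\epsilon T)\qsim -K_{S_d}$, which is effective for $\epsilon\leqslant d_{\min}:=\min\{\mult_{T_i}(D):T_i\text{ a component of }T\}$. Since being non--log--canonical at $P$ is a closed condition in $\epsilon$ and holds at $\epsilon=0$ via a divisor over $P$ that, by continuity of discrepancies, still witnesses it for small $\epsilon>0$, the largest $\epsilon_0\leqslant d_{\min}$ with $(S_d,D_{\epsilon_0})$ not log canonical at $P$ is positive and attained. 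If $\epsilon_0=d_{\min}$ then $D':=D_{\epsilon_0}$ does the job: it is effective, $\qsim -K_{S_d}$, not log canonical at $P$, and its support omits the component(s) of $T$ with $\mult_{T_i}(D)=d_{\min}$. If $\epsilon_0<d_{\min}$, then $(S_d,D_{\epsilon_0})$ is non--log--canonical at $P$ but becomes log canonical there for larger $\epsilon$; here I would rerun the threshold argument using a reducible member $M_0\sim M$ of $\mathcal M$ --- chosen over a point of $\mathbb P^1$ whose fibre avoids $P$, so that $P\notin\mathrm{Supp}(M_0)$, and with $M$ general so $P\notin M$ --- in the role of $T$: since $D-\delta M_0+\delta M\qsim D$ and differs from $D$ only away from $P$, increasing $\delta$ to the value that first kills a component of $M_0$ produces an effective $D'\qsim -K_{S_d}$ still not log canonical at $P$, and the uniqueness of $T$ is what forces the killed component, after finitely many such steps, to be a component of $\mathrm{Supp}(T)$.

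The hard part is twofold. The non--trivial half of the first bullet is the case analysis compressed into the last sentence of the second paragraph: over a del Pezzo surface of degree $d\leqslant 4$ one must classify the configurations of reducible members of $\mathcal M$ that can sit inside an effective anticanonical divisor and verify that each forces $(S_d,D)$ to be non--log--canonical at the base point of $\mathcal M$. The second difficulty is the case $\epsilon_0<d_{\min}$ of the second bullet: showing that the auxiliary perturbation along the pencil genuinely removes a component of $\mathrm{Supp}(T)$ rather than merely reshuffling $\mathrm{Supp}(D)$ --- and this is exactly the point where the hypothesis that $T$ is the \emph{unique} divisor in $|-K_{S_d}|$ that is non--log--canonical at $P$ must be used.
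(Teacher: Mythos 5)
Your construction of the pencil $\mathcal M$, the observation that every component of $\mathrm{Supp}(D)$ and every exceptional curve of the resolution except one section lies in a fibre of the induced fibration, and your exclusion of the base-point-free case are all consistent with the paper (the paper's exclusion is unconditional: a section component of $D$ would have coefficient $a_r=2$, giving $\alpha(S_d)\leqslant\frac12$ and contradicting Theorem~\ref{theorem:GAFA} for $d\leqslant 4$ --- this is precisely where the hypothesis $d\leqslant 4$ enters). But the core of the first bullet is missing from your proposal: you reduce it to ``a case analysis \dots a discrepancy computation over $P$ (possibly at an infinitely near point)'' that you never carry out and yourself flag as the hard part, and the inequality $\mathrm{mult}_P(M)\,\mathrm{mult}_P(D)\leqslant (M\cdot D)_P$ bounds $\mathrm{mult}_P(D)$ from above, which points the wrong way for proving non-log-canonicity. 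The paper needs no case analysis at all. Its key lemma states: for \emph{any} effective anticanonical $\mathbb{Q}$-divisor $H$ with $\mathrm{Supp}(H)\subseteq\mathrm{Supp}(D)$, writing $K_W+\tilde H=f^{*}(K_{S_d}+H)+\sum_{i=1}^{n}\delta_iE_i$ on the resolution $f\colon W\to S_d$ and intersecting with a general fibre $L$ of $g\colon W\to\mathbb{P}^1$ gives $\delta_n=K_W\cdot L=-2<-1$, because $\tilde H$ and all $E_i$ with $i<n$ lie in fibres while $E_n\cdot L=1$. Hence $(S_d,H)$ is not log canonical at the base point $P$. Applied to $H=D$ this settles the first bullet in one line.

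The absence of this lemma also creates a genuine error in your second bullet. You assert that the largest $\epsilon_0$ with $(S_d,D_{\epsilon_0})$ not log canonical at $P$ ``is positive and attained''; but each discrepancy is a continuous function of $\epsilon$, so the set of $\epsilon$ where the pair is log canonical is closed, the non-log-canonical locus is open, and its supremum need not be attained --- your dichotomy collapses, and the fallback (``rerun the threshold argument using a reducible member $M_0$'') is too vague to repair it and has no visible termination. The paper sidesteps all of this: by Lemma~\ref{lemma:KPZ-r} the divisor $D$ has at least $6$ components whereas $T$ has at most $K_{S_d}^2\leqslant 4$, so $T\ne D$; choosing the largest $\mu$ with $D'=(1+\mu)D-\mu T$ effective removes a component of $T$ from the support while keeping $\mathrm{Supp}(D')\subseteq\mathrm{Supp}(D)$, and then $(S_d,D')$ is not log canonical at $P$ \emph{automatically} by the key lemma above --- no convexity, threshold, or continuity argument is needed at that stage, and this is exactly what makes the uniqueness hypothesis on $T$ dispensable inside the proof itself (it is only needed downstream, when Lemma~\ref{lemma:convexity} is applied to the output).
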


\begin{proof}
\red{This follows from \cite[Lemma~4.11]{KPZ11a} and the proof of
\cite[Lemma~4.14]{KPZ11a} (cf. the proof of
\cite[Lemma~5.3]{KPZ12b}). Since the proof is presented dispersedly in \cite{KPZ11a} or \cite{KPZ12b}, for the convenience of the readers, we
give a detailed and streamlined proof in Appendix~\ref{sec:KPZ}.}
\end{proof}

Applying Lemma~\ref{lemma:convexity} \red{below}, we easily obtain

\begin{corollary}
\label{corollary:KPZ-cubic}  Let $S_3$ be a smooth del Pezzo surface of degree  $3$. Suppose that $S_3$ contains a
$(-K_{S_3})$-polar cylinder. Then there is an effective anticanonical 
$\mathbb{Q}$-divisor $D$ on $S_3$ such that
\begin{itemize}
\item the log pair $(S_3,D)$ is not log
canonical at some point $P$ on  $S_3$;
\item the support of $D$ does not
contain at least one irreducible component of the tangent hyperplane section $T_P$ of $S_3$ at the point $P$.
\end{itemize}
\end{corollary}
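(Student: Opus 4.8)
The plan is to combine Lemma~\ref{lemma:KPZ-cubic} in the case $d=3$ with one elementary observation: among all members of the anticanonical linear system $|-K_{S_3}|$, the tangent hyperplane section $T_P$ is the only one that can possibly fail to be log canonical at $P$. Indeed, every member of $|-K_{S_3}|$ is a hyperplane section $C=H\cap S_3$, and such a section is reduced (a smooth cubic surface admits no plane tangent to it along a line). If $P\in C$, then the Zariski tangent space of $C$ at the smooth point $P\in S_3$ equals $H\cap T_PS_3$, which is a line unless $H=T_PS_3$; hence $C$ is smooth at $P$ unless $C=T_P$. A reduced curve in $|-K_{S_3}|$ that is smooth at $P$, or --- when $C=T_P$ --- has at worst an ordinary node at $P$, gives a log pair $(S_3,C)$ that is log canonical at $P$. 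Consequently the hypothesis ``there exists a unique $T\in|-K_{S_3}|$ such that $(S_3,T)$ is not log canonical at $P$'' is equivalent to ``$(S_3,T_P)$ is not log canonical at $P$'', and whenever it holds that unique divisor is $T=T_P$.

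Granting this, I would argue as follows. Suppose $S_3$ contains a $(-K_{S_3})$-polar cylinder and apply Lemma~\ref{lemma:KPZ-cubic} with $d=3$: it yields a point $P$ and an effective anticanonical $\mathbb{Q}$-divisor $\Delta$ with $(S_3,\Delta)$ not log canonical at $P$. If $(S_3,T_P)$ is not log canonical at $P$, then by the observation above the hypothesis of the second item of Lemma~\ref{lemma:KPZ-cubic} holds with that unique divisor being $T=T_P$, and the item then produces an effective anticanonical $\mathbb{Q}$-divisor $D$ with $(S_3,D)$ not log canonical at $P$ whose support omits an irreducible component of $T=T_P$ --- precisely the assertion of the corollary. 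If instead $(S_3,T_P)$ is log canonical at $P$, I would first try $D=\Delta$: if $\mathrm{Supp}(\Delta)$ already misses some component of $T_P$ we are done. Otherwise every component of the reduced curve $T_P$ occurs in $\Delta$ with positive coefficient; let $\epsilon>0$ be the smallest such coefficient. Then $\epsilon<1$, for otherwise $\Delta-T_P$ would be an effective divisor $\sim_{\mathbb{Q}}0$, hence zero, forcing $\Delta=T_P$ and contradicting that $\Delta$ is not log canonical at $P$ while $T_P$ is. Now put
$$
D=\frac{1}{1-\epsilon}\left(\Delta-\epsilon T_P\right),
$$
an effective anticanonical $\mathbb{Q}$-divisor whose support omits the component of $T_P$ realising the minimum $\epsilon$. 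Since $\Delta=\epsilon T_P+(1-\epsilon)D$ exhibits $\Delta$ as a convex combination of $T_P$ and $D$, and $(S_3,T_P)$ is log canonical at $P$ while $(S_3,\Delta)$ is not, Lemma~\ref{lemma:convexity} (convexity of the ``log canonical at $P$'' property) forces $(S_3,D)$ to be not log canonical at $P$, and we are done.

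I do not expect any step here to be a serious obstacle --- the corollary is designed to be routine once Lemma~\ref{lemma:KPZ-cubic} is available. The only non-formal ingredient is the observation of the first paragraph, which reduces to the behaviour of Zariski tangent spaces under intersection with a hyperplane together with the log canonicity of an ordinary node on a smooth surface; everything after that is the standard ``subtract off the common part'' bookkeeping supplied by Lemma~\ref{lemma:convexity}, exactly as in the related arguments of \cite{KPZ11a}.
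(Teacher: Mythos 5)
Your proposal is correct and is precisely the argument the paper intends: the paper derives the corollary in one line from Lemma~\ref{lemma:KPZ-cubic} together with Lemma~\ref{lemma:convexity}, and your two cases (invoking the second item of Lemma~\ref{lemma:KPZ-cubic} when $(S_3,T_P)$ is not log canonical at $P$, and subtracting $\epsilon T_P$ via the convexity lemma when it is) are exactly that derivation, with the correct supporting observation that $T_P$ is the only member of $|-K_{S_3}|$ that can fail to be log canonical at $P$.
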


In order to prove Theorem~\ref{theorem:main}, it suffices to show that there is no such a divisor $D$ described in 
Corollary~\ref{corollary:KPZ-cubic} on a smooth del Pezzo surface of degree $3$. 
In this article, this will be done in a bit wider setting. To be precise, we prove 

\begin{theorem}
\label{theorem:technical} Let $S_d$ be a smooth del Pezzo surface of degree $d\leqslant 3$ and let
$D$ be an effective anticanonical $\mathbb{Q}$-divisor on $S_d$. Suppose that
the log pair $(S_d,D)$ is not log
canonical at a point $P$.
Then there exists a unique divisor $T$ in the anticanonical linear system
$|-K_{S_d}|$ such that the log pair $(S_d,T)$ is not log canonical at the point $P$. Moreover,
the support of $D$ contains all the irreducible components of
$\mathrm{Supp}(T)$.
\end{theorem}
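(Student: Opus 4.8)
The plan is to prove Theorem~\ref{theorem:technical} by a careful case analysis driven by the geometry of lines and conics on $S_d$, combined with standard multiplicity/intersection estimates for log canonical singularities. First I would set up the basic inequalities: if $(S_d, D)$ is not log canonical at $P$, then $\mult_P(D) > 1$, and more generally for any effective divisor $B \qsim -K_{S_d}$ whose support shares no common component with $D$ one has $D \cdot B \geqslant \mult_P(D) \cdot \mult_P(B)$ together with $(-K_{S_d})^2 = d$. Since $d \leqslant 3$ is small, these numerical constraints are extremely restrictive. The key classical input is that through a general point $P$ of a cubic surface the tangent hyperplane section $T_P$ is an irreducible cuspidal or nodal cubic curve, while through special points it degenerates to a union of lines (an ``Eckardt'' configuration, or a line plus a conic); analogous descriptions hold for $d = 1, 2$ in the weighted setting.

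The main steps, in order, would be: (1) Using the restriction of $D$ to a general member of $|-K_{S_d}|$ through $P$ and adjunction/inversion of adjunction, deduce that $\mult_P(D)$ cannot be too large, and locate $P$ relative to the $(-1)$-curves. (2) Show \emph{existence} of a divisor $T \in |-K_{S_d}|$ non-log-canonical at $P$: this is where one produces $T$ as a suitable (possibly reducible) anticanonical curve singular at $P$ — for $d = 3$, the tangent hyperplane section $T_P$ itself, which is automatically singular at $P$ and hence $(S_3, T_P)$ is not log canonical at $P$ only when $T_P$ is worse than nodal, so one must instead argue that \emph{some} member of $|-K_{S_3}|$ is non-lc at $P$ by a dimension count on the linear system of anticanonical curves with an imposed singularity, using that $\dim |-K_{S_d}| = d$. (3) Prove \emph{uniqueness}: suppose $T_1 \neq T_2$ are both non-lc at $P$; intersect them, using $T_1 \cdot T_2 = d$ and the fact that each has multiplicity $\geqslant 2$ at $P$ (since a curve with a non-lc point on a smooth surface is singular there with multiplicity $\geqslant 2$, or is worse), to force $4 \leqslant T_1 \cdot T_2 = d \leqslant 3$, a contradiction — unless $T_1, T_2$ share components, which is then handled by analysing the shared $(-1)$-curves. (4) Prove the \emph{containment} statement: if $D \qsim -K_{S_d}$ is non-lc at $P$ but $\mathrm{Supp}(D)$ omits some component $L$ of $\mathrm{Supp}(T)$, then $D \cdot L \geqslant \mult_P(D)\mult_P(L) > 1 \cdot 1$, while $-K_{S_d} \cdot L$ is small (equal to $1$ if $L$ is a line), giving a contradiction; repeating over all components of $T$ forces $\mathrm{Supp}(D) \supseteq \mathrm{Supp}(T)$.

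The hard part will be step (2)–(3) in the degenerate configurations, where $T_P$ splits as a union of lines (e.g.\ three coplanar lines through an Eckardt point, or a line plus a tangent conic) and the single curve $T$ one wants may be reducible with components that are themselves $(-1)$-curves. In those cases the clean intersection-number contradiction in step (3) can fail because $T_1$ and $T_2$ might share a line component, and one must instead track multiplicities of $D$ along individual lines and use the combinatorics of the $(-1)$-curve configuration (how many lines pass through $P$, and their mutual intersections) to pin down a unique $T$. I expect the bulk of the real work — and the part that forces the separate treatment of $d = 1, 2, 3$ rather than a uniform argument — to be exactly this bookkeeping over the finitely many ``bad'' points $P$ lying on intersections of $(-1)$-curves. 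The cases $d \leqslant 2$ overlap with the analysis already carried out in \cite{KPZ12b}, so the novel content is concentrated in $d = 3$, where the $27$ lines and their triple points (Eckardt points) must be enumerated and each ruled out or shown to carry a unique non-lc anticanonical $T$.
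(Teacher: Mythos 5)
There is a genuine gap, and it sits exactly where the real content of the theorem lies. Your step (2) proposes to \emph{produce} a non-log-canonical integral divisor $T\in|-K_{S_3}|$ at the given point $P$ by a dimension count on anticanonical curves with an imposed singularity. No such $T$ exists at a general point: for generic $P$ the tangent hyperplane section $T_P$ is the unique anticanonical curve singular at $P$ and it has only a node there, so $(S_3,T_P)$ \emph{is} log canonical at $P$, and so is every other member of $|-K_{S_3}|$. The theorem is therefore not saying ``some non-lc $T$ can always be found''; it is saying that the hypothesis --- the existence of a non-lc effective anticanonical $\mathbb{Q}$-divisor $D$ at $P$ --- \emph{forces} $P$ to be one of the special points where $T_P$ is worse than nodal (an Eckardt point, a tacnodal point, or a cusp of $T_P$). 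The burden of proof is thus to show that at the ``good'' points (where $T_P$ is a nodal cubic, a triangle of lines not concurrent at $P$, or a line plus a transversal conic) no non-lc $D$ can exist at all. This is the hardest part of the paper and your proposal contains no mechanism for it: the paper handles it by blowing up $P$, contracting $(-2)$-curves to land on a degree-two del Pezzo surface with ordinary double points and exploiting its anticanonical double cover, and --- in the triangle case --- by an infinite descent using the Geiser involution centred at $P$, which strictly lowers the degree of the residual part of $D$ at each step. None of this is replaceable by intersection-number bookkeeping on the $27$ lines.

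Your step (4) is also too weak as stated. The bound $D\cdot L\geqslant\mathrm{mult}_P(D)>1$ against $-K\cdot L=1$ only rules out omitting a \emph{line through $P$} from $\mathrm{Supp}(D)$. When the omitted component is the conic in a tacnodal $T_P$ one gets only $2=D\cdot C\geqslant\mathrm{mult}_P(D)$, and when $T_P$ is an irreducible cuspidal cubic one gets only $\mathrm{mult}_P(D)\leqslant 3/2$; neither is a contradiction, and the paper again needs the blow-up of $P$ plus the degree-two double cover analysis (and, for the line-plus-transversal-conic case, a contraction to another cubic surface followed by an appeal to the triangle lemma). Your uniqueness argument via $T_1\cdot T_2=d\leqslant 3$ is fine but addresses the easy part; uniqueness is immediate because an anticanonical curve singular at $P$ is cut out by the tangent hyperplane. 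In short, the proposal correctly identifies the numerology but is missing the two ideas that make the proof work: the reduction to singular degree-two del Pezzo surfaces via blow-up and contraction, and the Geiser-involution descent.
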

\begin{corollary}
\label{corollary:technical} Let $S_3$ be  a smooth cubic surface in
$\mathbb{P}^3$ and let
$D$ be an effective anticanonical $\mathbb{Q}$-divisor on $S_3$. Suppose  that
 the log pair $(S_3,D)$ is not log
canonical at a point $P$.
Then for the tangent hyperplane section $T_P$ at the point $P$, the log pair
$(S_3,T_P)$ is not log canonical at $P$
and $\mathrm{Supp}(D)$ contains all the irreducible components of
$\mathrm{Supp}(T_P)$.
\end{corollary}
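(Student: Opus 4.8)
The plan is to derive Corollary~\ref{corollary:technical} directly from Theorem~\ref{theorem:technical}, the only extra ingredient being the elementary observation that an anticanonical divisor of $S_3$ that is singular at a point $P$ must be the tangent hyperplane section of $S_3$ at $P$.

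First I would apply Theorem~\ref{theorem:technical} with $d=3$. This is legitimate, since a smooth cubic surface $S_3\subset\mathbb{P}^3$ is a smooth del Pezzo surface of degree $3$ whose anticanonical class is the restriction of the hyperplane class of $\mathbb{P}^3$, so that $|-K_{S_3}|$ is precisely the linear system of hyperplane sections. The theorem then furnishes a \emph{unique} divisor $T\in|-K_{S_3}|$ for which the log pair $(S_3,T)$ is not log canonical at $P$, and asserts that $\mathrm{Supp}(D)$ contains every irreducible component of $\mathrm{Supp}(T)$. So the whole problem reduces to showing $T=T_P$.

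To do this, write $T=S_3\cap\Pi$ for a plane $\Pi\subset\mathbb{P}^3$. Since $(S_3,T)$ is not log canonical at $P$, the point $P$ must lie on $\mathrm{Supp}(T)$, hence $P\in\Pi$; moreover $\mult_P(T)\geqslant 2$, for otherwise $T$ would be reduced and smooth at $P$, the pair $(S_3,T)$ would be log smooth at $P$, hence log canonical there, a contradiction. I then use the classical fact that, for the smooth surface $S_3\subset\mathbb{P}^3$, a hyperplane section passing through $P$ is singular at $P$ precisely when the hyperplane is the tangent plane to $S_3$ at $P$; indeed, any other plane through $P$ meets $S_3$ transversally at $P$ and so cuts out a curve smooth at $P$ (here one also uses that the smooth cubic $S_3$ contains no plane, so that $S_3\cap\Pi$ is genuinely a curve). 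Consequently $\Pi$ is the tangent plane to $S_3$ at $P$ and $T=S_3\cap\Pi=T_P$. This yields both conclusions of the corollary simultaneously: the log pair $(S_3,T_P)$ is not log canonical at $P$, and $\mathrm{Supp}(D)$ contains all the irreducible components of $\mathrm{Supp}(T_P)$.

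I do not expect any genuine obstacle, as the substance of the statement is entirely contained in Theorem~\ref{theorem:technical}; the one point requiring slight care is that the reduction to $\mult_P(T)\geqslant 2$ must be valid for an arbitrary, possibly reducible or non-reduced, member of $|-K_{S_3}|$ --- which it is, since $\mult_P(T)\leqslant 1$ forces $T$ to consist of a single reduced branch smooth at the point $P$.
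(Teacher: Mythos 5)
Your proposal is correct and follows the same route the paper intends: Corollary~\ref{corollary:technical} is deduced directly from Theorem~\ref{theorem:technical} by identifying the unique non--log-canonical member $T$ of $|-K_{S_3}|$ with the tangent hyperplane section $T_P$, using that $(S_3,T)$ not log canonical at $P$ forces $T$ to be singular at $P$ and that $T_P$ is the unique hyperplane section singular at $P$ (a fact the paper records explicitly in Section~\ref{sec:cubic-surfaces}). Your extra care about reducedness and the case $\mult_P(T)\leqslant 1$ is sound and consistent with the paper's observation that hyperplane sections of a smooth cubic are always reduced.
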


Note that Corollary~\ref{corollary:technical} contradicts 
the conclusion of Corollary~\ref{corollary:KPZ-cubic}. \red{This} simply
means that the hypothesis of Corollary~\ref{corollary:KPZ-cubic}
fails to be true. This shows that Theorem~\ref{theorem:technical}
implies Theorem~\ref{theorem:main}. Moreover, we see that
Theorem~\ref{theorem:technical} recovers 
Theorem~\ref{theorem:KPZ-actions-1-2} through Lemma~\ref{lemma:KPZ-cubic} as well.

\begin{remark}
\label{remark:del-Pezzo-high-degree}  The condition $d \leqslant 3$ is 
crucial in Theorem~\ref{theorem:technical}. Indeed,
if $d\geqslant 4$, then  the assertion of
Theorem~\ref{theorem:technical} is no longer true (\red{see}
the proof of \cite[Theorem~3.19]{KPZ11a}). For example, consider the case when $d=4$.
There exists a birational
morphism $f\colon S_4\to\mathbb{P}^2$ such that $f$ is the blow up of $\mathbb{P}^2$  at
five points that lie on a unique irreducible conic. Denote this conic by $C$.
Let $\tilde{C}$ be the proper transform of the conic $C$ on the
surface $S_4$ and let $E_1, \ldots, E_5$ be the exceptional
divisors of the morphism $f$. Put
$$
D=\frac{3}{2}\tilde{C}+\sum_{i=1}^{5}\frac{1}{2}E_i.
$$
It is an effective anticanonical  $\mathbb{Q}$-divisor on $S_4$ and the log pair $(S_4,D)$ is not log canonical 
at  any point $P$ on $\tilde{C}$.
Moreover, for any $T\in |-K_{S_4}|$,  its support cannot be contained in the support of the divisor
$D$.
\end{remark}

To our surprise, Theorem~\ref{theorem:technical} has other
applications that are interesting for their own sake. 

From here to the end of this section, let $X$ be a projective variety with at worst Kawamata log terminal singularities and let $H$ be an ample divisor on $X$.

\begin{definition} The $\alpha$-invariant of the log pair $(X,H)$ is the number defined by 
$$
\alpha\left(X, H\right)=\mathrm{sup}\left\{\lambda\in\mathbb{Q}\ \left|%
\aligned
&\text{the log pair}\ \left(X, \lambda D\right)\ \text{is log canonical for every} \\
&\text{effective $\mathbb{Q}$-divisor}\ D\ \text{on}\ X\ \text{with}\ D\sim_{\mathbb{Q}} H.\\
\endaligned\right.\right\}.%
$$
\end{definition}

The invariant $\alpha(X,H)$ has been studied intensively by many
\red{authors} who used different notations for $\alpha(X,H)$ (\cite{Am06},
\cite{Ch01b}, \cite{dFEM03}, \cite[\S~3.4]{Berman}
\cite[Definition~3.1.1]{CheltsovParkWon},
\cite[Appendix~A]{ChSh08c}, \cite[Appendix~2]{Tian2008}). The notation $\alpha(X,H)$ is due to G.~Tian
who defined $\alpha(X,H)$ in a different way  (
\cite[Appendix~2]{Tian2008}). However, both the definitions coincide by
\cite[Theorem~A.3]{ChSh08c}. In the case when $X$ is a Fano
variety, the invariant $\alpha(X, -K_X)$ is known as
the famous $\alpha$-invariant of Tian and it is denoted simply by
$\alpha(X)$. The $\alpha$-invariant of Tian plays a very important
role in K\"ahler geometry due to the following.

\begin{theorem}[{\cite{DeKo01}, \cite{Na90}, \cite{Tian}}]
\label{theorem:alpha} Let $X$ be a Fano variety of dimension $n$ with at worst quotient singularities.
 If
$\alpha(X)>\frac{n}{n+1}$, then $X$ admits an orbifold
K\"ahler--Einstein metric.
\end{theorem}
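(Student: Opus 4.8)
The plan is to establish existence through the Aubin--Yau continuity method, with the $\alpha$-invariant entering exactly at the stage of the zeroth-order a priori estimate. Fix a reference Kähler orbifold metric $\omega_0$ representing $c_1(X)$ and let $h_0$ be its Ricci potential, normalized by $\mathrm{Ric}(\omega_0)-\omega_0=i\partial\bar\partial h_0$. Finding an orbifold Kähler--Einstein metric $\omega=\omega_0+i\partial\bar\partial\varphi$ amounts to solving the complex Monge--Ampère equation $(\omega_0+i\partial\bar\partial\varphi)^n=e^{h_0-\varphi}\,\omega_0^n$, and I would embed this in the family
$$
\bigl(\omega_0+i\partial\bar\partial\varphi_t\bigr)^n=e^{\,h_0-t\varphi_t}\,\omega_0^n,\qquad t\in[0,1],
$$
whose solution at $t=0$ exists by Yau's solution of the Calabi conjecture in the orbifold category and at $t=1$ is the desired metric. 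The set of $t$ admitting a solution is open by the implicit function theorem, since the linearization of the Monge--Ampère operator is invertible for $t<1$; everything therefore reduces to proving that this set is closed, i.e. to uniform a priori estimates along the path.

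By the orbifold versions of Yau's second- and higher-order estimates, the full set of estimates follows once one has a uniform bound $\|\varphi_t\|_{C^0}\le C$ independent of $t$; on $X$ these estimates are carried out on the smooth locus and extended across the quotient singularities, where $\omega_0$ has bounded geometry in orbifold charts. Thus the crux is the $C^0$ estimate, and this is precisely where the hypothesis $\alpha(X)>\tfrac{n}{n+1}$ is used.

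For the $C^0$ estimate I would invoke the analytic description of the $\alpha$-invariant: for every $\alpha<\alpha(X)$ there is a constant $C_\alpha$ with $\int_X e^{-\alpha(\psi-\sup_X\psi)}\,\omega_0^n\le C_\alpha$ for all $\omega_0$-plurisubharmonic $\psi$, the coincidence of this threshold with the algebraic $\alpha(X,-K_X)$ of the definition being exactly \cite[Theorem~A.3]{ChSh08c}. Combining this uniform integrability with the Aubin--Yau energy functionals $I(\varphi_t)$ and $J(\varphi_t)$, together with Aubin's inequality $J\le\frac{n}{n+1}I$, one derives along the path an estimate of the shape $\sup_X\varphi_t\le C+\lambda\,\mathrm{osc}_X(\varphi_t)$ in which the coefficient $\lambda$ is forced to be strictly less than $1$ precisely by $\alpha(X)>\tfrac{n}{n+1}$. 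This closes the oscillation bound and yields the uniform $C^0$ estimate.

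The main obstacle is this $C^0$ estimate: openness and the higher-order estimates are essentially formal once Yau's machinery is available on orbifolds, whereas controlling the oscillation is where the geometry of singular potentials must be tamed. An equivalent route, underlying \cite{Na90} and \cite{DeKo01}, is to argue by contradiction: if no Kähler--Einstein metric existed, the continuity method would break at some $t_0\le1$ and produce a nontrivial multiplier (Nadel) ideal sheaf whose cosupport is a proper subvariety violating the integrability bound coming from $\alpha(X)>\tfrac{n}{n+1}$, the Demailly--Kollár comparison between analytic and algebraic thresholds being what makes the $\alpha$-invariant hypothesis applicable in the orbifold category. Either way the technical heart lies in bounding the singularities of the limiting potential, which is exactly what the $\alpha$-invariant is designed to measure.
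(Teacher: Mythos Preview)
Your sketch is a faithful outline of the standard proof of this result as given in the cited references \cite{Tian}, \cite{Na90}, \cite{DeKo01}; the paper itself does not prove Theorem~\ref{theorem:alpha} at all but merely quotes it from the literature, so there is no ``paper's own proof'' to compare against. The continuity-method argument you describe, with the $\alpha$-invariant entering via the uniform integrability bound to close the $C^0$ estimate, is exactly Tian's original approach, and the alternative multiplier-ideal route you mention is the Nadel and Demailly--Koll\'ar refinement; both are accurate and nothing is missing at the level of a sketch.
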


The exact values of the $\alpha$-invariants of smooth del Pezzo surfaces, as below, have been obtained in \cite[Theorem~1.7]{Ch07a}. Those of del Pezzo surfaces defined over a field of positive characteristic are presented in 
\cite[Theorem~1.6]{Jesus} and those of del Pezzo surfaces with du Val singularities in \cite{ChK10} and \cite{PaW10}.
\begin{theorem}
\label{theorem:GAFA} Let $S_d$ be a smooth del Pezzo surface of degree $d$. Then
\[\aligned
&\alpha(S_d)=\left\{%
\aligned
&1/3\ \mathrm{if}\ d=9, 7 \mbox{ or } \red{d=8 \mbox{ and }} S_8=\mathbb{F}_1;\\%
&1/2\ \mathrm{if}\ d=6, 5 \mbox{ or }  \red{d=8 \mbox{ and }} S_8=\mathbb{P}^1\times\mathbb{P}^1;\\%
&2/3\ \mathrm{if}\  d=4;\\%
\endaligned\right.%
\\
&\alpha(S_3)=\left\{%
\aligned
&2/3\ \mathrm{if}\ S_3\ \mathrm{is\ a\ cubic\ surface\ in}\ \mathbb{P}^{3}\ \mathrm{with\ an\ Eckardt\ point};\\%
&3/4\ \mathrm{if}\ S_3\ \mathrm{is\ a\ cubic\ surface\ in}\ \mathbb{P}^{3}\ \mathrm{without\ Eckardt\ points};\\%
\endaligned\right.%
\\
&
\alpha(S_2)=\left\{%
\aligned
&3/4\ \mathrm{if}\  |-K_{S_2}|\ \mathrm{has\ a\ tacnodal\ curve};\\%
&5/6\ \mathrm{if}\   |-K_{S_2}|\ \mathrm{has\ no\ tacnodal\ curves};\\%
\endaligned\right. %
\\
&\alpha(S_1)=\left\{%
\aligned
&5/6\ \mathrm{if}\  |-K_{S_1}|\ \mathrm{has\ a\ cuspidal\ curve};\\%
&1\ \mathrm{if}\  |-K_{S_1}|\ \mathrm{has\ no\ cuspidal\ curves}.\\%
\endaligned\right.\\
\endaligned
\]
\end{theorem}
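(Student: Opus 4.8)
The plan is to read off $\alpha(S_d)$ as the infimum, over all effective $\mathbb{Q}$-divisors $D$ with $D\qsim -K_{S_d}$, of the number $\mathrm{lct}(S_d,D)=\min_{P\in S_d}\sup\{\lambda\in\mathbb{Q}: (S_d,\lambda D)\ \text{is log canonical at}\ P\}$, and to prove the stated value $c$ in each case by establishing $\alpha(S_d)\le c$ and $\alpha(S_d)\ge c$ separately. The upper bound is always witnessed by a single explicit ``worst'' anticanonical divisor. The lower bound for $d\le 3$ is where Theorem~\ref{theorem:technical} does the work; the lower bounds for $4\le d\le 9$ lie outside the range of that theorem (cf.\ Remark~\ref{remark:del-Pezzo-high-degree}) and are obtained by a direct argument or quoted from \cite{Ch07a}.

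For the upper bounds one exhibits the following effective anticanonical divisors and reads off $\mathrm{lct}$ from the standard list of log canonical thresholds of plane curve singularities. On a cubic surface with an Eckardt point $P$ the tangent hyperplane section $T_P$ is a union $L_1+L_2+L_3$ of three concurrent lines, an ordinary triple point, so $\mathrm{lct}_P(S_3,T_P)=2/3$ and $\alpha(S_3)\le 2/3$. On a cubic without Eckardt points one finds a line $L$ and a point $P\in L$ at which the residual conic $C$ of $T_P$ is tangent to $L$; then $T_P=L+C$ has an $A_3$-singularity (tacnode) at $P$, $\mathrm{lct}_P=3/4$, and $\alpha(S_3)\le 3/4$. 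The same $A_3$-computation gives $\alpha(S_2)\le 3/4$ from a tacnodal member of $|-K_{S_2}|$; in its absence a cuspidal ($A_2$) member has $\mathrm{lct}=5/6$ and $\alpha(S_2)\le 5/6$. For $S_1$ one uses a cuspidal, resp.\ a nodal, member of $|-K_{S_1}|$ to get $\alpha(S_1)\le 5/6$, resp.\ $\alpha(S_1)\le 1$. For $4\le d\le 9$ the required divisor is assembled from the $(-1)$-curves and their proper transforms, as in the $d=4$ example $\frac32\tilde{C}+\frac12\sum_i E_i$ of Remark~\ref{remark:del-Pezzo-high-degree}, which gives $\alpha(S_4)\le 2/3$, and analogous explicit configurations handle $d=5,6$ (value $1/2$) and $d=7,8,9$ (value $1/3$).

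The lower bound for $d\le 3$ is the heart of the matter. If no member of $|-K_{S_d}|$ is non-log-canonical anywhere, then Theorem~\ref{theorem:technical} shows that no effective anticanonical $\mathbb{Q}$-divisor is non-log-canonical anywhere, whence $\alpha(S_d)\ge 1$; this covers $S_1$ without cuspidal curves. Otherwise, suppose $D\qsim -K_{S_d}$ is effective and $(S_d,\lambda D)$ is not log canonical at a point $P$ with $\lambda$ strictly below the claimed value $c$; since $c\le 1$ this forces $\mathrm{lct}_P(S_d,D)<1$. By Theorem~\ref{theorem:technical} there is a \emph{unique} $T\in|-K_{S_d}|$ non-log-canonical at $P$, with $\mathrm{Supp}(D)\supseteq\mathrm{Supp}(T)$. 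The classification of singular anticanonical curves then shows that $\mathrm{lct}_P(S_d,T)$ equals the claimed value $c$ in the extremal configuration — a tacnode for a cubic without Eckardt points and for a tacnodal $|-K_{S_2}|$, a triple point for a cubic with an Eckardt point, a cusp for $S_2$ without tacnodal curves and for $S_1$ with a cuspidal curve — and is at least $c$ otherwise. Writing $D=\sum_i a_iT_i+\Delta$ with $T_i$ the components of $T$, $a_i>0$, and $\Delta\ge 0$ containing no $T_i$ in its support, we have $\Delta\qsim -K_{S_d}-\sum_i a_iT_i$; intersecting this relation with each $T_i$ and with $-K_{S_d}$, and using $T\qsim -K_{S_d}$, $K_{S_d}^2=d$, and the self- and mutual intersection numbers of the $T_i$ (lines, conics, or irreducible cubic curves on $S_d$), bounds the $a_i$, the local intersection numbers $(T_i\cdot\Delta)_P$, and hence $\mult_P\Delta$. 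A local computation at $P$ — extracting the valuations on the blow-up tower that resolves the singularity of $T$ and bounding the log discrepancies of $\sum_i a_iT_i+\Delta$ along them — then shows that adding $\Delta$ to $\sum_i a_iT_i$ does not lower the threshold, i.e.\ $(S_d,cD)$ is log canonical at $P$, contradicting $\mathrm{lct}_P(S_d,D)<\lambda\le c$. This yields $\alpha(S_d)\ge c$.

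The main obstacle is precisely that last step: once $D$ is known to contain the bad curve $T$, one must rule out that the error term $\Delta$ accumulates enough extra multiplicity at $P$, along the exceptional valuations over $P$, to drag the threshold below $c$. This requires a sharp accounting of the constraints forced on the $a_i$ and on the local behaviour of $\Delta$ by the requirement that $\Delta\qsim -K_{S_d}-\sum_i a_iT_i$ be effective and contain no $T_i$, together with a separate, tight treatment of the finitely many special configurations (Eckardt, tacnodal, cuspidal), where $c$ is strictly smaller than the generic threshold and the inequalities carry no slack. For $4\le d\le 9$, where Theorem~\ref{theorem:technical} is unavailable, the matching lower bounds are instead obtained directly — for instance by pulling $D$ back to $\mathbb{P}^2$ or $\mathbb{P}^1\times\mathbb{P}^1$ and exploiting the explicit description of effective $\mathbb{Q}$-divisors in the anticanonical class — or simply quoted from \cite{Ch07a}.
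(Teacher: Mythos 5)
Your overall skeleton (explicit anticanonical divisors for the upper bounds, Theorem~\ref{theorem:technical} for the lower bounds when $d\leqslant 3$, and a citation to \cite{Ch07a} for $d\geqslant 4$) matches the paper, whose formal proof of this theorem is in fact just the reference to \cite[Theorem~1.7]{Ch07a}, with the $d\leqslant 3$ re-derivation carried out in Remark~\ref{remark:alpha-dp1-dp2-dp3}. But your lower-bound argument for $d\leqslant 3$ has a genuine gap at exactly the step you flag as ``the main obstacle'': after writing $D=\sum_i a_iT_i+\Delta$ you appeal to ``a local computation at $P$ --- extracting the valuations on the blow-up tower \dots'' to show that $\Delta$ cannot drag the threshold below $c$, and you never carry this out. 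That computation is not a routine intersection-number bookkeeping exercise; it is essentially the content of the case analysis in Sections 3--5 of the paper (and of \cite{Ch07a} itself), so as written your proof of $\alpha(S_d)\geqslant c$ for $d\leqslant 3$ is incomplete.

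The idea you are missing is that no such local computation is needed, because Theorem~\ref{theorem:technical} can be applied a \emph{second} time. Suppose $(S_d,\lambda D)$ is not log canonical at $P$ with $\lambda<\mu$. Since $(S_d,\lambda T)$ \emph{is} log canonical at $P$ (by \cite[Proposition~3.2]{Pa01}, as $\lambda<\mu\leqslant\mathrm{lct}_P(S_d,T)$), Lemma~\ref{lemma:convexity} applied to the family $D_\epsilon=(1+\epsilon)D-\epsilon T$ produces, at the maximal $\epsilon$ for which $D_\epsilon$ is effective, an effective anticanonical $\mathbb{Q}$-divisor $D_\epsilon$ such that $(S_d,\lambda D_\epsilon)$ --- hence $(S_d,D_\epsilon)$ --- is not log canonical at $P$, while $\mathrm{Supp}(D_\epsilon)$ omits at least one component of $T$. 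This directly contradicts the ``Moreover'' clause of Theorem~\ref{theorem:technical} applied to $D_\epsilon$, which forces $\mathrm{Supp}(D_\epsilon)\supseteq\mathrm{Supp}(T)$. (Here one needs $D\neq T$ and the initial effectivity of $D_\epsilon$ for small $\epsilon>0$, which is exactly what $\mathrm{Supp}(D)\supseteq\mathrm{Supp}(T)$ guarantees.) In other words, the theorem's support condition is not merely a constraint to be fed into a discrepancy estimate; it is the contradiction itself once you tilt $D$ away from $T$. Replacing your unexecuted valuation analysis with this two-line convexity argument closes the gap.
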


\begin{remark}\label{remark:alpha-dp1-dp2-dp3} Theorem~\ref{theorem:technical} also provides the exact values of the $\alpha$-invariants for smooth del Pezzo surfaces of degrees $\leqslant 3$. We here show how to extract the values from Theorem~\ref{theorem:technical}. 
Let $\nu$ be the greatest number such that $(S_d, \nu C)$ is log canonical for every member $C$ in $|-K_{S_d}|$. The number $\nu$ can be easily obtained from   \cite[Section ~3]{Pa01} and checked to be the same as the value listed in Theorem~\ref{theorem:GAFA} for the $\alpha$-invariant of $S_d$.
By the definition of $\nu$, there is an effective anticanonical divisor $C$ on the surface $S_d$ such
that $(S_d,\nu C)$ is log canonical but not Kawamata log terminal. This gives us
$\alpha(S_d)\leqslant\nu$.

Suppose that $\alpha(S_d)<\nu$. Then there are an effective anticanonical
$\mathbb{Q}$-divisor $D$ and a
positive rational number $\lambda<\nu$ such that $(S_d,\lambda D)$
is not log canonical at some point~$P$ on $S_d$. Since $\lambda<1$,
the log pair $(S_d,D)$ is not log canonical at the point $P$ either.
By Theorem~\ref{theorem:technical}, there exists a divisor $T\in
|-K_{S_d}|$ such that $(S_d,T)$ is not log canonical at $P$. In addition, 
$\mathrm{Supp}(D)$ contains all the irreducible components of
$\mathrm{Supp}(T)$.

The log pair $(S_d,\lambda T)$ is log canonical since $\lambda<\nu$.
Put $D_{\epsilon}=(1+\epsilon)D-\epsilon T$ for every non-negative
rational number $\epsilon$. Then $D_0=D$ and $D_\epsilon$ is
effective for $0<\epsilon\ll 1$ because $\mathrm{Supp}(D)$
contains all the irreducible components of $\mathrm{Supp}(T)$. Choose
the greatest $\epsilon$ such that $D_{\epsilon}$ is still
effective. Then $\mathrm{Supp}(D_{\epsilon})$ does not contain at
least one irreducible component of $\mathrm{Supp}(T)$.

Since $(S_d,\lambda T)$ is log canonical at $P$ and $(S_d,\lambda D)$
is not log canonical at $P$, the log pair $(S_d,\lambda
D_{\epsilon})$ is not log canonical at $P$ (see
Lemma~\ref{lemma:convexity}). In particular, the log pair $(S_d,
D_{\epsilon})$ is not log canonical at $P$. However, this contradicts  Theorem~\ref{theorem:technical} since  $D_{\epsilon}$ is an effective  anticanonical $\mathbb{Q}$-divisor. Therefore, $\alpha(S_d)=\nu$.
\end{remark}

\begin{corollary}
\label{corollary:alpha-dp1-dp2-dp3} Let $S_d$ be a smooth del Pezzo
surface of degree $d\leqslant 3$. If $d=3$, suppose,
in addition, that $S_3$ does not contain an Eckardt point. Then $S_d$
admits a K\"ahler--Einstein metric.
\end{corollary}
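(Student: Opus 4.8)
The plan is to derive Corollary~\ref{corollary:alpha-dp1-dp2-dp3} directly from Theorem~\ref{theorem:alpha} once we know the exact value of $\alpha(S_d)$ for a smooth del Pezzo surface of degree $d\leqslant 3$. Since a smooth del Pezzo surface has no singularities at all, it certainly has at worst quotient singularities, so Theorem~\ref{theorem:alpha} applies with $n=\dim S_d=2$. Thus it suffices to check that $\alpha(S_d)>\frac{n}{n+1}=\frac{2}{3}$ under the stated hypotheses.

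First I would invoke Remark~\ref{remark:alpha-dp1-dp2-dp3}, which explains how Theorem~\ref{theorem:technical} pins down $\alpha(S_d)$ for $d\leqslant 3$ to be exactly the value $\mu$ listed in Theorem~\ref{theorem:GAFA}. (Strictly, one could instead just cite Theorem~\ref{theorem:GAFA} itself, but the point of the remark is that the new Theorem~\ref{theorem:technical} gives an independent, self-contained computation.) Reading off the table: for $d=1$ we have $\alpha(S_1)\in\{5/6,\,1\}$; for $d=2$ we have $\alpha(S_2)\in\{3/4,\,5/6\}$; and for $d=3$ we have $\alpha(S_3)=3/4$ when $S_3$ has no Eckardt point, while $\alpha(S_3)=2/3$ precisely in the excluded case where $S_3$ does have an Eckardt point. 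In every remaining case the value is at least $3/4>2/3$.

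Hence under the hypotheses of the corollary --- $d\leqslant 3$, and $S_3$ with no Eckardt point when $d=3$ --- we always have $\alpha(S_d)\geqslant\tfrac{3}{4}>\tfrac{2}{3}=\tfrac{n}{n+1}$. Applying Theorem~\ref{theorem:alpha} with $X=S_d$, $n=2$, we conclude that $S_d$ admits an (orbifold, hence genuine, since $S_d$ is smooth) K\"ahler--Einstein metric.

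There is essentially no obstacle here: the content is entirely in Theorem~\ref{theorem:technical} (via Remark~\ref{remark:alpha-dp1-dp2-dp3}) and in the cited Theorem~\ref{theorem:alpha}; the corollary is a bookkeeping step. The only thing to be careful about is the degree-$3$ case, where the threshold $\tfrac23$ is attained exactly when an Eckardt point is present, which is why that case must be excluded from the statement; for $d=1,2$ no such exclusion is needed because the minimal possible value $3/4$ already exceeds $2/3$.
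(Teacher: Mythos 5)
Your proposal is correct and follows exactly the route the paper intends: combine the value $\alpha(S_d)=\mu$ obtained in Remark~\ref{remark:alpha-dp1-dp2-dp3} (via Theorem~\ref{theorem:technical} and Theorem~\ref{theorem:GAFA}) with the criterion of Theorem~\ref{theorem:alpha} for $n=2$, noting that the threshold $\tfrac{2}{3}$ is exceeded in every case except a cubic surface with an Eckardt point. The paper itself only remarks that the corollary also follows from Tian's earlier classification, so your write-up supplies precisely the bookkeeping the authors leave implicit.
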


The problem on the existence of  K\"ahler--Einstein metrics on smooth
del Pezzo surfaces is completely solved by  G.~Tian and S.-T.~Yau in \cite{Ti90} and
\cite{TiYau87}. In particular,
Corollary~\ref{corollary:alpha-dp1-dp2-dp3} follows from
\cite[Main~Theorem]{Ti90}.

The invariant $\alpha(X,H)$ has a global nature. It measures the
singularities of effective $\mathbb{Q}$-divisors on $X$ in a fixed
$\mathbb{Q}$-linear equivalence class. F.~ Ambro suggested in
\cite{Am06}    a function
that encodes the local behavior of $\alpha(X,H)$.

\begin{definition}[{\cite{Am06}}]
\label{definition:alpha-p} The $\alpha$-function
$\alpha_X^H$  of the log pair $(X, H)$ is a function on $X$  into real numbers defined as follows: for a given point $P\in X$,
$$
\alpha_X^H(P)=\mathrm{sup}\left\{\lambda\in\mathbb{Q}\ \left|%
\aligned
&\text{the log pair}\ \left(X, \lambda D\right)\ \text{is log canonical at $P$ for}\\
&\text{every effective $\mathbb{Q}$-divisor}\ D\ \text{on}\ X\ \text{with}\ D\sim_{\mathbb{Q}} H.\\
\endaligned\right.\right\}.%
$$
\end{definition}

\begin{lemma}
\label{lemma:inf-sup} The identity $\alpha(X,H)=\inf_{P\in
X}\alpha_X^H(P)$ holds.
\end{lemma}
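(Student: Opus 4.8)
The plan is to prove the two inequalities $\alpha(X,H)\leqslant\inf_{P\in X}\alpha_X^H(P)$ and $\alpha(X,H)\geqslant\inf_{P\in X}\alpha_X^H(P)$ separately, using nothing more than the definitions of $\alpha(X,H)$ and $\alpha_X^H(P)$ together with the elementary fact that a log pair is log canonical if and only if it is log canonical at every point. The first inequality is immediate: if $\lambda$ is a rational number such that $(X,\lambda D)$ is log canonical for every effective $\mathbb{Q}$-divisor $D\sim_{\mathbb{Q}}H$, then in particular $(X,\lambda D)$ is log canonical at every fixed point $P$, so $\lambda\leqslant\alpha_X^H(P)$ for all $P$; taking the supremum over such $\lambda$ gives $\alpha(X,H)\leqslant\alpha_X^H(P)$ for every $P$, and hence $\alpha(X,H)\leqslant\inf_{P\in X}\alpha_X^H(P)$.

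For the reverse inequality, set $\beta=\inf_{P\in X}\alpha_X^H(P)$ and fix any rational number $\lambda<\beta$; it suffices to show that $(X,\lambda D)$ is log canonical for every effective $\mathbb{Q}$-divisor $D\sim_{\mathbb{Q}}H$, for then $\lambda\leqslant\alpha(X,H)$, and letting $\lambda\uparrow\beta$ yields $\beta\leqslant\alpha(X,H)$. So fix such a $D$. For each point $P\in X$ we have $\lambda<\beta\leqslant\alpha_X^H(P)$, and by the definition of $\alpha_X^H(P)$ as a supremum this forces $(X,\lambda D)$ to be log canonical at $P$ — here one uses that if $\lambda$ is strictly below the supremum then $\lambda$ itself lies in the set over which the supremum is taken, because log canonicity at $P$ of $(X,\mu D)$ for $\mu$ arbitrarily close to and above $\lambda$ implies log canonicity at $P$ of $(X,\lambda D)$ (log canonicity is preserved under decreasing the boundary coefficients). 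Since $(X,\lambda D)$ is log canonical at every point $P\in X$, it is log canonical. This holds for all $D\sim_{\mathbb{Q}}H$, so $\lambda$ belongs to the defining set of $\alpha(X,H)$, completing the argument.

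The only point requiring a little care — and the main (very mild) obstacle — is the handling of the suprema: one must check that $\lambda<\alpha_X^H(P)$ genuinely implies that $(X,\lambda D)$ is log canonical at $P$, rather than merely that there exist $\mu>\lambda$ with this property. This is where the monotonicity of log canonicity in the boundary (if $(X,\mu D)$ is log canonical and $0\leqslant\lambda\leqslant\mu$, then $(X,\lambda D)$ is log canonical, with the same behaviour at a fixed point $P$) is invoked, and it guarantees that the defining sets of $\alpha(X,H)$ and of each $\alpha_X^H(P)$ are downward closed intervals of rationals, so that the strict inequalities propagate correctly. Everything else is a formal manipulation of infima and suprema.
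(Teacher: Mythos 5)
Your argument is correct: the paper itself dismisses this lemma with ``It is easy to check,'' and your two-inequality verification, including the observation that effectivity of $D$ makes the defining sets downward closed so that strict inequality below a supremum puts $\lambda$ inside the set, is exactly the routine check being left to the reader. Nothing is missing and no different route is taken.
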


\begin{proof}
It is easy to check.
%
\end{proof}

In the case when $X$ is a Fano variety, we denote
the $\alpha$-function  of the log pair $(X,-K_X)$ simply by
$\alpha_X$.
\begin{example}
\label{example:alpha-P^n} One can easily see that
$\alpha_{\mathbb{P}^n}(P)\leqslant\frac{1}{n+1}$ for every
point $P$ on $\mathbb{P}^n$. This implies that the $\alpha$-function
$\alpha_{\mathbb{P}^n}$ is the constant function with the
value $\frac{1}{n+1}$ since $\alpha(\mathbb{P}^n)=\frac{1}{n+1}$.
\end{example}

\begin{example}
\label{example:alpha-product} It is easy to see
$\alpha_{\mathbb{P}^1\times\mathbb{P}^1}(P)\leqslant\frac{1}{2}$
for every point $P$ on $\mathbb{P}^1\times\mathbb{P}^1$. Since
$\alpha(\mathbb{P}^1\times\mathbb{P}^1)=\frac{1}{2}$ by
Theorem~\ref{theorem:GAFA}, the
$\alpha$-function $\alpha_{\mathbb{P}^1\times\mathbb{P}^1}$ is the constant
function with the value $\frac{1}{2}$ by
Lemma~\ref{lemma:inf-sup}. Moreover, if $X$ is a Fano variety with
at most Kawamata log terminal singularities, then the proof of
\cite[Lemma~2.21]{ChSh08c} shows that
$$
\alpha_{X\times\mathbb{P}^1}(P)=\mathrm{min}\left\{\frac{1}{2},\ \alpha_X\left(\mathrm{pr}_1\left(P\right)\right)\right\}
$$
for every point $P$ on $X\times \mathbb{P}^1$, where $\mathrm{pr}_1:X\times\mathbb{P}^1 \to X$ is the projection on the first factor. 
Using \red{the same argument as that} in the proof
of \cite[Lemma~2.29]{ChSh08c}, one can show that the $\alpha$-function of a
product of Fano varieties with at most  Gorenstein canonical 
singularities is the point-wise minimum of the pull-backs of the
$\alpha$-functions  on the factors.
\end{example}

As 
shown in Remark~\ref{remark:alpha-dp1-dp2-dp3}, the following can be obtained from  
Theorem~\ref{theorem:technical} in a similar manner.

\begin{corollary}
\label{corollary:Park-Won-dP1-dP2-dP3}Let $S_d$ be a smooth del Pezzo surface of degree $d\leqslant 3$. Then the $\alpha$-function of $S_d$ is as follows:
\[\aligned
&\alpha_{S_3}(P)=\left\{%
\aligned
&2/3\ \ \ \text{if the point $P$ is an Eckardt point;}\\
&3/4\ \ \ \text{if the tangent hyperplane section at $P$ has a tacnode at  $P$;}\\
&5/6\ \ \ \text{if the tangent hyperplane section at $P$ has a cusp at  $P$;}\\
&1 \ \ \ \text{otherwise;}\\
\endaligned\right.%
\\
&
\alpha_{S_2}(P)=\left\{%
\aligned
&3/4\ \ \ \text{if there is an effective anticanonical divisor with a tacnode at  $P$;}\\
&5/6\ \ \ \text{if there is an effective anticanonical divisor with a cusp at $P$;}\\
&1 \ \ \ \text{otherwise;}\\
\endaligned\right. %
\\
&\alpha_{S_1}(P)=\left\{%
\aligned
&5/6\ \ \ \text{if there is an effective anticanonical divisor with a cusp at $P$;}\\
&1 \ \ \ \text{otherwise.}\\
\endaligned\right.\\
\endaligned
\]
\end{corollary}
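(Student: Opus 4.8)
The plan is to prove the pointwise statement $\alpha_{S_d}(P)=\mu$ for each $P\in S_d$, where $\mu=\mu(P)$ is the value asserted for that point, by running at $P$ the same argument that Remark~\ref{remark:alpha-dp1-dp2-dp3} uses globally, with Theorem~\ref{theorem:technical} as the engine. This suffices: together with Lemma~\ref{lemma:inf-sup} it also reproves the degree $\leqslant 3$ part of Theorem~\ref{theorem:GAFA}. I would establish $\alpha_{S_d}(P)\leqslant\mu$ and $\alpha_{S_d}(P)\geqslant\mu$ separately.

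For the upper bound, observe first that $\mu\leqslant 1$ in every case. A general member of $|-K_{S_d}|$ through $P$ is at worst nodal at $P$ (for $d\geqslant 2$ the anticanonical system is base-point free, so Bertini gives a member smooth at $P$; for $d=1$ and $P$ not the base point the unique member through $P$ is at worst nodal at $P$), and a curve with at worst a node has log canonical threshold $1$ there, so $\alpha_{S_d}(P)\leqslant 1$ always. When $\mu<1$ the value comes from a distinguished anticanonical divisor $T$: for $d=3$ this is the tangent hyperplane section $T_P$, whose singularity at $P$ is an ordinary triple point of three concurrent lines ($\mu=2/3$, the Eckardt case), a tacnode of a conic plus a tangent line ($\mu=3/4$), or a cusp ($\mu=5/6$); for $d\leqslant 2$ it is the anticanonical divisor with a tacnode ($\mu=3/4$) or a cusp ($\mu=5/6$) at $P$ whose existence is the very hypothesis of that case. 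In each instance $\mathrm{lct}_P(S_d,T)=\mu$ by a standard local computation on a smooth surface (cf. \cite{Pa01}), so $(S_d,\mu T)$ is log canonical but not Kawamata log terminal at $P$, giving $\alpha_{S_d}(P)\leqslant\mu$.

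For the lower bound, suppose $\alpha_{S_d}(P)<\mu$. Then there are an effective anticanonical $\mathbb{Q}$-divisor $D$ and a rational number $\lambda<\mu\leqslant 1$ with $(S_d,\lambda D)$ not log canonical at $P$, hence $(S_d,D)$ is not log canonical at $P$. By Theorem~\ref{theorem:technical} there is a unique $T\in|-K_{S_d}|$ such that $(S_d,T)$ is not log canonical at $P$, and $\mathrm{Supp}(D)$ contains every component of $\mathrm{Supp}(T)$; for $d=3$, Corollary~\ref{corollary:technical} identifies $T$ with $T_P$, while for $d\leqslant 2$ uniqueness forces $T$ to be the tacnodal, resp. cuspidal, anticanonical divisor from the case hypothesis, since any such divisor is itself not log canonical at $P$. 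Thus $\mathrm{lct}_P(S_d,T)=\mu>\lambda$, so $(S_d,\lambda T)$ is log canonical at $P$. Now put $D_\epsilon=(1+\epsilon)D-\epsilon T$ for $\epsilon\geqslant0$; because $\mathrm{Supp}(D)\supseteq\mathrm{Supp}(T)$, it is effective for small $\epsilon$, and $D\neq T$ (otherwise $(S_d,\lambda D)=(S_d,\lambda T)$ would be log canonical at $P$; note a combination of the components of $T$ that is $\mathbb{Q}$-linearly equivalent to $-K_{S_d}$ must be $T$ itself, since the intersection form on these components is non-degenerate). Choosing the largest $\epsilon$ for which $D_\epsilon$ is effective, $\mathrm{Supp}(D_\epsilon)$ omits at least one component of $T$. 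By convexity of log canonicity (Lemma~\ref{lemma:convexity}), $(S_d,\lambda D_\epsilon)$ is still not log canonical at $P$, hence so is $(S_d,D_\epsilon)$; applying Theorem~\ref{theorem:technical} to $D_\epsilon$ gives $\mathrm{Supp}(D_\epsilon)\supseteq\mathrm{Supp}(T)$, a contradiction. Therefore $\alpha_{S_d}(P)\geqslant\mu$, and combined with the upper bound $\alpha_{S_d}(P)=\mu$.

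The main obstacle is making ``$\mathrm{lct}_P(S_d,T)=\mu$'' precise, i.e. the classification of the possible singularities at $P$ of the relevant anticanonical divisor: the tangent hyperplane sections of a smooth cubic surface for $d=3$ (node, cusp, tacnode $=$ conic $+$ tangent line, or three concurrent lines at an Eckardt point), and the pull-backs of lines under the degree-two anticanonical morphism, resp. the members of the anticanonical pencil, for $d=2$, resp. $d=1$. In particular one must rule out worse singularities, such as a $D_4$ point or an $A_k$ with $k\geqslant4$ for $d\leqslant 2$, or a non-reduced tangent section for $d=3$, so that only the log canonical thresholds $2/3,3/4,5/6,1$ occur. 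A secondary, routine point is checking that $D_\epsilon$ genuinely drops a component of $T$ at the maximal admissible $\epsilon$, which follows once $D\neq T$ is known.
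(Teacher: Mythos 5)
Your proposal is correct and follows essentially the same route as the paper: the paper proves this corollary by running the argument of Remark~\ref{remark:alpha-dp1-dp2-dp3} pointwise, i.e.\ the upper bound $\alpha_{S_d}(P)\leqslant\mu$ from the distinguished anticanonical divisor with $\mathrm{lct}_P=\mu$ (via \cite[Proposition~3.2]{Pa01}, which also supplies the singularity classification you flag as the ``main obstacle''), and the lower bound by the perturbation $D_\epsilon=(1+\epsilon)D-\epsilon T$ combined with Lemma~\ref{lemma:convexity} and Theorem~\ref{theorem:technical}. Your additional verification that $D\neq T$ is a sensible detail the paper leaves implicit.
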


By Lemma~\ref{lemma:inf-sup},
Corollary~\ref{corollary:Park-Won-dP1-dP2-dP3} implies that 
Theorem~\ref{theorem:GAFA} holds for smooth del Pezzo surfaces of degrees at most $3$. Thus, it is quite
natural that we should extend 
Corollary~\ref{corollary:Park-Won-dP1-dP2-dP3} to all smooth del
Pezzo surfaces in order to obtain a functional generalisation of
Theorem~\ref{theorem:GAFA}. This will be done in
Section~\ref{sec:dP-alpha}, where we prove

\begin{theorem}
\label{theorem:Park-Won} 
Let $S_d$ be a smooth del Pezzo surface of degree $d\geqslant 4$. Then the $\alpha$-function of $S_d$ is as follows:
\[\aligned
&\alpha_{\mathbb{P}^2}(P)=1/3;\\
&\alpha_{\mathbb{F}_1}(P)=1/3; \ \ \ \ \ \alpha_{\mathbb{P}^1\times\mathbb{P}^1}(P)=1/2;\\
&\alpha_{S_7}(P)=\left\{
\aligned
& 1/3\ \ \ \aligned & \aligned & \text{if the point $P$ lies on the $(-1)$-curve that}\\
& \text{intersects two other $(-1)$-curves;}\\ \endaligned
 \endaligned\\
&1/2 \ \ \ \text{otherwise;}\\
\endaligned\right.\\
&\alpha_{S_6}(P)=1/2;\\
&\alpha_{S_5}(P)=\left\{
\aligned
&1/2 \ \ \ \text{if there is $(-1)$-curve passing through the point $P$;}\\
&2/3\ \ \ \text{if there is no $(-1)$-curve passing though $P$;}\\
\endaligned\right.
\\
&\alpha_{S_4}(P)=\left\{
\aligned &2/3\ \ \ \aligned & \text{if $P$ is on a $(-1)$-curve;}
 \endaligned\\
& 3/4\ \ \ \aligned & \aligned & \text{if there is an
effective anticanonical divisor that }\\
& \text{consists of two $0$-curves intersecting tangentially at  $P$;}\\ \endaligned
 \endaligned\\
&5/6\ \ \ \text{otherwise.}\\
\endaligned\right.\\
\endaligned
\]
\end{theorem}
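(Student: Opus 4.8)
The plan is to pin down $\alpha_{S_d}(P)$ at each point by sandwiching it between matching upper and lower bounds, organised according to the surface $S_d$ and the stratum of $P$. Two inputs are free: the global values $\alpha(S_d)$ of Theorem~\ref{theorem:GAFA}, and the inequality $\alpha_{S_d}(P)\geqslant\alpha(S_d)$ at \emph{every} point (immediate from the definitions, since for $\lambda<\alpha(S_d)$ the pair $(S_d,\lambda D)$ is log canonical everywhere for every $D\qsim-K_{S_d}$). So the real work splits into (i) a pointwise upper bound $\alpha_{S_d}(P)\leqslant\mu(P)$, where $\mu(P)$ denotes the asserted value, and (ii) an improved lower bound $\alpha_{S_d}(P)\geqslant\mu(P)$ at the strata where $\mu(P)>\alpha(S_d)$.

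For the upper bounds I would, for each $P$, write down an effective anticanonical $\mathbb{Q}$-divisor $D$ with $\mathrm{lct}_P(S_d,D)=\mu(P)$. The raw material is: (a) a smooth curve $C$ through $P$ in a class for which $-K_{S_d}-mC\qsim 0$ is (represented by an) effective $\mathbb{Q}$-divisor $\Gamma$ for the appropriate rational $m=1/\mu(P)\in\{\tfrac{3}{2},2,3\}$, with $\Gamma$ chosen to avoid $P$ or to meet $C$ transversally there; then $D=mC+\Gamma$ gives $\mathrm{lct}_P=1/m=\mu(P)$. Concretely this produces $\tfrac13$ on $\mathbb{P}^2$, on $\mathbb{F}_1$ (via $3f_P+2s$), and on $S_7$ along the $(-1)$-curve meeting two others; $\tfrac12$ on $\mathbb{P}^1\times\mathbb{P}^1$, $S_6$, $S_7$, $S_5$ along the $(-1)$-curves; $\tfrac23$ on $S_5$ off the $(-1)$-curves (using a $0$-curve through $P$ of a conic pencil with base locus the four blown-up points, plus the residual $\tfrac12\sum E_i$) and on $S_4$ along the $(-1)$-curves (as in Remark~\ref{remark:del-Pezzo-high-degree}). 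For the strata with $\mu(P)\in\{\tfrac34,\tfrac56\}$ one may use (a) with $m\in\{\tfrac43,\tfrac65\}$, or, more geometrically, (b) an effective anticanonical \emph{curve} with an $A_3$ (tacnode, $\mathrm{lct}=\tfrac34$) or an $A_2$ (ordinary cusp, $\mathrm{lct}=\tfrac56$) at $P$ — on $S_4$ a tangent pair of $0$-curves of complementary conic pencils when $P$ lies on the corresponding discriminant, and a cuspidal anticanonical hyperplane section with cusp at $P$ when $P$ is general — such members being produced exactly as in the argument of Remark~\ref{remark:alpha-dp1-dp2-dp3} via \cite[Proposition~3.2]{Pa01} together with a dimension count on $|-K_{S_d}|$. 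For the surfaces whose $\alpha$-function is claimed constant ($\mathbb{P}^2$, $\mathbb{F}_1$, $\mathbb{P}^1\times\mathbb{P}^1$, $S_6$), step (a) together with Lemma~\ref{lemma:inf-sup} and the known value $\alpha(S_d)$ already finishes the proof; for $\mathbb{P}^1\times\mathbb{P}^1$ this also follows from Example~\ref{example:alpha-product}.

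For the improved lower bounds — the heart of the matter — suppose $(S_d,\lambda D)$ is not log canonical at such a $P$ for some $\lambda<\mu(P)$ and some $D\qsim-K_{S_d}$; since $\lambda<1$ the pair $(S_d,D)$ is then not log canonical at $P$ and $\mult_P(D)>1/\lambda>1$. I would pick a curve $C$ through $P$ adapted to the stratum — a $(-1)$-curve, a $0$-curve of a conic bundle, or a suitable member of $|-K_{S_d}|$ — and write $D=aC+\Omega$ with $C\not\subset\mathrm{Supp}(\Omega)$. When $a$ is small, the intersection estimate $\mult_P(\Omega)\leqslant\frac{\Omega\cdot C}{\mult_P(C)}\leqslant\frac{-K_{S_d}\cdot C-aC^2}{\mult_P(C)}$, combined with the classification of the finitely many $(-1)$-curves and conic bundles through $P$ and their mutual incidences on $S_d$, already contradicts $\mult_P(D)>1/\lambda$; when $a$ is not small one instead runs inversion of adjunction along $C$ (so that $\mult_P(\Omega|_C)>1$), or a weighted blow-up at $P$, the multiplicities again being controlled by intersection numbers and by $(-K_{S_d})^2=d$, to reach the same contradiction.

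I expect the degree-$4$ case to be the main obstacle. There one must first correctly identify the three strata, and then, for $P$ in the generic stratum, rule out \emph{every} configuration of a not-log-canonical effective anticanonical $\mathbb{Q}$-divisor with $\mathrm{lct}_P<\tfrac56$; this forces a somewhat intricate case analysis of how $\mathrm{Supp}(D)$ can meet the sixteen $(-1)$-curves and the pencils of conics through $P$, with separate treatment of points lying on several $(-1)$-curves and of points on the tangent-conic discriminant. Note that, by Remark~\ref{remark:del-Pezzo-high-degree}, no blanket statement like Theorem~\ref{theorem:technical} is available in degree $\geqslant 4$ to shortcut this analysis, which is precisely why the argument must be carried out conic-bundle by conic-bundle. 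The remaining surfaces $S_7$ and $S_5$ are lighter instances of the same scheme.
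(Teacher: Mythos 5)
Your overall sandwich structure (pointwise upper bound by an explicit divisor, improved lower bound at the strata where the claimed value exceeds $\alpha(S_d)$) matches the paper, and your upper-bound constructions are essentially the ones used there. But the heart of the matter --- the improved lower bounds at points of $S_5$ off the $(-1)$-curves and at points of $S_4$ off the $(-1)$-curves --- is only gestured at in your proposal (``intersection estimates, inversion of adjunction, weighted blow-ups, an intricate case analysis conic-bundle by conic-bundle''), and the one concrete assertion you make about this step is wrong in a way that hides the key idea. You claim that ``by Remark~\ref{remark:del-Pezzo-high-degree}, no blanket statement like Theorem~\ref{theorem:technical} is available in degree $\geqslant 4$ to shortcut this analysis.'' The paper's proof of Lemma~\ref{lemma:dP4} does exactly such a shortcut: since $P$ lies on no $(-1)$-curve, the blow-up $f\colon S_3\to S_4$ at $P$ is a \emph{smooth cubic surface}, the crepant pull-back $\tilde{D}+(\mathrm{mult}_P(D)-1)E$ is an effective anticanonical $\mathbb{Q}$-divisor on $S_3$, and Corollary~\ref{corollary:technical} applied on $S_3$ forces $\mathrm{Supp}(\tilde{D})$ to contain every component of the tangent hyperplane section $T_Q$ at the bad point $Q\in E$; pushing $T_Q$ back down to $S_4$ contradicts the normalization (via Lemma~\ref{lemma:convexity}) that $\mathrm{Supp}(D)$ omits a component of each of the finitely many non-log-canonical anticanonical curves through $P$. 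Likewise the $2/3$ bound on $S_5$ is obtained by blowing up $P$ and quoting $\alpha(S_4)=2/3$ on the resulting smooth quartic del Pezzo surface, not by an analysis on $S_5$ itself.

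Your fallback of a direct case analysis on $S_4$ would also not close as sketched: for a $0$-curve $C$ through $P$ not in $\mathrm{Supp}(D)$ one only gets $2=C\cdot D\geqslant\mathrm{mult}_P(D)$, which does not contradict $\mathrm{mult}_P(D)>6/5$, so the first infinitesimal neighbourhood must be analysed --- i.e.\ one is forced onto the blown-up cubic surface anyway, at which point the analysis is precisely (a special case of) Theorem~\ref{theorem:technical}. A second, smaller omission: for $S_7$ away from the distinguished $(-1)$-curve and for $S_6$ you need $\alpha_{S_d}(P)\geqslant 1/2$, which is strictly larger than $\alpha(S_7)=1/3$; the paper gets this from the blow-down comparison $\alpha_{S_d}(P)\geqslant\alpha_{S}(f(P))$ (Lemma~\ref{lemma:blow-down}), contracting down to $\mathbb{P}^1\times\mathbb{P}^1$, whereas your plan supplies no concrete mechanism for these strata either. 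In short: the statement of the strategy is fine, but the decisive lower-bound arguments are missing, and the reduction to lower-degree del Pezzo surfaces via blow-up --- which is what makes the proof work and is the reason Theorem~\ref{theorem:technical} is proved first --- is absent from your proposal.
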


\bigskip

The primary statement in this article is Theorem~\ref{theorem:technical}. 
As explained before, it immediately implies the main result of the article, Theorem~\ref{theorem:main} and also recovers Theorem~\ref{theorem:KPZ-actions-1-2}.
Theorem~\ref{theorem:technical} will be proved in the following way.

In
Section~\ref{section:preliminaries}, we review the results that will
be used in this article.  As a warm-up, we verify 
Theorem~\ref{theorem:technical} for a smooth
del Pezzo surface of degree $1$ (see Lemma~\ref{lemma:dP1}). This is very easy and instructive.

 In
Section~\ref{sec:dP2}, we establish two results about \emph{singular} del
Pezzo surfaces  of degree 2 that  play a role in the proof of
Theorems~\ref{theorem:technical} for smooth cubic surfaces. In addition, these two results immediately  yield   Theorem~\ref{theorem:technical} for
a smooth del Pezzo surface of degree $2$ (see
Lemma~\ref{lemma:dP2}).

In Section~\ref{sec:cubic-surfaces}, we
prove Theorem~\ref{theorem:technical}  for a smooth cubic surface.  This will be done by a thorough case-by-case analysis of all possible types of tangent hyperplane sections on a smooth cubic surface. Indeed, for a given point $P$ on the smooth cubic surface, we show that every effective anticanonical $\mathbb{Q}$-divisor is log canonical at $P$ if the tangent hyperplane section at $P$ is log canonical at $P$ 
(Lemmas~\ref{lemma:cubic-ODP},~\ref{lemma:cubic-triangle} and~\ref{lemma:cubic-nodal}), whereas we show that 
its support  contains the support of the tangent hyperplane section at $P$ if  an effective anticanonical $\mathbb{Q}$ divisor  and the tangent hyperplane section at $P$ are not log canonical at~$P$ (see Lemmas~\ref{lemma:cubic-Eckard},~\ref{lemma:cubic-tacnodal} and~\ref{lemma:cubic-cusp}).

The proof of Lemma~\ref{lemma:cubic-triangle} deserves a separate section because it is  the central and the most beautiful
part of the article and it is a bit lengthy. This   will be presented in Section~\ref{sec:three-lines}.

Appendix~\ref{sec:KPZ} will deal with Lemma~\ref{lemma:KPZ-cubic} for the readers' convenience.

\section{Preliminaries}%
\label{section:preliminaries}

This section presents simple but essential tools for the article.
Most of the described results here are well-known and  valid in much more general
settings (cf. \cite{Ko97}, \cite{KoMo} and \cite{La04II}).

Let $S$ be a~projective surface with at most du Val singularities, let $P$ be a smooth point of the surface $S$ and
let $D$ be an effective $\mathbb{Q}$-divisor on $S$.

\begin{lemma}
\label{lemma:mult} If the log pair $(S,D)$ is not log canonical at the smooth point $P$, then
$$\mathrm{mult}_{P}(D)>1.$$
\end{lemma}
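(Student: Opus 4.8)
The plan is to reduce the statement to the definition of log canonicity via a log resolution and an elementary intersection‐theoretic estimate. Let me first recall what needs to be shown: for a smooth point $P$ on a surface $S$ with at most du Val singularities and an effective $\mathbb{Q}$-divisor $D$, if $(S,D)$ is not log canonical at $P$ then $\mult_P(D)>1$.

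First I would work locally near $P$; since $P$ is a smooth point of $S$, I may blow it up. Let $\pi\colon \tilde S\to S$ be the blow-up of $P$ with exceptional curve $E\cong\mathbb{P}^1$, so that $K_{\tilde S}=\pi^*K_S+E$ and $\pi^*D=\tilde D+\mult_P(D)\,E$, where $\tilde D$ is the proper transform of $D$. The discrepancy formula gives
\[
K_{\tilde S}+\tilde D=\pi^*(K_S+D)+\bigl(1-\mult_P(D)\bigr)E.
\]
If the pair $(S,D)$ is not log canonical at $P$, then in particular $(\tilde S,\tilde D+(\mult_P(D)-1)E)$ is not log canonical along $E$ (log canonicity is preserved under crepant pullback). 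If $\mult_P(D)\leqslant 1$, then the coefficient $\mult_P(D)-1$ of $E$ is $\leqslant 0$; since $E$ is a smooth curve on the smooth surface $\tilde S$ and $\tilde D$ is effective, one needs to rule out that non-log-canonicity could still arise from $\tilde D$ meeting $E$ badly, or from $\tilde D$ itself having high multiplicity at a point of $E$.

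The key step, which I expect to be the main (though modest) obstacle, is to verify that non-log-canonicity of $(\tilde S,\tilde D+(\mult_P(D)-1)E)$ at a point $Q\in E$ again forces a multiplicity bound, and to set up an induction that must terminate. The standard way is: either continue blowing up points infinitely often, tracking that the total coefficient of exceptional divisors stays $\leqslant 1$ as long as the running multiplicity stays $\leqslant 1$ — using that $\mult_Q(\tilde D)\leqslant\mult_P(D)$ when $Q$ lies on the proper transform, together with $\mult_Q(E)=1$ — or else invoke the well-known inversion-of-adjunction-type fact that a pair $(\text{smooth surface}, \sum a_i C_i)$ with all $a_i\leqslant 1$ and the $C_i$ having multiplicity $\leqslant 1$ at a given point, hence at most two smooth branches there, is log canonical at that point precisely when the two branches are distinct smooth curves meeting transversally and the sum of their coefficients there is $\leqslant 1$; a direct local computation (or citation to \cite{Ko97}) shows this holds. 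Thus assuming $\mult_P(D)\leqslant 1$ produces a contradiction with the failure of log canonicity at $P$. Alternatively, and more cleanly for a write-up, one can simply cite the standard inequality that $(S,D)$ log canonical at a smooth surface point implies $\mult_P(D)\leqslant 1$ (equivalently its contrapositive), e.g. from \cite[Chapter~3]{Ko97} or \cite{KoMo}, since this is exactly the ``easy direction'' of the multiplicity criterion for log canonical thresholds on surfaces. I would present the one-blow-up discrepancy computation as the heart of the argument and defer the finishing combinatorial/local step to the cited references.
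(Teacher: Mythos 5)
Your proposal is correct and amounts to the same thing the paper does: the paper's entire proof is a citation of the well-known bound $\mathrm{lct}_P(S,D)\geqslant 1/\mult_P(D)$ at a smooth point (\cite[Proposition~9.5.13]{La04II}), and your one-blow-up discrepancy computation together with the multiplicity-tracking induction (coefficient of each new exceptional divisor is $\mult(\cdot)-1\leqslant 0$ and the multiplicity at any point of the exceptional curve is at most $2\mult_P(D)-1\leqslant 1$, so the invariant propagates until a log resolution is reached) is precisely the standard proof of that cited fact, with the direct citation you offer as the clean fallback. One caveat: the parenthetical classification you float as an alternative finish (``log canonical precisely when the two branches are distinct smooth curves meeting transversally with coefficient sum $\leqslant 1$'') is misstated --- a single smooth curve with coefficient $1$, or three concurrent lines each with coefficient $1/3$, already give log canonical pairs, and $\mult_P(D)\leqslant 1$ does not bound the number of branches by two --- but nothing in your main line of argument depends on it.
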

\begin{proof}
This is a well-known fact. See \cite[Proposition~9.5.13]{La04II}, for instance.
\end{proof}

Write
$D=\sum_{i=1}^{r}a_{i}D_{i}$, where $D_{i}$'s are distinct prime divisors on the surface $S$ and $a_{i}$'s are positive rational numbers. 
\begin{lemma}
\label{lemma:convexity} Let $T$ be an effective
$\mathbb{Q}$-divisor on $S$  such that 
\begin{itemize}
\item $T\sim_{\mathbb{Q}} D$ but $T\ne D$;
 \item $T=\sum_{i=1}^{r}b_{i}D_{i}$ for some non-negative
rational numbers $b_1, \ldots, b_r$.
\end{itemize}  For every non-negative
rational number $\epsilon$, put
$D_\epsilon=(1+\epsilon)D-\epsilon T$. Then 
\begin{enumerate}
\item $D_{\epsilon}\sim_{\mathbb{Q}} D$
for every $\epsilon\geqslant 0$;
\item the set $
\left\{\epsilon\in\mathbb{Q}_{> 0}\ \vert\ D_\epsilon\ \text{is
effective}\right\}
$ attains the maximum $\mu$;
\item  the support of the divisor
$D_\mu$ does not contain at least one component of $\mathrm{Supp}(T)$;
\item if $(S,T)$ is log canonical at $P$ but $(S,D)$ is not
log canonical at $P$, then $(S,D_{\mu})$ is not log canonical at
$P$.
\end{enumerate}
\end{lemma}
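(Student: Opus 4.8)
The plan is to verify the four claims of Lemma~\ref{lemma:convexity} in order, since each is essentially formal once the setup is unwound. For (1), observe that $D_\epsilon = (1+\epsilon)D - \epsilon T = D + \epsilon(D - T)$, and since $D \sim_{\mathbb{Q}} T$ we have $D - T \sim_{\mathbb{Q}} 0$, hence $D_\epsilon \sim_{\mathbb{Q}} D$ for all $\epsilon \geqslant 0$; no positivity is needed here. For (2), write out coefficients: $D_\epsilon = \sum_{i=1}^r \big((1+\epsilon)a_i - \epsilon b_i\big)D_i$, so $D_\epsilon$ is effective precisely when $(1+\epsilon)a_i \geqslant \epsilon b_i$ for every $i$. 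For indices with $b_i \leqslant a_i$ this holds for all $\epsilon \geqslant 0$; for indices with $b_i > a_i$ it holds exactly when $\epsilon \leqslant a_i/(b_i - a_i)$. The set of admissible $\epsilon$ is therefore either all of $\mathbb{Q}_{\geqslant 0}$ (if $b_i \leqslant a_i$ for every $i$) or the interval $[0, \mu]$ with $\mu = \min\{a_i/(b_i-a_i) : b_i > a_i\}$; in either case one must note that $T \neq D$ forces some coefficient to differ, and since $D \sim_{\mathbb{Q}} T$ with both effective having the same support-components among the $D_i$, the difference cannot be one-sided — I would argue that $\sum a_i D_i \sim_{\mathbb{Q}} \sum b_i D_i$ with the displayed constraint means the maximum $\mu$ is attained and finite, which is what is claimed.

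For (3), at $\epsilon = \mu$ the maximality means we cannot increase $\epsilon$ further, so at least one of the defining inequalities becomes an equality: there is an index $j$ with $(1+\mu)a_j = \mu b_j$, i.e. the coefficient of $D_j$ in $D_\mu$ vanishes. Since $b_j > a_j > 0$ in that case, $D_j$ is a genuine component of $\mathrm{Supp}(T)$, and it does not appear in $\mathrm{Supp}(D_\mu)$. (In the degenerate case where the admissible set is unbounded, the hypothesis $T \neq D$ together with $D_\mu$ being the ``last'' effective one is vacuous, but then the statement about the maximum in (2) already rules this out, so I would dispatch this edge case by the argument above.)

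For (4), this is the one genuine application of convexity of log canonicity. We have $D = \frac{1}{1+\mu}D_\mu + \frac{\mu}{1+\mu}T$, a convex combination with nonnegative coefficients summing to $1$. If $(S, T)$ were log canonical at $P$ and $(S, D_\mu)$ were also log canonical at $P$, then by convexity of the log canonical condition (the log discrepancy is a concave function of the boundary, equivalently $\mathrm{lct}$ behaves convexly under convex combinations) the pair $(S, D)$ would be log canonical at $P$ as well, contradicting the hypothesis. Hence $(S, D_\mu)$ is not log canonical at $P$. I would phrase this as: if $(S, \frac{1}{1+\mu}D_\mu + \frac{\mu}{1+\mu}T)$ fails to be log canonical at $P$ while $(S,T)$ is log canonical at $P$, then since the pair $(S, D_\mu)$ contributes with coefficient $\frac{1}{1+\mu} < 1$, and convex combinations of log canonical pairs are log canonical, $(S, D_\mu)$ cannot be log canonical at $P$.

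I do not anticipate a serious obstacle: all four parts are bookkeeping plus one invocation of the standard fact that log canonicity is preserved under convex combinations of boundaries. The only point requiring a little care is making the case analysis in (2) airtight — in particular ruling out, using $T \neq D$ and $\mathbb{Q}$-linear equivalence, the possibility that the admissible set of $\epsilon$ is all of $\mathbb{Q}_{\geqslant 0}$, which would make ``the maximum $\mu$'' ill-defined; but since $T$ is assumed to have support among the $D_i$ and $T \sim_{\mathbb{Q}} D$ with $T \neq D$, at least one coefficient $b_i$ strictly exceeds the corresponding $a_i$ (otherwise intersecting both sides with an appropriate curve or comparing degrees against $D - T \sim_{\mathbb{Q}} 0$ forces $T = D$), so $\mu$ is finite and attained.
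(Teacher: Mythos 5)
Your proof is correct and follows essentially the same route as the paper: the positivity $b_k>a_k$ for some $k$ is forced because an effective nonzero divisor cannot be $\mathbb{Q}$-linearly (hence numerically) trivial on a projective surface, the maximum $\mu=\min\{a_i/(b_i-a_i):b_i>a_i\}$ coincides with the paper's $1/(c-1)$ for $c=\max b_i/a_i$, and part (4) is the same convex-combination argument via $D=\frac{1}{1+\mu}D_\mu+\frac{\mu}{1+\mu}T$. No gaps.
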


\begin{proof}
The first assertion is obvious. 
For the rest we put
$$
c=\mathrm{max}\left\{\frac{b_{i}}{a_{i}}\ \Big\vert\ i=1, \ldots, r \right\}.%
$$
For some index $k$ we have
$c=\frac{b_{k}}{a_{k}}$. 

Suppose that $c\leqslant 1$. Then
$a_i\geqslant b_i$  for every $i$. This means that the
divisor $D-T=\sum_{i=1}^{r}(a_i-b_i)D_i$ is effective. However, it is impossible since $D-T$ is non-zero and numerically trivial on a
projective surface. Thus, $c>1$, and hence $b_k>a_k$.

Put $\mu=\frac{1}{c-1}$. Then
$\mu=\frac{a_k}{b_{k}-a_k}>0$ and 
$$
D_\mu=\frac{b_k}{b_{k}-a_k}D-\frac{a_k}{b_{k}-a_k}T=\sum_{i=1}^r\frac{b_ka_i-a_kb_i}{b_{k}-a_k}D_i,
$$
where $b_ka_i-a_kb_i\geqslant 0$ by the choice of 
$k$. In particular, the divisor $D_\mu$ is effective and its
support does not contain the curve $D_k$. Moreover, for every
positive rational number $\epsilon$, 
$D_{\epsilon}=\sum_{i=1}^{r}(a_i+\epsilon a_i-\epsilon b_i)D_i$.
If $\epsilon>\mu$, then
$$
\epsilon(b_k-a_k)>\mu(b_k-a_k)=\frac{a_k}{b_{k}-a_k}(b_k-a_k)=a_k,%
$$
and hence $D_\epsilon$ is not effective. This proves the second and the third assertions.

If both $(S,T)$  and $(S,D_{\mu})$ are log canonical at $P$, then $(S,D)$ must be log canonical at $P$
because $D=\frac{\mu}{1+\mu}T+\frac{1}{1+\mu}D_\mu$ and
$\frac{\mu}{1+\mu}+\frac{1}{1+\mu}=1$.
\end{proof}

Despite its na\"ive appearance, Lemma~\ref{lemma:convexity} is a very
handy tool. To illustrate this, we here verify Theorem~\ref{theorem:technical} for a del Pezzo surface of degree $1$.
This simple case also  immediately follows from the proof of
\cite[Lemma~3.1]{Ch07a} or from the proof of
\cite[Proposition~5.1]{KPZ12b}.

\begin{lemma}
\label{lemma:dP1} Suppose that $S$ is a smooth del Pezzo surface of degree $1$ and $D$ is an effective anticanonical $\mathbb{Q}$-divisor on $S$.  If the log pair $(S,D)$ is not log
canonical at the point $P$, then
there exists a  unique divisor $T\in |-K_{S}|$ such that $(S,T)$ is not
log canonical at $P$. Moreover, the support of $D$  contains all the
irreducible components of $T$.
\end{lemma}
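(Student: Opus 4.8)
The plan is to exploit the very special geometry of a smooth del Pezzo surface $S$ of degree $1$: the anticanonical system $|-K_S|$ is a pencil with exactly one base point, call it $Q$, and $(-K_S)^2 = 1$. First I would dispose of the easy half: since $-K_S$ is ample with $(-K_S)^2 = 1$, for any point $P$ and any effective anticanonical $\mathbb{Q}$-divisor $D$, Lemma~\ref{lemma:mult} forces $\mathrm{mult}_P(D) > 1$ if $(S,D)$ is not log canonical at $P$, while $D \cdot (-K_S) = 1$ gives $\mathrm{mult}_P(D) \leqslant D \cdot \ell$ for any curve $\ell$ through $P$ in a suitable pencil; more precisely one uses that any two members of $|-K_S|$ meet only at $Q$. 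So the first step is to pin down $P$: I claim $P = Q$, the base point of the pencil. Indeed if $P \neq Q$ there is a unique member $C_P \in |-K_S|$ passing through $P$ (the pencil restricted to $P$ is a $\mathbb{P}^0$ after imposing that one condition), and since $C_P^2 = 1$ with $C_P$ passing through the smooth point $P$, one shows $C_P$ is either irreducible or a sum of curves whose configuration is too rigid to support high multiplicity — this is where I would invoke the standard inequality $\mathrm{mult}_P(D) \leqslant D\cdot C_P - (\text{contributions away from }P) = 1$ unless $\mathrm{Supp}(D) \supseteq \mathrm{Supp}(C_P)$, and then run the convexity trick of Lemma~\ref{lemma:convexity} with $T = C_P$ to reduce to a divisor whose support omits a component of $C_P$, contradicting the bound. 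The upshot: $P$ must be the base point $Q$, and the relevant $T$ is the unique member of $|-K_S|$ singular at $Q$, i.e. the "cuspidal" (or more degenerate) fibre.

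Next I would establish existence and uniqueness of $T \in |-K_S|$ with $(S,T)$ not log canonical at $P = Q$. Blowing up $Q$ (and its infinitely near points) resolves the base locus of $|-K_S|$ and exhibits $S$ as a rational elliptic surface; the members of $|-K_S|$ that fail to be log canonical at $Q$ are exactly those whose proper transform after this sequence of blow-ups still passes through the relevant infinitely near point with a bad singularity — in the del Pezzo degree $1$ case this is the classical statement that there is a unique such fibre (the one with a cusp, or worse), and log canonicity of a plane-curve-type singularity at a smooth surface point is governed by its multiplicity sequence. Uniqueness comes from the fact that two distinct members of the pencil share only the point $Q$ with multiplicity one each, so they cannot both be singular there. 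This identifies $T$.

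Finally, for the "moreover" clause, suppose $\mathrm{Supp}(D)$ does \emph{not} contain some irreducible component $T_0$ of $\mathrm{Supp}(T)$. I would apply Lemma~\ref{lemma:convexity} with this $T$: since $(S, T)$ is log canonical at $P$ by the already-cited \cite[Proposition~3.2]{Pa01} (it is a reduced anticanonical divisor, hence semi-log-canonical), and $(S,D)$ is not log canonical at $P$, conclusion~(4) of Lemma~\ref{lemma:convexity} gives an effective anticanonical $\mathbb{Q}$-divisor $D_\mu$ that is still not log canonical at $P$ but whose support omits at least one component of $\mathrm{Supp}(T)$. Iterating (each time the number of components of $T$ in the support strictly drops, by conclusion~(3)), I eventually reach an effective anticanonical $\mathbb{Q}$-divisor not log canonical at $P = Q$ that shares no component with $T$; intersecting it with $T$ then gives $\mathrm{mult}_Q(D') \leqslant D' \cdot T = (-K_S)^2 = 1$, contradicting Lemma~\ref{lemma:mult}. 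Hence $\mathrm{Supp}(D)$ contains every component of $\mathrm{Supp}(T)$.

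The main obstacle I expect is the step that forces $P$ to be the base point $Q$ and simultaneously identifies the unique bad member $T$: this requires a careful case analysis of how an effective anticanonical $\mathbb{Q}$-divisor can concentrate multiplicity greater than $1$ at a point of a surface with $(-K_S)^2 = 1$, using the pencil structure to bound intersection numbers, and one has to handle reducible fibres of the associated elliptic fibration. The convexity bookkeeping in the last paragraph is routine by comparison.
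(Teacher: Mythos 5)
There is a genuine error at the heart of your proposal: you claim that the non-log-canonical point $P$ must be the base point $Q$ of the pencil $|-K_S|$, and that $T$ is ``the unique member of $|-K_S|$ singular at $Q$.'' Both claims are false, and in fact the truth is the opposite. No member of $|-K_S|$ can be singular at the base point: two distinct members $T,T'$ satisfy $T\cdot T'=K_S^2=1$, so if $T$ were singular at $Q$ then $1=T\cdot T'\geqslant \mathrm{mult}_Q(T)\,\mathrm{mult}_Q(T')\geqslant 2$. The paper's argument runs the other way: for \emph{any} member $T$ through $P$, if $(S,T)$ were log canonical at $P$, Lemma~\ref{lemma:convexity} would produce an effective anticanonical $D'$ not log canonical at $P$ with $T\not\subseteq\mathrm{Supp}(D')$ ($T$ is irreducible, since $-K_S\cdot A\geqslant 1$ for every curve $A$ and $(-K_S)^2=1$ --- so there are no reducible members and no ``configuration'' analysis is needed), whence $1=T\cdot D'\geqslant\mathrm{mult}_P(D')>1$. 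So $(S,T)$ is \emph{not} log canonical at $P$, hence $T$ is singular at $P$, hence $P$ is \emph{not} the base point, hence the member through $P$ is unique. Your argument inverts this: you treat the convexity contradiction as ruling out $P\ne Q$, when it actually rules out $(S,C_P)$ being log canonical at $P$ (and the same contradiction, applied to a general member smooth at $Q$, rules out $P=Q$).

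Your final paragraph compounds the problem: you invoke conclusion~(4) of Lemma~\ref{lemma:convexity} ``since $(S,T)$ is log canonical at $P$,'' but the first assertion of the very lemma you are proving is that $(S,T)$ is \emph{not} log canonical at $P$ (a reduced divisor with a cusp is not log canonical there), so that application is unavailable. No convexity is needed for the ``moreover'' clause anyway: since $T$ is irreducible, if $T\not\subseteq\mathrm{Supp}(D)$ then $1=T\cdot D\geqslant\mathrm{mult}_P(D)>1$ by Lemma~\ref{lemma:mult}, and you are done.
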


\begin{proof}
Let $T$ be a curve in $|-K_{S}|$ that passes through the point $P$. Note
that $T$ is irreducible. If the log pair $(S,T)$ is log canonical at $P$, then
it follows from Lemma~\ref{lemma:convexity} that there exists an effective anticanonical $\mathbb{Q}$-divisor 
$D^{\prime}$ on the surface $S$ such that
 the log pair $(S,D^{\prime})$
is not log canonical at $P$ and $\mathrm{Supp}(D^{\prime})$ does not
contain the curve $T$. We then obtain $1=T\cdot
D^{\prime}\geqslant\mathrm{mult}_{P}(D^{\prime})$. This is
impossible by Lemma~\ref{lemma:mult}. Thus, the log pair $(S,T)$ is
not log canonical at~$P$. 

Moreover, \red{by Lemma~\ref{lemma:mult}} the divisor $T$ is singular at the point $P$. Therefore, the point $P$ is not the base point of the pencil $|-K_S|$. Consequently, such a divisor $T$ is unique.

If the curve $T$ is not
contained in $\mathrm{Supp}(D)$, then  we obtain an absurd inequality $1=T\cdot
D\geqslant \red{\mathrm{mult}_{P}(D)>1}$. Therefore, the curve $T$ must be contained in $\mathrm{Supp}(D)$ by Lemma~\ref{lemma:mult}.
\end{proof}

The following is a ready-made Adjunction for our situation. 
\red{\begin{lemma}
\label{lemma:adjunction} 
Suppose that the log pair $(S,D)$ is not
log canonical at the smooth point $P$.
If a component $D_j$ with $a_j\leqslant 1$ is smooth at $P$, then
$$
D_{j}\cdot\left(\sum_{i\ne j}a_{i}D_{i}\right)\geqslant\sum_{i\ne j }a_{i}\left(D_{j}\cdot D_{i}\right)_P>1,%
$$
where $\left(D_{j}\cdot D_{i}\right)_P$ is the local intersection number of $C_i$ and $C_j$ at $P$.
\end{lemma}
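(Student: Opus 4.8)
The plan is to separate the two inequalities. The first one, $D_j\cdot\big(\sum_{i\ne j}a_iD_i\big)\geqslant\sum_{i\ne j}a_i\,\mathrm{mult}_P(D_j\cdot D_i)$, is immediate from the definition of local intersection numbers: since $D_j$ is not a component of $\sum_{i\ne j}a_iD_i$, for each $i\ne j$ the global number $D_j\cdot D_i$ is the sum over all points $Q\in S$ of the local intersection multiplicities $\mathrm{mult}_Q(D_j\cdot D_i)$, each of which is non-negative (and, at the smooth point $P$, a non-negative integer). Discarding every term except $Q=P$ and multiplying by $a_i>0$ yields the asserted inequality.

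The real content is the strict inequality $\sum_{i\ne j}a_i\,\mathrm{mult}_P(D_j\cdot D_i)>1$, which I would obtain from adjunction along the smooth curve $D_j$. Write $D=a_jD_j+\Delta$ with $\Delta=\sum_{i\ne j}a_iD_i$ an effective $\mathbb{Q}$-divisor whose support does not contain $D_j$. Since $a_j\leqslant 1$, raising the coefficient of $D_j$ to $1$ cannot make the pair log canonical, so $(S,D_j+\Delta)$ is still not log canonical at $P$. Because $P$ is a smooth point of $S$ and $D_j$ is smooth at $P$, in a neighbourhood of $P$ the surface adjunction has trivial different, and \cite[Theorem~5.50]{KoMo} (inversion of adjunction) shows that the log pair $\big(D_j,\Delta|_{D_j}\big)$ is not log canonical at $P$. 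As $D_j$ is a smooth curve and $\Delta|_{D_j}$ an effective $\mathbb{Q}$-divisor on it, this means precisely that the coefficient of $P$ in $\Delta|_{D_j}$ exceeds $1$; but that coefficient is exactly $\sum_{i\ne j}a_i\,\mathrm{mult}_P(D_j\cdot D_i)$, which is therefore $>1$.

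The only point needing care is the hypothesis $a_j\leqslant 1$, used in the coefficient reduction above, and the observation that although $S$ is only assumed to have du Val singularities, everything is local around $P$, which is a smooth point of $S$, so the ordinary two-dimensional adjunction (different equal to zero) applies verbatim near $P$. If one wishes to bypass the general inversion-of-adjunction statement, the same inequality can be extracted from a direct computation on the blow-up of $P$ together with Lemma~\ref{lemma:mult}, but invoking \cite[Theorem~5.50]{KoMo} is the most economical route; I do not expect any genuine obstacle beyond keeping track of this reduction.
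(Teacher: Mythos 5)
Your proof is correct and follows essentially the route the paper itself indicates: the paper gives no argument beyond citing \cite[Theorem~5.50]{KoMo} and a characteristic-free reference for the two-dimensional case, and your reduction of the coefficient $a_j$ to $1$ followed by inversion of adjunction along the smooth curve $D_j$ is precisely the standard derivation behind those citations. The two points needing care --- that increasing the coefficient of an effective component preserves non-log-canonicity, and that everything is local at the smooth point $P$ so the different vanishes --- are both handled correctly.
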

\begin{proof}
It immediately follows from \cite[Theorem 5.50]{KoMo}.
\end{proof}}

Let $f\colon \tilde{S}\to S$ be the blow up of  the surface $S$ at the point $P$ with the exceptional divisor $E$ and let
$\tilde{D}$ be the proper transform of $D$ by the blow up $f$. Then
$$
K_{\tilde{S}}+\tilde{D}+\left(\mathrm{mult}_{P}(D)-1\right)E=f^{*}\left(K_{S}+D\right).
$$
The log pair $(S,D)$ is log canonical at  $P$ if
and only if the log pair $(\tilde{S},
\tilde{D}+(\mathrm{mult}_{P}(D)-1)E)$ is log canonical along the curve $E$.

\begin{remark}
\label{remark:log-pull-back} If the log pair $(S,D)$ is not log canonical at
$P$, then there exists a point $Q$ on $E$ at which the log pair
$(\tilde{S}, \tilde{D}+(\mathrm{mult}_{P}(D)-1)E)$ is not log
canonical. Lemma~\ref{lemma:mult} then implies
\red{\begin{equation}\label{equation:greater-than-2}
\mathrm{mult}_{P}(D)+\mathrm{mult}_{Q}(\tilde{D})>2.
\end{equation}}
 If $\mathrm{mult}_{P}(D)\leqslant 2$,
then the log pair $(\tilde{S},
\tilde{D}+(\mathrm{mult}_{P}(D)-1)E)$ is log canonical at every
point of the curve $E$ other than the point $Q$. Indeed, if
 the log pair $(\tilde{S},
\tilde{D}+(\mathrm{mult}_{P}(D)-1)E)$ is not log canonical at another point $O$ on $E$, then Lemma~\ref{lemma:adjunction} generates an absurd inequality 
$$
2\geqslant\mathrm{mult}_{P}(D)=\tilde{D}\cdot E\geqslant\mathrm{mult}_{Q}(\tilde{D})+\mathrm{mult}_{O}(\tilde{D})>2.%
$$
\end{remark}

\begin{notation}From now on, when we have a birational morphism of a surface denoted by a capital roman character with tilde onto a surface, in order to denote the proper transform of a divisor by this morphism, we will add tilde to the same character that denotes the original divisor. For example, in the similar situation as the one preceding Remark~\ref{remark:log-pull-back}, we use $\tilde{D}$ for the proper transform of $D$ by $f$ without mentioning.
\end{notation}

\section{Del Pezzo surfaces of degree $2$}
\label{sec:dP2}

Let $S$ be a del Pezzo surface of degree $2$ with at most two ordinary double points. Then the linear system
$|-K_{S}|$ is base-point-free and induces a double cover
$\pi\colon S\to \mathbb{P}^2$ ramified along a reduced
quartic curve $R\subset\mathbb{P}^2$. Moreover, the curve $R$ has
at most two ordinary double points. In particular, the quartic curve $R$ is irreducible.

\begin{lemma}
\label{lemma:dP2-finitely-many} For an effective anticanonical $\mathbb{Q}$-divisor  $D$ on
$S$, the log pair $(S,D)$ is log canonical outside   finitely many points on
$S$.
\end{lemma}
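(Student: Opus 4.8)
The plan is to show that the non-log-canonical locus of $(S,D)$ is contained in a proper closed subset, hence finite since $S$ is a surface. Write $D = \sum a_i D_i$ with the $D_i$ distinct prime divisors. Because $-K_S$ is nef and big and $D \sim_{\mathbb{Q}} -K_S$, the coefficients $a_i$ satisfy a uniform bound: intersecting $D$ with $-K_S$ gives $\sum a_i (-K_S \cdot D_i) = (-K_S)^2 = 2$, and since each $D_i$ is an irreducible curve with $-K_S \cdot D_i \geqslant 1$ (as $|-K_S|$ is base-point free and $D_i$ is not contracted — here I use that $S$ has only du Val singularities so $-K_S$ is Cartier and the only curves with $-K_S \cdot C = 0$ would be the $(-2)$-curves over the double points, but those have $(-2)$-curve $\cdot (-2)$-curve $= -2 < 0$ and still $-K_S \cdot C = 0$; I must treat these separately, see below), we get $a_i \leqslant 2$ for all $i$.

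First I would dispose of the points where $D$ behaves tamely. If $P \notin \mathrm{Supp}(D)$ then $(S,D)$ is log canonical (in fact terminal) at $P$. If $P \in \mathrm{Supp}(D)$ lies on a single component $D_i$ with $a_i \leqslant 1$ and $D_i$ smooth at $P$, then $(S, a_i D_i)$ is log canonical at $P$ automatically, and multiplicity/adjunction considerations (Lemma~\ref{lemma:adjunction}, Lemma~\ref{lemma:mult}) constrain failure to the finite set of intersection points $D_i \cap D_j$ plus the singular points of the $D_i$. The real content is to bound the locus where $\mathrm{mult}_P(D) > 1$: I claim this locus is finite. Indeed, for any irreducible curve $C$ in $\mathrm{Supp}(D)$, the set of points of $C$ where $\mathrm{mult}_P(D) > 1$ is finite unless $C$ itself appears with coefficient $> 1$ and $\mathrm{mult}_P D \geqslant \mathrm{mult}_P(a_C C) = a_C \cdot \mathrm{mult}_P C$, which is $> 1$ along all of $C$ only when $a_C > 1$. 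So the potential trouble is a component $C$ with $a_C > 1$; but then $(-K_S - a_C C)$ is $\mathbb{Q}$-effective (it equals $D - a_C C \geqslant 0$), giving $(-K_S) \cdot C \geqslant a_C (C^2) + \text{(nonneg)}$, and combined with $a_C(-K_S \cdot C) \leqslant 2$ and $-K_S \cdot C \geqslant 1$ this forces $C^2 \leqslant 0$ and $-K_S \cdot C = 1$, $a_C \leqslant 2$; the curves $C$ with $-K_S \cdot C = 1$ and $C^2 \leqslant 0$ are the finitely many $(-1)$-curves and $(-2)$-curves on $S$. For each such $C$ there is at most one value-interval of $a_C$, and one checks directly (using $D - a_C C \sim_{\mathbb{Q}} -K_S - a_C C$ is effective and its intersection with $C$ is $\geqslant \mathrm{mult}_P(D - a_C C)$ away from finitely many points) that failure of log canonicity still localizes to finitely many points on $C$.

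The main obstacle I anticipate is precisely this last point: ruling out a whole curve $C$ in the non-log-canonical locus. The clean way is to invoke connectedness/inversion-of-adjunction-type reasoning: if $(S,D)$ were non-log-canonical along all of a curve $C$, then $C \subseteq \mathrm{Supp}(D)$ with $a_C \geqslant 1$, and restricting to $C$ (via Lemma~\ref{lemma:adjunction} applied after perturbing the coefficient of $C$ down to $1$ using Lemma~\ref{lemma:convexity}, so that $C$ is an allowed boundary component) yields a degree inequality $\big(\sum_{i \neq C} a_i D_i\big)\cdot C > 1$ that must hold at every point of $C$, i.e. an effective divisor of bounded degree meeting $C$ with multiplicity $> 1$ everywhere — impossible since $\big(\sum_{i\neq C} a_i D_i\big)\cdot C \leqslant (-K_S - C)\cdot C + (\text{correction}) = -K_S\cdot C - C^2$ is finite. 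Chasing the exact numerical bounds for the $(-1)$-curves and $(-2)$-curves (where $-K_S \cdot C - C^2 \in \{2, 3\}$) is the routine calculation I would defer; the structural conclusion is that the non-log-canonical locus meets each component of $\mathrm{Supp}(D)$ in finitely many points and contains no curve, hence is finite.
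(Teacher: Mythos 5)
Your reduction is the right one, and it coincides with the paper's opening move: the pair fails to be log canonical along a whole curve if and only if some component $C$ of $D$ has coefficient $a_C>1$, and intersecting $D$ with $-K_S$ then forces $-K_S\cdot C=1$. The problem is that the decisive case --- $-K_S\cdot C=1$ with $C^2\leqslant 0$ --- is exactly the one you defer as a ``routine calculation,'' and the route you sketch for it does not work. If you lower the coefficient of $C$ to $1$ so that Lemma~\ref{lemma:adjunction} becomes applicable, the pair $\bigl(S,\,C+\sum_{i\neq C}a_iD_i\bigr)$ is log canonical at a general point of $C$ (the other components miss such a point), so no adjunction inequality is available where you need it; and the one global inequality you extract is perfectly consistent with $a_C>1$: for a $(-1)$-curve, $1=C\cdot D=-a_C+C\cdot\Omega$ just gives $C\cdot\Omega=1+a_C$, which is no contradiction. (Also, a $(-2)$-curve has $-K_S\cdot C=0$, not $1$, so your list of candidate curves is slightly off, though this is a side issue.) Nothing in your argument actually bounds $a_C$ for curves of nonpositive self-intersection.

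The missing input is geometric, and it is what the paper's proof supplies. Since $-K_S\cdot C_1=1$, the image $\pi(C_1)$ under the anticanonical double cover is a line in $\mathbb{P}^2$, so there is a residual curve $C_2$ with $C_1+C_2\sim -K_S$ and $\pi(C_1)=\pi(C_2)$; irreducibility of the branch quartic $R$ (this is where the hypothesis that $S$ has at most two ordinary double points enters) guarantees $C_1\ne C_2$, the covering involution gives $C_1^2=C_2^2=-1+\tfrac{k}{2}$ and hence $C_1\cdot C_2=1-C_1^2$, and then the two inequalities $1=C_j\cdot D\geqslant m_1C_1^2+m_2(1-C_1^2)$ and $1\geqslant m_2C_1^2+m_1(1-C_1^2)$ combine to force $m_1\leqslant 1$, the desired contradiction. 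Without producing this second curve (or some equivalent numerical constraint coming from the double-cover structure), the case $C^2\leqslant 0$ remains open, so the proposal has a genuine gap.
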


\begin{proof}
\red{Suppose the converse.} Then we may write  $D=a_1C_1+\Omega$, where
 $C_1$ is an irreducible reduced
curve, $a_1$ is  a positive rational number strictly bigger than $1$
and $\Omega$ is an effective $\mathbb{Q}$-divisor whose support does
not contain the curve $C_1$. Since
$$
2=-K_{S}\cdot D=-K_{S}\cdot\left(a_1C_1+\Omega\right)=-a_1K_{S}\cdot C_1-K_{S}\cdot\Omega \geqslant -a_1K_{S}\cdot C_1>-K_{S}\cdot C_1,%
$$
we have $-K_{S}\cdot C_1=1$. Then $\pi(C_1)$ is a line in
$\mathbb{P}^2$. Thus, there exists an irreducible reduced curve
$C_2$ on $S$ such that $C_1+C_2\sim -K_{S}$ and
$\pi(C_1)=\pi(C_2)$. Note that $C_1=C_2$ if and only if
the line $\pi(C_1)$ is an irreducible component of the branch
curve $R$. Since   $R$ is
irreducible, this is not the case. Thus, we have $C_1\ne
C_2$.

Note that $C_1^{2}=C_2^2$ because $C_1$ and $C_2$ are
interchanged by the biregular involution of $S$ induced by the
double cover $\pi$. Thus, we have
$$
2=\red{(-K_S)^2}=\left(C_1+C_2\right)^2=2C_1^2+2C\cdot C_2,
$$
which implies that $C_1\cdot C_2=1-C_1^2$. Since $C_1$ and
$C_2$ are smooth rational curves, we can easily obtain $
C_1^{2}=C_2^2=-1+\frac{k}{2}$, where $k$ is the number of singular points of $S$ that lie on  $C_1$.

Now we write $D=a_1C_1+a_2C_2+\Gamma$, where $a_2$ is a
non-negative rational number and $\Gamma$ is an effective
$\mathbb{Q}$-divisor whose support contains neither  $C_1$
nor  $C_2$. Then
\[\begin{split}
1 &=C_1\cdot\left(a_1C_1+a_2C_2+\Gamma\right)
=a_1C_1^2+a_2C_1\cdot C_2+C_1\cdot\Gamma \\
& \geqslant  a_1C_1^2+a_2C_1\cdot C_2 =a_1C_1^2+a_2(1-C_1^2),\\
\end{split}
\]
and hence $1\geqslant a_1C_1^2+a_2(1-C_1^2)$. Similarly,
from $C_2\cdot D=1$, we obtain
$1\geqslant a_2C_1^2+a_1(1-C_1^2)$. 
The obtained two inequalities imply that $a_1\leqslant 1$ and $a_2\leqslant 1$ 
since $C_1^2=-1+\frac{k}{2}$, $k=0,1,2$. Since $a_1>1$ by our assumption, \red{this} is a contradiction.
\end{proof}

The following two lemmas can be verified in a similar way  as that of \cite[Lemma~3.5]{Ch07a}. Nevertheless we present their proofs
since we should carefully deal with singular points on $S$ that have been considered neither in \cite{Ch07a}  nor in \cite{KPZ12b}.

\begin{lemma}
\label{lemma:dP2-du-Val} For \red{any} effective anticanonical  $\mathbb{Q}$-divisor  $D$ on
$S$, the log pair $(S,D)$ is log
canonical at every point outside the ramification divisor of  the double cover  $\pi$.
\end{lemma}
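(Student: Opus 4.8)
The plan is to argue by contradiction along the lines of Cheltsov's original argument in \cite[Lemma~3.5]{Ch07a}, exploiting the fact that $R$ is irreducible together with Lemma~\ref{lemma:dP2-finitely-many}. Suppose that $(S,D)$ fails to be log canonical at some point $P$ not lying on the ramification divisor $\mathfrak{R}=\pi^{-1}(R)$. Since $P\notin\mathfrak{R}$, the double cover $\pi$ is \'etale near $P$, and the involution $\tau$ of $S$ sends $P$ to a \emph{different} point $P'=\tau(P)$ with $\pi(P')=\pi(P)$. Because $D$ need not be $\tau$-invariant, I would instead consider the symmetrised divisor, or rather pull back a suitable curve on $\mathbb{P}^2$ through $\pi(P)$; the key geometric input is that $L=\pi^{-1}(\ell)$ for a general line $\ell\subset\mathbb{P}^2$ through $\pi(P)$ is a smooth irreducible anticanonical curve passing through both $P$ and $P'$ with $L^2=2$, and that lines through $\pi(P)$ move in a pencil.

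The main step is to produce a curve with small intersection against $D$ that must, by Lemma~\ref{lemma:mult} or Lemma~\ref{lemma:adjunction}, contain $P$ with too much multiplicity. Concretely, write $D=\sum a_i D_i$. By Lemma~\ref{lemma:mult}, $\mathrm{mult}_P(D)>1$. If some component $D_i$ with $P\in D_i$ has $\pi(D_i)$ a line, then $D_i$ is one of a pair $D_i,D_i'$ of smooth rational curves interchanged by $\tau$ with $D_i\cdot D_i'=1$ (here I use that $R$ is irreducible so $D_i\ne D_i'$, exactly as in the proof of Lemma~\ref{lemma:dP2-finitely-many}); since $P\notin\mathfrak{R}$ we have $P\notin D_i'$, so I can estimate using $D_i$ or $D_i'$ as in Lemma~\ref{lemma:adjunction}. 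If no component through $P$ projects to a line, then every component $D_i$ through $P$ satisfies $-K_S\cdot D_i\geqslant 2$, which combined with $-K_S\cdot D=2$ forces $D$ to have a unique such component with coefficient $a_i<1$ (or to be reducible only outside $P$), and then Lemma~\ref{lemma:adjunction} applies to that component after checking it is smooth at $P$. In either case one contradicts $\mathrm{mult}_P(D)>1$ by a bounded intersection number computation.

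The cleanest route is probably: take the smooth anticanonical curve $L=\pi^{-1}(\ell)$ for a general line $\ell$ through $\pi(P)$, so that $L$ is not a component of $\mathrm{Supp}(D)$ (a general such $L$ avoids the finitely many fixed components, using the pencil of lines through $\pi(P)$). Then $2=L\cdot D=L\cdot\left(\sum a_i D_i\right)\geqslant \mathrm{mult}_P(D)+\mathrm{mult}_{P'}(D)$, where the last inequality uses that $L$ passes through both $P$ and $P'=\tau(P)$ smoothly and that $D$ is effective with $L\not\subset\mathrm{Supp}(D)$. Since $\mathrm{mult}_P(D)>1$ this already gives $\mathrm{mult}_{P'}(D)<1$, which is harmless; to get a genuine contradiction I would instead blow up $P$ as in Remark~\ref{remark:log-pull-back}, obtaining $\mathrm{mult}_P(D)\leqslant L\cdot D\leqslant 2$ and a point $Q\in E$ with $\mathrm{mult}_P(D)+\mathrm{mult}_Q(\tilde D)>2$, then re-examine the intersection of $\tilde L$ (still passing through the preimage of $P$, chosen so its tangent direction hits $Q$ — possible since tangent directions at $P$ of lines through $\pi(P)$ fill all of $\mathbb{P}(T_P S)$) with $\tilde D$ on the blow-up to force $\mathrm{mult}_P(D)+\mathrm{mult}_Q(\tilde D)\leqslant L\cdot D\leqslant 2$, a contradiction.

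The hard part will be handling the components of $D$ that project to lines, since for those the general-line trick degenerates (such a line may be forced to pass through $\pi(P)$ only as a limit, or $L$ may share a component with $D$); this is exactly where irreducibility of $R$ and the splitting $-K_S\sim D_i+D_i'$ with $D_i\cdot D_i'=1$, $D_i^2=D_i'^2=-1+k/2$ from the proof of Lemma~\ref{lemma:dP2-finitely-many} must be used carefully, together with the observation that $P\notin\mathfrak{R}$ prevents $P$ from lying on both $D_i$ and $D_i'$. One must also be mildly careful that $S$ may have up to two ordinary double points, so intersection numbers on $S$ are only $\mathbb{Q}$-valued and the relevant inequalities must be checked to survive; but since $P$ is a smooth point of $S$ and lies off $\mathfrak{R}$, the local analysis at $P$ is unaffected and the global numerics are exactly those recorded in Lemma~\ref{lemma:dP2-finitely-many}.
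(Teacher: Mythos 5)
Your overall skeleton is the same as the paper's: bound $\mathrm{mult}_P(D)\leqslant 2$ using a general member of $|-K_S|$ through $P$, blow up $P$ to get a point $Q\in E$ with $\mathrm{mult}_P(D)+\mathrm{mult}_Q(\tilde D)>2$, and then play off the unique anticanonical curve $C=\pi^{-1}(\ell)$ whose tangent direction at $P$ points at $Q$ (which exists and is smooth at $P$ precisely because $\pi$ is \'etale near $P$). However, two genuine gaps remain. First, once you pin down the tangent direction, $C$ is a \emph{specific} curve, so you can no longer discard it from $\mathrm{Supp}(D)$ by generality; if $C\subseteq\mathrm{Supp}(D)$ the inequality $\tilde C\cdot\tilde D\geqslant\mathrm{mult}_Q(\tilde D)$ is unavailable. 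The paper fixes this by applying Lemma~\ref{lemma:convexity} with $T=C$ (legitimate because $(S,C)$ is log canonical at $P$), replacing $D$ so that at least one irreducible component of $C$ is not in $\mathrm{Supp}(D)$; you never invoke this step. Second, $C$ may be reducible ($\ell$ can be a splitting line for the double cover), and this case is not a side remark: it is roughly half of the paper's proof. There one writes $C=C_1+C_2$ with $C_1\not\subseteq\mathrm{Supp}(D)$, notes that $P$ lies on $C_2$ only (since $C_1\cap C_2\subset\pi^{-1}(R)$, and $1=C_1\cdot D\geqslant\mathrm{mult}_P(D)>1$ would otherwise result), writes $D=nC_2+\Omega$, bounds $n\leqslant 1$ via Lemma~\ref{lemma:dP2-finitely-many}, and then derives contradictory bounds on $\bigl(2-\tfrac{k}{2}\bigr)n$ from $C_1\cdot D=1$ on one side and from Lemma~\ref{lemma:adjunction} applied to $\tilde C_2$ at $Q$ on the other. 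You explicitly defer exactly this case (``the hard part will be\dots'') rather than carry it out, so the proof is incomplete where it matters most.

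A smaller but real error: you assert $D_i\cdot D_i'=1$ for the two halves of a split line while also quoting $D_i^2=-1+\tfrac{k}{2}$; since $(D_i+D_i')^2=2$ these force $D_i\cdot D_i'=2-\tfrac{k}{2}$, which equals $2$ when $S$ is smooth, not $1$. The correct value is precisely what drives the numerical contradiction in the reducible case, so this is not cosmetic.
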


\begin{proof}
Suppose that $(S,D)$ is not log canonical at a point $P$ whose image by $\pi$ lies outside~$R$. 

Let $H$ be a general curve in $|-K_{S}|$ that passes through the
point $P$. Since $\pi(P)\not\in R$, the surface $S$ is smooth at
the point $P$. Then
$$
2=H\cdot
D\geqslant\mathrm{mult}_{P}(H)\mathrm{mult}_{P}(D)\geqslant \mathrm{mult}_{P}(D),%
$$
and hence $\mathrm{mult}_{P}(D)\leqslant 2$.

Let $f\colon \tilde{S}\to S$ be the blow up of the surface $S$ at $P$.
We have
$$
K_{\tilde{S}}+\tilde{D}+\left(\mathrm{mult}_{P}(D)-1\right)E=f^{*}\left(K_{S}+D\right),
$$
where  $E$ is the exceptional curve of the blow up $f$.
Then, Remark~\ref{remark:log-pull-back} gives a unique point $Q$ on $E$ such that 
 the log pair $(\tilde{S},
\tilde{D}+\left(\mathrm{mult}_{P}(D)-1\right)E)$ is not log canonical
at $Q$ on  $E$.

Since $\pi(P)\not\in R$, there exists a unique reduced  but possibly
reducible curve $C\in|-K_{S}|$ such that  $C$
passes through  $P$ and its proper transform $\tilde{C}$  passes through the point $Q$. Note that  $C$ is smooth at  $P$.
 Since $(S,C)$ is log canonical at $P$,  Lemma~\ref{lemma:convexity} enables us to assume that the support of $D$ does not contain at least one irreducible component of  $C$.

If the curve $C$ is irreducible, then
$$
2-\mathrm{mult}_{P}(D)=2-\mathrm{mult}_{P}(C)\mathrm{mult}_{P}(D)=\tilde{C}\cdot\tilde{D}\geqslant\mathrm{mult}_{Q}(\tilde{C})\mathrm{mult}_{Q}(\tilde{D})=\mathrm{mult}_{Q}(\tilde{D}).
$$
This contradicts (\ref{equation:greater-than-2}).
Thus, the curve $C$ must be reducible.

We may then write
$C=C_1+C_2$, where $C_1$ and $C_2$ are irreducible smooth curves
that intersect  at two points. Without loss of generality we may assume that the curve $C_1$ is not
contained in the support of $D$.  The point $P$ must belong to
$C_2$: otherwise we would have
\[1=D\cdot C_1\geqslant \mult_{P}(D)>1.\]
We put $D=aC_2+\Omega$, where $a$ is a non-negative rational
number and $\Omega$ is an effective $\mathbb{Q}$-divisor whose
support does not contain the curve $C_2$. Then
$$1=C_1\cdot D=(2-\frac{1}{2}k)a+C_1\cdot\Omega\geqslant (2-\frac{1}{2}k)a,$$
where $k$ is the number of singular points of $S$ on $C_1$.
 On the other hand, the log pair $(\tilde{S},
a\tilde{C}_2+\tilde\Omega+(\mult_P(D)-1)E)$ is not log canonical at $Q$,
where  we have $a\leqslant 1$ by Lemma~\ref{lemma:dP2-finitely-many}. We then obtain
$$(2-\frac{1}{2}k)a=\tilde{C}_2\cdot (\tilde{\Omega}+(\mult_P(D)-1)E)>1$$
from Lemma~\ref{lemma:adjunction}.
This is a contradiction. 
\end{proof}

\begin{lemma}
\label{lemma:dP2-double-points} For  a smooth point $P$ of $S$ with $\pi(P)\in R$, let $T_P$ be the
unique divisor in $|-K_{S}|$ that is singular at the point $P$.
If the log pair $(S,T_P)$ is log canonical at $P$, then for \red{any} effective anticanonical $\mathbb{Q}$-divisor  $D$ on
$S$  the log pair $(S,D)$ is log canonical at~$P$.
\end{lemma}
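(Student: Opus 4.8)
The plan is to argue by contradiction in close analogy with the proof of Lemma~\ref{lemma:dP2-du-Val}, but now using the divisor $T_P$ rather than a generic anticanonical curve, and using the convexity trick of Lemma~\ref{lemma:convexity} to peel $T_P$ off the support of $D$. So suppose $(S,D)$ is not log canonical at $P$, where $\pi(P)\in R$ and $S$ is smooth at $P$. Since $T_P$ is the unique member of $|-K_S|$ singular at $P$ and $(S,T_P)$ is log canonical at $P$ by hypothesis, Lemma~\ref{lemma:convexity} lets me replace $D$ by an effective anticanonical $\mathbb{Q}$-divisor, still not log canonical at $P$, whose support omits at least one irreducible component of $T_P$. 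I would keep the name $D$ for this new divisor. First I would bound the multiplicity: taking $H\in|-K_S|$ a general member through $P$ gives $2 = H\cdot D \ge \mult_P(D)$, so $\mult_P(D)\le 2$, and blowing up $f\colon\tilde S\to S$ at $P$ with exceptional curve $E$, Remark~\ref{remark:log-pull-back} produces a point $Q\in E$ with $(\tilde S,\tilde D + (\mult_P(D)-1)E)$ not log canonical at $Q$, log canonical elsewhere on $E$, and $\mult_P(D)+\mult_Q(\tilde D)>2$.

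Next I would split into cases according to the shape of $T_P$. Because $T_P$ is a hyperplane section of the double cover (a curve in $|-K_S|$ singular at $P$ with $\pi(P)\in R$), its possibilities are limited: either $T_P$ is an irreducible curve with a node or cusp at $P$, or $T_P=C_1+C_2$ with $C_1,C_2$ the two $(-1)$-type (or lower) curves lying over a line of $\mathbb{P}^2$ meeting at $P$, or $T_P=2C$ for a curve $C$ with $C^2$ as computed in Lemma~\ref{lemma:dP2-finitely-many}. In the reducible cases I can assume (after the convexity reduction) that one component, say $C_1$, is not in $\mathrm{Supp}(D)$; then $P\in C_1$ would force $1=C_1\cdot D\ge\mult_P(D)>1$, so $P\notin C_1$, and writing $D = nC_2 + \Omega$ with $\Omega$ not containing $C_2$ I get $1 = C_1\cdot D \ge n\,(C_1\cdot C_2)$, hence a bound on $n$; the exact value of $C_1\cdot C_2$ (which is $2-\tfrac12 k$ or similar, depending on how many double points of $S$ lie on $C_1$) is exactly the quantity computed in the previous lemmas. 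Using Lemma~\ref{lemma:dP2-finitely-many} to get $n\le 1$ and then Lemma~\ref{lemma:adjunction} applied to $\tilde C_2$ on $\tilde S$ yields $(C_1\cdot C_2)\,n = \tilde C_2\cdot(\tilde\Omega + (\mult_P(D)-1)E) > 1$, contradicting the bound on $n$. The irreducible case is handled by the inequality $2 - \mult_P(D) = \tilde T_P\cdot \tilde D \ge \mult_Q(\tilde T_P)\,\mult_Q(\tilde D)$: since $T_P$ is singular at $P$ we have $\mult_P(T_P)=2$ so $\tilde T_P$ passes through the preimage of $P$; I need to check that after blowing up the relevant infinitely-near structure this forces $\mult_Q(\tilde D)\le 2-\mult_P(D)$, contradicting $\mult_P(D)+\mult_Q(\tilde D)>2$. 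Here one must be slightly careful about whether $Q$ lies on $\tilde T_P$: if not, then the analysis of $E$ alone already gives the contradiction, and if so, the intersection estimate closes the case.

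The main obstacle I anticipate is the bookkeeping around the singular points of $S$ on the curves $C_1$, $C_2$ (or $C$) and the corresponding self-intersection numbers, exactly as in Lemma~\ref{lemma:dP2-finitely-many}: the constant $2-\tfrac12 k$ has to be kept track of correctly so that the final inequality is genuinely violated for each allowed value $k\in\{0,1,2\}$. A secondary subtlety is ensuring the convexity reduction via Lemma~\ref{lemma:convexity} is legitimate — it requires $(S,T_P)$ log canonical at $P$, which is precisely the hypothesis, and that $\mathrm{Supp}(D)$ already contains the components of $T_P$ (otherwise there is nothing to subtract, and the intersection-number argument against $D$ directly applies), so the two situations must be separated at the outset. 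Once these are organized, each branch reduces to one application of Lemma~\ref{lemma:adjunction} or a direct intersection bound, matching the structure of the proof of Lemma~\ref{lemma:dP2-du-Val}.
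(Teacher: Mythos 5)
Your overall strategy (reduce via Lemma~\ref{lemma:convexity} to the case where some component of $T_P$ is missing from $\mathrm{Supp}(D)$, then contradict Lemma~\ref{lemma:mult} with an intersection bound) is exactly the paper's strategy, but you miss the one observation that makes the argument terminate, and as a result you spend most of your effort on a branch that does not exist. In the reducible case $T_P=C_1+C_2$ the components are smooth curves, so the singularity of $T_P$ at $P$ can only come from $P$ being an intersection point of $C_1$ and $C_2$; in particular $P$ lies on \emph{both} components, hence on whichever component $C_1$ you have arranged to lie outside $\mathrm{Supp}(D)$. The inequality $1=C_1\cdot D\geqslant\mult_P(C_1)\mult_P(D)=\mult_P(D)>1$ is therefore not a reason to conclude $P\notin C_1$ --- it is the final contradiction, and the proof is over at that point. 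Your continuation under the assumption $P\notin C_1$ (the decomposition $D=nC_2+\Omega$, the bound on $n$, the adjunction step on $\tilde{S}$) analyses a vacuous case, and is moreover not airtight as written: the identity $(C_1\cdot C_2)\,n=\tilde{C}_2\cdot\bigl(\tilde{\Omega}+(\mult_P(D)-1)E\bigr)$ is not justified, and you never verify that $Q$ lies on $\tilde{C}_2$, which Lemma~\ref{lemma:adjunction} requires. None of the blow-up machinery is needed for this lemma; that apparatus belongs to Lemma~\ref{lemma:dP2-du-Val}, where the relevant anticanonical curve is smooth at $P$.

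Two smaller points. In the irreducible case the direct computation already suffices: since $T_P\not\subset\mathrm{Supp}(D)$ and $\mult_P(T_P)=2$, one has $2=T_P\cdot D\geqslant 2\mult_P(D)>2$. Your blown-up version contains an arithmetic slip, since $\tilde{T}_P\cdot\tilde{D}=T_P\cdot D-\mult_P(T_P)\mult_P(D)=2-2\mult_P(D)$, not $2-\mult_P(D)$. Finally, the case $T_P=2C$ that you allow for cannot occur: $2C\sim -K_S$ would force $\pi(C)$ to be a line contained in the branch quartic $R$, and $R$ is irreducible.
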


\begin{proof}
Suppose that $(S,D)$ is not log canonical at the point $P$.  Applying Lemma~\ref{lemma:convexity} to the log
pairs $(S,D)$ and $(S,T_P)$, we may assume that $\mathrm{Supp}(D)$
does not contain at least one irreducible component of 
$T_P$. Thus, if the divisor $T_P$ is irreducible, then Lemma~\ref{lemma:mult} gives an absurd inequality 
$$
2=T_P\cdot
D\geqslant\mathrm{mult}_{P}(T_{P})\mathrm{mult}_{P}(D)\geqslant
2\mathrm{mult}_{P}(D)>2
$$
since $T_P$ is singular at 
$P$. Hence,  $T_P$ must be reducible.

We may then write $T_P=T_1+T_2$, where $T_1$ and $T_2$
are smooth rational curves. Note that the point $P$ is one of the intersection points of 
$T_1$ and $T_2$. Without loss of generality, we may
assume that the curve $T_1$ is not contained in the support of $D$. Then
$$
1=T_1\cdot
D\geqslant\mathrm{mult}_{P}(T_{1})\mathrm{mult}_{P}(D)=\mathrm{mult}_{P}(D)>1
$$
by Lemma~\ref{lemma:mult}. The obtained contradiction completes
the proof.
\end{proof}

Lemmas~\ref{lemma:dP2-du-Val} and
\ref{lemma:dP2-double-points} prove the following result. 

\begin{lemma}
\label{lemma:dP2} Suppose that the del Pezzo surface $S$ \red{of degree $2$} is smooth. Let $D$ be an effective anticanonical $\mathbb{Q}$-divisor on $S$. Suppose that the log pair $(S,D)$ is
not log canonical at a point $P$. Then there exists a unique divisor
$T\in |-K_{S}|$ such that $(S,T)$ is not log canonical at $P$. 
The support of the divisor
$D$ contains all the irreducible components of $T$.
The divisor $T$ is either an irreducible rational curve with a cusp at $P$ or a union of two $(-1)$-curves meeting tangentially at~$P$.
\end{lemma}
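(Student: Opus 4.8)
The plan is to deduce Lemma~\ref{lemma:dP2} from the two preceding lemmas by a case analysis on the point $P$, exactly as indicated by the remark that ``Lemmas~\ref{lemma:dP2-du-Val} and~\ref{lemma:dP2-double-points} prove the following result.'' First I would observe that since $S$ is smooth of degree $2$, the anticanonical map $\pi\colon S\to\mathbb{P}^2$ is a double cover branched over a smooth quartic $R$. If the log pair $(S,D)$ is not log canonical at $P$, then by Lemma~\ref{lemma:dP2-du-Val} the point $\pi(P)$ must lie on $R$. Hence $P$ is one of the $56$ points of $S$ lying over $R$, and through each such point there is a unique divisor $T_P\in|-K_S|$ singular at $P$ (namely the preimage of the line tangent to $R$ at $\pi(P)$). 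By Lemma~\ref{lemma:dP2-double-points}, if $(S,T_P)$ were log canonical at $P$ then $(S,D)$ would be log canonical at $P$, a contradiction; therefore $(S,T_P)$ is not log canonical at $P$. This gives a divisor $T=T_P\in|-K_S|$ with $(S,T)$ not log canonical at $P$.

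Next I would establish uniqueness. Any divisor $T'\in|-K_S|$ with $(S,T')$ not log canonical at $P$ must, by Lemma~\ref{lemma:mult}, satisfy $\mult_P(T')>1$, so $T'$ is singular at $P$; since $|-K_S|$ is base-point free the pencil of anticanonical divisors through $P$ cuts out a one-dimensional linear system on the blow-up, and only one member can be singular at $P$ — concretely, a member of $|-K_S|$ singular at $P$ pulls back from a line through $\pi(P)$ tangent to $R$ at $\pi(P)$, and such a tangent line is unique. Hence $T_P$ is the unique such divisor. For the containment statement, suppose $\mathrm{Supp}(D)$ omits some irreducible component of $T$. If $T$ is irreducible, then $2=T\cdot D\geqslant \mult_P(T)\mult_P(D)\geqslant 2\mult_P(D)>2$, absurd. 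If $T=T_1+T_2$ with $T_1,T_2$ smooth rational curves and, say, $T_1\not\subset\mathrm{Supp}(D)$, then $P\in T_1$ (as $P$ is a singular point of $T$) gives $1=T_1\cdot D\geqslant \mult_P(D)>1$, again absurd. So $\mathrm{Supp}(D)$ contains every component of $T$.

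Finally I would identify $T$ geometrically. The divisor $T=T_P$ is the preimage under $\pi$ of the line $\ell$ tangent to $R$ at the point $\pi(P)$. A smooth conic intersects $R$ in $8$ points; a tangent line $\ell$ meets $R$ at $\pi(P)$ with multiplicity $\geqslant 2$. If $\ell$ meets $R$ transversally elsewhere (ordinary tangent line, tangency of order exactly $2$), then $\pi^{-1}(\ell)$ is a smooth rational curve with a single node or cusp over $\pi(P)$; checking which one occurs and that it must be a cusp for $(S,T_P)$ to fail log canonicity at $P$ is the content of the last claim. If $\ell$ is a bitangent or has higher-order contact, $\pi^{-1}(\ell)$ splits as $T_1+T_2$ with $T_1\cdot T_2\geqslant 2$ concentrated at $P$ (i.e.\ two $-1$-curves meeting tangentially). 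I expect the main obstacle to be this last local computation: pinning down exactly when $\pi^{-1}(\ell)$ is irreducible-with-a-cusp versus a tangential union of two $-1$-curves, and ruling out the nodal case by showing a node does not violate log canonicity (the pair $(S,\text{nodal curve})$ is log canonical), so that the failure of log canonicity of $(S,T_P)$ at $P$ forces one of the two listed singularity types. This is a finite, explicit check using the double-cover structure and the blow-up formula preceding Remark~\ref{remark:log-pull-back}.
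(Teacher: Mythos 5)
Your argument is essentially the paper's proof: combine Lemma~\ref{lemma:dP2-du-Val} (to force $\pi(P)\in R$) with Lemma~\ref{lemma:dP2-double-points} (to get non-log-canonicity of $(S,T_P)$ and, by rerunning its intersection-number computation, the containment of components), and then classify the singularity type of $T_P$. The only divergence is that the paper cites \cite[Proposition~3.2]{Pa01} for the final cusp/tacnode dichotomy while you rederive it from the double-cover geometry; your derivation is sound in outline, though note the small slip that for a bitangent line the two components of $\pi^{-1}(\ell)$ meet transversally at \emph{two distinct} points (nodes, hence log canonical and excluded), not with intersection concentrated at $P$.
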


\begin{proof}
By Lemma~\ref{lemma:dP2-du-Val}, the point $\pi(P)$ must lie on $R$. Then there exists
a unique curve $T\in |-K_{S}|$ that is singular at the point $P$.
By Lemma~\ref{lemma:dP2-double-points}, the log pair $(S,T)$ is
not log canonical at $P$.

Suppose that the support of $D$ does not contain an
irreducible component of $T$. Then the proof of 
Lemma~\ref{lemma:dP2-double-points} works verbatim to derive a contradiction. 

The last assertion immediately follows from \cite[Proposition~3.2]{Pa01}.
\end{proof}
Lemma~\ref{lemma:dP2} shows that Theorem~\ref{theorem:technical} holds for a smooth del Pezzo surface of degree~$2$.

\section{Cubic surfaces}
\label{sec:cubic-surfaces}

In the present  section we prove Theorem~\ref{theorem:technical}.
 Lemma~\ref{lemma:dP1} and Lemma~\ref{lemma:dP2} show that Theorem~\ref{theorem:technical}  holds for del Pezzo surfaces of degrees $1$ and $2$, respectively.  Thus, to complete the proof,  we let $S$ be a 
smooth cubic surface in $\mathbb{P}^3$ and let $D$ be an effective anticanonical
$\mathbb{Q}$-divisor on  $S$.

\begin{lemma}
\label{lemma:cubic-finitely-many-points} The log pair $(S,D)$ is
log canonical outside finitely many points.
\end{lemma}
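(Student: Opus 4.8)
The plan is to argue by contradiction. Since the non-log canonical locus of $(S,D)$ is a closed subset of $S$, if it is not finite it contains an irreducible curve; let $Z$ be one, taken with its reduced structure. At a general point $P$ of $Z$ the surface $S$ is smooth, $Z$ is smooth, and $P$ avoids the support of $D-\mathrm{coeff}_Z(D)\,Z$; since $(S,D)$ is not log canonical at $P$, the log pair $(S,\mathrm{coeff}_Z(D)\,Z)$ is not log canonical at $P$, which forces $m:=\mathrm{coeff}_Z(D)=\mult_P(D)>1$ (one may also quote Lemma~\ref{lemma:mult}). Because $-K_S$ is ample and $D-mZ$ is effective,
$$
3=\left(-K_{S}\right)\cdot D\geqslant m\left(\left(-K_{S}\right)\cdot Z\right)>\left(-K_{S}\right)\cdot Z,
$$
so $(-K_{S})\cdot Z\in\{1,2\}$; equivalently, in the anticanonical embedding $S\subset\mathbb{P}^{3}$ the curve $Z$ is either a line or an irreducible (hence smooth plane) conic. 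It then suffices to rule out both cases.

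If $Z$ is a line $L$, I would use the pencil of planes in $\mathbb{P}^{3}$ through $L$: it cuts out on $S$ a pencil of conics $\{Q_{t}\}$ with $Q_{t}\sim -K_{S}-L$, so that $Q_{t}^{2}=0$, $(-K_{S})\cdot Q_{t}=2$ and $L\cdot Q_{t}=2$. Only finitely many members of this pencil are reducible or non-reduced, so one may choose $Q_{t}$ irreducible and not a component of $\Omega:=D-mL$. Then $\Omega\cdot Q_{t}\geqslant 0$, and
$$
2=\left(-K_{S}\right)\cdot Q_{t}=D\cdot Q_{t}=m\left(L\cdot Q_{t}\right)+\Omega\cdot Q_{t}\geqslant 2m,
$$
contradicting $m>1$. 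If instead $Z$ is an irreducible conic $C$, then $C^{2}=0$ by adjunction; letting $L$ be the residual line with $C+L\sim -K_{S}$, one has $C\cdot L=2$ and $L^{2}=-1$. Write $D=mC+\ell L+\Gamma$ with $\ell\geqslant 0$ and $\Gamma$ effective containing neither $C$ nor $L$. Intersecting with $C$ gives $2=C\cdot D=2\ell+C\cdot\Gamma\geqslant 2\ell$, hence $\ell\leqslant 1$; intersecting with $L$ gives $1=L\cdot D=2m-\ell+L\cdot\Gamma\geqslant 2m-\ell\geqslant 2m-1$, hence $m\leqslant 1$, again a contradiction. Therefore $\mathrm{Nlc}(S,D)$ contains no curve and is finite.

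The step I expect to be the main obstacle is the conic case. Unlike a line, a smooth conic $C$ on $S$ with $C^{2}=0$ moves in a base-point-free pencil (a fibre of a conic bundle structure), and a general member $C_{t}$ of that pencil satisfies $C\cdot C_{t}=0$, so intersecting $D$ with $C_{t}$ gives no bound on $m$; one is forced to introduce the residual line $L$ and run the two-step intersection estimate above. For the line case the residual conics genuinely meet $L$, so a single intersection suffices there, and the only thing to watch is the elementary genericity argument allowing us to avoid the finitely many bad members of the pencil and the finitely many components of $\Omega$.
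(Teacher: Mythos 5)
Your proof is correct and follows essentially the same route as the paper: bound $(-K_S)\cdot Z$ to reduce to the line and conic cases, kill the line case by intersecting with a general residual conic, and kill the conic case by the two intersection inequalities against $C$ and the residual line $L$. The only difference is cosmetic (you justify $m>1$ via the generic point of the non-log-canonical curve, which the paper leaves implicit).
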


\begin{proof}
 Suppose not. Then we may write  $D=aC+\Omega$, where
 $C$ is an irreducible curve, $a$ is a  positive rational number   strictly bigger than $1$ and $\Omega$ is an effective $\mathbb{Q}$-divisor  whose
support does not contain the curve $C$. Then
$$
3=-K_{S}\cdot\left(aC+\Omega\right)=-aK_{S}\cdot C-K_{S}\cdot\Omega \geqslant -aK_{S}\cdot C>-K_{S}\cdot C.%
$$
This implies that  $C$ is either a line or an irreducible conic.

Suppose that $C$ is a line. Let $Z$ be a general
irreducible conic on $S$ such that $Z+C\sim -K_{S}$.  Since $Z$ is general, it is not contained in the support of $D$. 
We then obtain $$
2=Z\cdot D=Z\cdot\left(aC+\Omega\right)=2a+Z\cdot\Omega\geqslant 2a.%
$$
\red{This} contradicts our assumption.

Suppose that $C$ is an irreducible  conic. Then there exists a unique line
$L$ on $S$ such that $L+C\sim -K_{S}$. Write $D=aC+bL+\Gamma$,
where $b$ is a non-negative rational number and $\Gamma$ is an
effective $\mathbb{Q}$-divisor whose support contains neither the
conic $C$ nor the line $L$. Then
$$
1=L\cdot D=L\cdot\left(aC+bL+\Gamma\right)=2a-b+L\cdot\Gamma\geqslant 2a-b.%
$$
On the other hand,
$$
2=C\cdot D=C\cdot\left(aC+bL+\Gamma\right)=2b+C\cdot\Gamma\geqslant 2b.%
$$
Combining these two inequalities, we obtain $a\leqslant
 1$. This contradicts our
assumption too.
\end{proof}

For a point $P$ on  $S$, let $T_P$ be the tangent hyperplane
section of the surface $S$  at the point $P$. 
This is the unique anticanonical divisor that is singular at $P$.
 The curve $T_P$ is reduced but
it may be reducible.

In order to prove Theorem~\ref{theorem:technical} we must show that
$(S,D)$ is log canonical at $P$ provided that one of the following
two conditions is satisfied: 
\begin{itemize}
\item the log pair $(S,T_P)$ is log
canonical at $P$; 
\item the log pair $(S,T_P)$ is not log canonical at
$P$ but $\mathrm{Supp}(D)$ does not contain at least one
irreducible component of  $T_P$.
\end{itemize}

 The log pair $(S,T_P)$ is
log canonical at $P$ if and only if the point $P$ is an ordinary double
point of  $T_P$ \red{(see \cite[Proposition~3.2]{Pa01})}. Thus, $(S,T_P)$ is log canonical at $P$
if and only if $T_P$ is one of the following curves: an
irreducible cubic curve with one ordinary double point, a union of
three coplanar lines that do not intersect at one point, a union
of a line and a conic that intersect transversally at two points.

Overall, we must consider the following cases:
\begin{enumerate}
\item[($\mathrm{a}$)] $T_P$  is a union of three lines that intersect at  $P$ (Eckardt point);%
\item[($\mathrm{b}$)] $T_P$ is a union of a line and a conic that intersect tangentially at  $P$;
\item[($\mathrm{c}$)] $T_P$ is an irreducible cubic curve with a cusp at  $P$;
\item[($\mathrm{d}$)] $T_P$ is an irreducible cubic curve with one ordinary double point;
\item[($\mathrm{e}$)] $T_P$ is a union of three coplanar lines that do not intersect at one point;
\item[($\mathrm{f}$)] $T_P$ is a union of a line and a conic that intersect transversally at two points.%
\end{enumerate}

We consider these cases one by one in separate lemmas, i.e.,
Lemmas~\ref{lemma:cubic-Eckard},~\ref{lemma:cubic-tacnodal}, ~\ref{lemma:cubic-cusp},~\ref{lemma:cubic-ODP}, ~\ref{lemma:cubic-triangle} and
\ref{lemma:cubic-nodal}. We however present
the detailed proof of Lemma~\ref{lemma:cubic-triangle}  in
Section~\ref{sec:three-lines} to improve the readability of this
section.  These
lemmas altogether imply Theorem~\ref{theorem:technical}. 

For simplicity, put $m= \mathrm{mult}_{P}(D)$.
\begin{lemma}\label{lemma:cubic-line} 
\red{If the log pair $(S, D)$ is not log canonical at the point $P$, then the support of $D$ contains all the lines on $S$ passing through $P$.}
\end{lemma}
\begin{proof}
Let $L$ be a line passing through the point $P$ that is not contained in the support of $D$.
Then the inequality $1=L\cdot
D\geqslant m$  implies that the log pair $(S, D)$ is log canonical at  $P$  by
Lemma~\ref{lemma:mult}.
\end{proof}

\begin{lemma}[{\cite[Lemma~4.13]{KPZ11a}}]
\label{lemma:cubic-Eckard} Suppose that  the tangent hyperplane section $T_P$  consists of three
lines intersecting at the point $P$. If the support of $D$  does not contain at least one of the three lines, then the log pair $(S,D)$ is log canonical at the
point $P$.
\end{lemma}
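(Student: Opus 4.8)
The plan is to reduce this instantly to Lemma~\ref{lemma:cubic-line}. Since $P$ is an Eckardt point, the curve $T_P$ is a union of three lines $L_1$, $L_2$, $L_3$ that all pass through $P$. The hypothesis says that $\mathrm{Supp}(D)$ omits at least one of these three lines, so after relabelling I may assume $L_1\not\subset\mathrm{Supp}(D)$. Then $L_1$ is a line through the point $P$ that is not contained in $\mathrm{Supp}(D)$, and Lemma~\ref{lemma:cubic-line} applies verbatim to conclude that the log pair $(S,D)$ is log canonical at $P$.

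For completeness, I would recall the one-line computation behind Lemma~\ref{lemma:cubic-line}: since $L_1$ is a line on the cubic surface $S$ and $D\sim_{\mathbb{Q}}-K_S$, we have $L_1\cdot D=-K_S\cdot L_1=1$; as $L_1$ is smooth at $P$ and $L_1\not\subset\mathrm{Supp}(D)$, this forces $\mathrm{mult}_{P}(D)\leqslant L_1\cdot D=1$, and then Lemma~\ref{lemma:mult} rules out $(S,D)$ being non-log-canonical at $P$.

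There is essentially no obstacle in this case: the Eckardt hypothesis is precisely what guarantees that the component of $T_P$ missing from $\mathrm{Supp}(D)$ is a line through $P$, which is all that Lemma~\ref{lemma:cubic-line} requires. The genuine difficulties of Theorem~\ref{theorem:technical} are concentrated in the remaining configurations (b)--(f) of $T_P$, where no line through $P$ need lie outside $\mathrm{Supp}(D)$ and one must instead analyse the intersection theory of the components of $T_P$ together with the multiplicities of $D$ along them.
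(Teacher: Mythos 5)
Your proof is correct and is exactly the paper's argument: the paper also deduces this lemma immediately from Lemma~\ref{lemma:cubic-line}, since the Eckardt hypothesis ensures the omitted component of $T_P$ is a line through $P$.
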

\begin{proof}
It immediately follows from Lemma~\ref{lemma:cubic-line}.\end{proof}

From now on, let $f:\tilde{S}\to S$ be the blow up of the cubic surface $S$ at the point $P$.  In addition, let   $E$ be the exceptional curve of $f$.  We then have 
\begin{equation*}
K_{\tilde{S}}+\tilde{D}+\left(m-1\right)E=f^{*}\left(K_{S}+D\right).%
\end{equation*}
Note that the log pair $(S, D)$ is log canonical at  $P$ if and only if the log pair $$(\tilde{S}, \tilde{D}+\left(m-1\right)E)$$ is log canonical along the exceptional divisor $E$.

\begin{remark}\label{remark:singular-del-Pezzo}
If there is a line passing through $P$, then the surface $\tilde{S}$ is a weak del Pezzo surface of degree $2$, i.e.,  $K_{\tilde{S}}^2=2$ and $-K_{\tilde{S}}$ is nef and big. The proper transforms of the lines passing though $P$ will be $(-2)$-curves on $\tilde{S}$. All the $(-2)$-curves on  $\tilde{S}$ are disjoint each other and they come from the lines passing through $P$ on $S$. By contracting these $(-2)$-curves we obtain a birational morphism  $g:\tilde{S}\to \bar{S}$.
Then $\bar{S}$ is a del Pezzo surface of degree $2$ with 
ordinary double points. In particular,
the linear system $|-K_{\bar{S}}|$ 
induces a double cover $\pi\colon \bar{S}\to \mathbb{P}^2$ 
ramified along a  quartic curve
$R\subset\mathbb{P}^2$. 
The $(-2)$-curves on $\tilde{S}$ are contracted to the ordinary double points on $\bar{S}$. Therefore,  the number of the ordinary double points on $\bar{S}$ is given by the number of lines passing through $P$ on $S$.  Since we have at most two lines passing though $P$, the surface $\bar{S}$ has at most two ordinary double points, and hence the quartic curve $R$ must be an  irreducible curve with at most two ordinary double points.
\end{remark}

\begin{lemma}
\label{lemma:cubic-tacnodal} Suppose that the tangent hyperplane section $T_P$  consists of a
line and a conic intersecting tangentially at the point $P$.
If the support of $D$  does not contain \red{both  the line and the conic}, then the log pair $(S,D)$ is log canonical at the
point $P$.
\end{lemma}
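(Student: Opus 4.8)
The plan is to assume that $(S,D)$ is not log canonical at $P$ and derive a contradiction. First I would normalise $D$. Write $T_{P}=L+C$, where $L$ is the line and $C$ the irreducible conic meeting $L$ tangentially at $P$, so $L^{2}=-1$, $C^{2}=0$, $C\cdot L=2$ and $(C\cdot L)_{P}=2$. By Lemma~\ref{lemma:cubic-line} we may assume $L\subseteq\mathrm{Supp}(D)$, and since $C\not\subseteq\mathrm{Supp}(D)$ we may write $D=aL+\Omega$ with $a>0$ and $L,C\not\subseteq\mathrm{Supp}(\Omega)$. Intersecting with $C$ and $L$ gives $C\cdot\Omega=2-2a\geqslant 0$ and $L\cdot\Omega=1+a$, so $a\leqslant 1$; if $a=1$ then $C\cdot\Omega=0$ forces $\mathrm{Supp}(\Omega)\cap C=\varnothing$, hence $P\notin\mathrm{Supp}(\Omega)$ and near $P$ the pair $(S,D)$ coincides with $(S,L)$, which is log canonical — a contradiction. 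Thus $0<a<1$; moreover $\mathrm{mult}_{P}(\Omega)\leqslant C\cdot\Omega=2-2a$ since $C$ is smooth at $P$ and not a component of $\Omega$, so $m:=\mathrm{mult}_{P}(D)=a+\mathrm{mult}_{P}(\Omega)\leqslant 2-a<2$, while $m>1$ by Lemma~\ref{lemma:mult}.

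Next I would pass to a del Pezzo surface of degree two. Because $L$ and $C$ have the same tangent line at $P$, their proper transforms $\tilde L$ and $\tilde C$ meet $E$ in one common point $Q_{0}$; also $\tilde L^{2}=-2$ and $\tilde L\cdot\tilde C=1$. The surface $\tilde S$ is a weak del Pezzo surface of degree $2$ whose only $(-2)$-curve is $\tilde L$, so contracting $\tilde L$ gives a crepant morphism $\sigma\colon\tilde S\to\bar S$ onto a del Pezzo surface $\bar S$ of degree $2$ with a single ordinary double point $O=\sigma(\tilde L)$, exactly the setting of Section~\ref{sec:dP2}. Put $\Delta=\tilde D+(m-1)E=a\tilde L+\tilde\Omega+(m-1)E$, so that $K_{\tilde S}+\Delta=f^{*}(K_{S}+D)$ and $(S,D)$ is log canonical at $P$ if and only if $(\tilde S,\Delta)$ is log canonical along $E$. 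A short computation gives $\Delta\cdot\tilde L=0$, hence $\sigma^{*}\sigma_{*}\Delta=\Delta$; therefore $\bar D:=\sigma_{*}\Delta=\sigma_{*}\tilde\Omega+(m-1)\bar E$ (with $\bar E=\sigma(E)$) is an effective anticanonical $\mathbb{Q}$-divisor on $\bar S$, and $K_{\tilde S}+\Delta=\sigma^{*}(K_{\bar S}+\bar D)$, so the pairs $(\tilde S,\Delta)$ and $(\bar S,\bar D)$ have the same discrepancies. It thus remains to prove that $(\tilde S,\Delta)$ is log canonical at every point of $E$.

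A point $x\in E$ with $x\neq Q_{0}$ maps isomorphically onto a smooth point $\bar x\in\bar E$ of $\bar S$, and $(\tilde S,\Delta)$ is log canonical at $x$ iff $(\bar S,\bar D)$ is log canonical at $\bar x$. If it is not, Lemma~\ref{lemma:dP2-du-Val} puts $\bar x$ on the ramification quartic, and the argument in the proof of Lemma~\ref{lemma:dP2-double-points} together with Lemma~\ref{lemma:convexity} produces $T\in|{-}K_{\bar S}|$ singular at $\bar x$ with $\mathrm{mult}_{\bar x}(T)=2$ — a cuspidal cubic or a tangent pair of $(-1)$-curves — for which $\mathrm{Supp}(\bar D)\supseteq\mathrm{Supp}(T)$; indeed, were some component $T_{1}$ of $T$ not in $\mathrm{Supp}(\bar D)$, then $T_{1}\cdot\bar D\geqslant\mathrm{mult}_{\bar x}(T_{1})\,\mathrm{mult}_{\bar x}(\bar D)>\mathrm{mult}_{\bar x}(T_{1})$, contradicting $T_{1}\cdot\bar D=T_{1}\cdot(-K_{\bar S})=\mathrm{mult}_{\bar x}(T_{1})$. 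But $\bar E$ is not a component of $T$: it passes through $O$ and satisfies $\bar E\cdot(-K_{\bar S})=1$, whereas a cuspidal anticanonical curve has $(-K_{\bar S})$-degree $2$ and a $(-1)$-curve misses $O$. Hence $\mathrm{Supp}(T)\subseteq\mathrm{Supp}(\sigma_{*}\tilde\Omega)$, and since $\bar E$ is smooth at $\bar x\in\bar E$,
$$2\leqslant\mathrm{mult}_{\bar x}(\bar E\cdot T)\leqslant\bar E\cdot T=\bar E\cdot(-K_{\bar S})=1,$$
a contradiction. So $(\tilde S,\Delta)$ is log canonical at every point of $E$ other than $Q_{0}$.

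The remaining case — log canonicity at $Q_{0}$, where the tangency is concentrated — I expect to be the main obstacle, because $Q_{0}$ maps to the singular point $O$ of $\bar S$ and the degree-two lemmas cannot be invoked there; I would handle it directly on the smooth surface $\tilde S$. Blow up $Q_{0}$, $g\colon\hat S\to\tilde S$, with exceptional curve $G$: the proper transforms $\hat L,\hat C,\hat E$ then meet $G$ in three distinct points and are pairwise disjoint. The decisive estimate, from intersecting $\tilde\Omega$ with the curve $\tilde C\not\subseteq\mathrm{Supp}(\tilde\Omega)$ through $Q_{0}$, is
$$n:=\mathrm{mult}_{Q_{0}}(\tilde\Omega)\leqslant\tilde C\cdot\tilde\Omega=2-a-m,$$
which forces $a+m+n\leqslant 2$; hence in $g^{*}(K_{\tilde S}+\Delta)-K_{\hat S}=a\hat L+\hat\Omega+(m-1)\hat E+(a+m+n-2)G$ the coefficient of $G$ is $\leqslant 0$, and it suffices to prove that the effective boundary $a\hat L+\hat\Omega+(m-1)\hat E$ is log canonical along $G$. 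Using the disjointness of $\hat L,\hat C,\hat E$ and $\mathrm{mult}_{y}(\hat\Omega)\leqslant\hat\Omega\cdot G=n$ for $y\in G$, the multiplicity of this boundary at $\hat L\cap G$, at $\hat E\cap G$, at $\hat C\cap G$, and at a general point of $G$ is at most $a+n=2-m<1$, $(m-1)+n=1-a<1$, $n<1$ and $n<1$ respectively, so Lemma~\ref{lemma:mult} yields log canonicity at each of these points. Therefore $(\tilde S,\Delta)$ is log canonical along $E$, so $(S,D)$ is log canonical at $P$, contradicting the assumption. The delicate parts of this plan are thus the reduction to $\bar S$ and the separate analysis at $O=\sigma(Q_{0})$.
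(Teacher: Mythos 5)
Your proof is correct, and its skeleton largely coincides with the paper's: reduce via Lemma~\ref{lemma:cubic-line} to the case where $L\subseteq\mathrm{Supp}(D)$ and $C\not\subseteq\mathrm{Supp}(D)$, blow up $P$, contract the $(-2)$-curve $\tilde{L}$ to the nodal del Pezzo surface $\bar{S}$ of degree $2$, and invoke Lemma~\ref{lemma:dP2-du-Val}. The genuine divergence is in how the common point $Q_0$ of $\tilde{L}$, $\tilde{C}$ and $E$ is excluded. The paper notes that the (unique) non-log-canonical point $Q\in E$ satisfies $\mathrm{mult}_P(D)+\mathrm{mult}_Q(\tilde{D})>2$ by Remark~\ref{remark:log-pull-back}, while $Q\in\tilde{C}$ would give $\mathrm{mult}_Q(\tilde{D})\leqslant\tilde{C}\cdot\tilde{D}=2-\mathrm{mult}_P(D)$; hence $Q\neq Q_0$, the contraction is an isomorphism near $Q$, the image of $Q$ lies off the ramification quartic, and Lemma~\ref{lemma:dP2-du-Val} finishes in one stroke. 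You instead prove log canonicity at $Q_0$ directly by a second blow-up, using $\mathrm{mult}_{Q_0}(\tilde{\Omega})\leqslant\tilde{C}\cdot\tilde{\Omega}=2-a-m$ to bound the multiplicity of the transformed boundary at every point of the new exceptional curve by $\max\{2-m,\ 1-a\}<1$ --- the same intersection with $\tilde{C}$, repackaged as explicit numerics at the cost of the preliminary estimates $a<1$ and $m<2$; this is sound and arguably more robust. Two minor points. At $x\in E$ with $x\neq Q_0$ your detour through Lemma~\ref{lemma:dP2-double-points} is unnecessary: since $\bar{E}\cap\bar{C}$ is exactly the node $O$, such points automatically map off $R$ and Lemma~\ref{lemma:dP2-du-Val} applies directly (this is the paper's shortcut); your contradiction in that vacuous case is nevertheless valid. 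Also, your justification that $\bar{E}$ is not a component of the tangent divisor $T$ (``a $(-1)$-curve misses $O$'') is loose as stated; the clean reason is that a component $T_2$ tangent to $\bar{E}$ at the smooth point $\bar{x}$ would require $\bar{E}\cdot T_2\geqslant 2$, whereas $\bar{E}\cdot T_2=(-K_{\bar{S}})\cdot\bar{E}-\bar{E}^2=1+\tfrac{1}{2}=\tfrac{3}{2}$.
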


\begin{proof}
Suppose that the log pair $(S,D)$ is not log canonical at the point $P$. Let $L$ and $C$ be the line and the conic, respectively, such that $T_P=L+C$. 
By Lemma~\ref{lemma:cubic-line}, we may assume that the conic $C$ is not contained  but the line $L$  is contained  in the support of $D$.
We write $D=aL+\Omega$, where $a$ is a
positive rational number and $\Omega$ is an effective
$\mathbb{Q}$-divisor whose support contains neither  the
line $L$ nor the conic $C$. 
We have 
$m\leqslant C\cdot D=2$. 

Note that the three curves $\tilde{L}$, $\tilde{C}$ and $E$ meet \red{at one point transversally}.
Since $m\leqslant 2$, we have the unique point $Q$ on $E$ defined in Remark~\ref{remark:log-pull-back}.
The point $Q$  does not belong to $\tilde{C}$, and hence not to $\tilde{L}$ either.
 Indeed, \red{otherwise}
$$
2-m=\tilde{C}\cdot\left(a\tilde{L}+\tilde{\Omega}\right)\geqslant a+\mathrm{mult}_{Q}(\tilde{\Omega})=\mult_Q(\tilde{D}).%
$$
This contradicts (\ref{equation:greater-than-2}).

Let $g\colon\tilde{S}\to\bar{S}$ be the contraction  defined in Remark~\ref{remark:singular-del-Pezzo}.  Note that the point $g(\tilde{L})$
is the ordinary double point of the surface $\bar{S}$. Put
$\bar{\Omega}=g(\tilde{\Omega})$, $\bar{E}=g(E)$, 
$\bar{C}=g(\tilde{C})$ and  $\bar{Q}=g(Q)$. Then
$\pi(\bar{E})=\pi(\bar{C})$ since $\bar{E}+\bar{C}$ is an anticanonical divisor on $\bar{S}$.
The point 
$\pi(\bar{Q})$ lies outside $R$ because the point $Q$ lies outside $\tilde{C}$.
Since the divisor 
$\bar{\Omega}+\big(m-1\big)\bar{E}$ is $\mathbb{Q}$-linearly equivalent to $-K_{\bar{S}}$
by our construction, Lemma~\ref{lemma:dP2-du-Val} shows that 
 the log pair $(\bar{S},
\bar{\Omega}+(m-1)\bar{E})$ is  log canonical at $\bar{Q}$.
However, it is not log
canonical at the point $\bar{Q}$ since $g$ is an isomorphism in
a neighborhood of the point $Q$.   \red{This} is a contradiction.
\end{proof}

\begin{lemma}
\label{lemma:cubic-cusp} Suppose that the tangent hyperplane section $T_P$ is an irreducible cubic curve with a cusp at $P$. 
If  $T_P$ is not contained in the support of $D$, then the log pair $(S,D)$ is log canonical at
 $P$.
\end{lemma}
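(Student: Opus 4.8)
\emph{Proof proposal.} The plan is to argue by contradiction and to reduce the situation to the degree~$2$ case already handled in Lemma~\ref{lemma:dP2}. Assume that $(S,D)$ is not log canonical at $P$; then $\mathrm{mult}_{P}(D)>1$ by Lemma~\ref{lemma:mult}. Since $T_P$ is irreducible, no line of $S$ passes through $P$: such a line would be tangent to $S$ at $P$, hence contained in the tangent plane and so an irreducible component of $T_P$. Consequently the blow up $f\colon\tilde S\to S$ at $P$ produces a \emph{smooth} del Pezzo surface $\tilde S$ of degree $2$. Indeed $(-K_{\tilde S})^2=2$, and for the proper transform $\tilde C$ of any irreducible curve $C\subset S$ one has $-K_{\tilde S}\cdot\tilde C=\deg C-\mathrm{mult}_{P}(C)>0$, since equality would force $C$ to be a line through $P$. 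From $K_{\tilde S}+\tilde D+(\mathrm{mult}_{P}(D)-1)E=f^{*}(K_S+D)$ we see that $\Delta:=\tilde D+(\mathrm{mult}_{P}(D)-1)E$ is an effective anticanonical $\mathbb{Q}$-divisor on $\tilde S$ and that the log pair $(\tilde S,\Delta)$ is not log canonical at some point $Q\in E$; note also that $\mathrm{Supp}(\Delta)=\mathrm{Supp}(\tilde D)\cup E$.

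Now apply Lemma~\ref{lemma:dP2} on $\tilde S$: there is a unique divisor $\bar T\in|-K_{\tilde S}|$ that is not log canonical at $Q$, it is singular at $Q$, every irreducible component of $\bar T$ lies in $\mathrm{Supp}(\Delta)$, and $\bar T$ is either an irreducible rational curve with a cusp at $Q$ or a union of two $-1$-curves meeting tangentially at $Q$. The first alternative is impossible. Since $E$ is not anticanonical on $\tilde S$ (as $E^2=-1\ne 2=(-K_{\tilde S})^2$), an irreducible curve $\Gamma\ne E$ with $\Gamma\sim -K_{\tilde S}=f^{*}(-K_S)-E$ must be the proper transform of a curve $C\in|-K_S|$ with $\mathrm{mult}_{P}(C)=1$; then $C$ is smooth at $P$, so $\Gamma$ is smooth at every point of $E$ and cannot have a cusp at $Q\in E$. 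Hence $\bar T=E_1+E_2$ with $E_1,E_2$ being $-1$-curves meeting tangentially at $Q$; in particular both of them pass through the point $Q\in E$.

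It remains to identify $\bar T$. The proper transform $\tilde T_P=f^{*}(-K_S)-2E$ of the cuspidal cubic $T_P$ is an irreducible $-1$-curve with $\tilde T_P\cdot E=2$, hence tangent to $E$, and it is the only effective divisor in its class. Among the $56$ lines of $\tilde S$, the only ones meeting $E$ are $\tilde T_P$, $E$ itself, and the proper transforms $\widetilde{C'}$ of the $27$ conics of $S$ through $P$; each such $\widetilde{C'}$ satisfies $\widetilde{C'}\cdot E=1$, so it meets $E$ transversally. A direct class computation shows that none of the divisors $\widetilde{C_1'}+\widetilde{C_2'}$, $\widetilde{C'}+\tilde T_P$, $2E_1$ is anticanonical on $\tilde S$ (in each case the multiplicity of $E$ comes out wrong), so one of $E_1,E_2$ must equal $E$; the remaining one then lies in the class $-K_{\tilde S}-E=f^{*}(-K_S)-2E$ and hence equals $\tilde T_P$. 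Thus $\bar T=E+\tilde T_P$. By Lemma~\ref{lemma:dP2} the curve $\tilde T_P$ lies in $\mathrm{Supp}(\Delta)=\mathrm{Supp}(\tilde D)\cup E$, and since $\tilde T_P\ne E$ it is a component of $\tilde D$ with positive coefficient. Pushing forward by $f$, the curve $T_P=f(\tilde T_P)$ is a component of $D$, i.e.\ $T_P\subseteq\mathrm{Supp}(D)$, contradicting the hypothesis. Therefore $(S,D)$ is log canonical at $P$.

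I expect the only delicate point to be the identification $\bar T=E+\tilde T_P$: one must enumerate the $-1$-curves of $\tilde S$ that can pass through a point of $E$ and check which pairs of them span an anticanonical divisor. The reduction to degree $2$ and the exclusion of the irreducible cuspidal alternative for $\bar T$ are routine once one observes that no line of $S$ passes through $P$, so that $\tilde S$ is genuinely smooth.
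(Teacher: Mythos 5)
Your argument is correct, and it follows the paper's overall reduction --- blow up $P$, note that the irreducibility of $T_P$ forces no line of $S$ to pass through $P$ so that $\tilde S$ is a \emph{smooth} del Pezzo surface of degree $2$, and transfer the failure of log canonicity to a point $Q\in E$ --- but it finishes along a different line. The paper invokes only Lemma~\ref{lemma:dP2-du-Val}: the double cover $\pi\colon\tilde S\to\mathbb{P}^2$ forces $\pi(Q)$ to lie on the branch quartic, which pins $Q$ down as the tacnodal point $E\cap\tilde T_P$, and then the hypothesis $T_P\not\subseteq\mathrm{Supp}(D)$ is used to compute
$3-2\,\mathrm{mult}_P(D)=\tilde T_P\cdot\tilde D\geqslant\mathrm{mult}_Q(\tilde D)>2-\mathrm{mult}_P(D)$,
giving $\mathrm{mult}_P(D)<1$ against Lemma~\ref{lemma:mult}. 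You instead apply the full Lemma~\ref{lemma:dP2}, classify the unique non-log-canonical anticanonical divisor at $Q$ as $E+\tilde T_P$, and contradict the hypothesis via the support-containment clause of that lemma. Your identification step is sound; a slightly quicker route than enumerating the $-1$-curves meeting $E$ is to note that $E\cdot(E_1+E_2)=-K_{\tilde S}\cdot E=1$, so if both $E_1$ and $E_2$ pass through $Q\in E$ one of them must equal $E$, and the other then lies in the rigid class $f^{*}(-K_S)-2E$ of $\tilde T_P$. Your finish is in fact exactly how the paper handles the nodal case in Lemma~\ref{lemma:cubic-ODP}, so your proof in effect treats the cuspidal case by the same mechanism, at the cost of using the stronger degree-$2$ statement (Lemma~\ref{lemma:dP2}) where the paper gets by with the weaker Lemma~\ref{lemma:dP2-du-Val} plus one intersection computation.
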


\begin{proof}
Suppose that $(S,D)$ is not log canonical at $P$. 
From the inequality 
$$
3=T_P\cdot
D\geqslant m\cdot\mathrm{mult}_{P}(T_P)=2m,
$$
we obtain  $m\leqslant \frac{3}{2}$.
Then, we have the unique point $Q$ on $E$ defined in Remark~\ref{remark:log-pull-back}.

The surface $\tilde{S}$ is a smooth del Pezzo surface of degree $2$. The linear system
$|-K_{\tilde{S}}|$ induces a double
cover $\pi\colon \tilde{S}\to \mathbb{P}^2$ ramified along a
smooth quartic curve $R\subset\mathbb{P}^2$.  Then the integral  divisor $E+\tilde{T}_P$ is linearly equivalent to $-K_{\tilde{S}}$, and hence 
$\pi(E)=\pi(\tilde{T}_P)$ is a line in $\mathbb{P}^2$.  
Moreover,
$\tilde{T}_{P}$ tangentially meet $E$   at  a single point.
Thus the point $\pi(Q)$ lies on $R$ if and only if
the point $Q$ is the intersection point of $E$ and $\tilde{T}_P$.

Applying Lemma~\ref{lemma:dP2-du-Val} to the
log pair $(\tilde{S},
\tilde{D}+\big(m-1\big)E)$, we see that the point 
$\pi(Q)$ belongs to $R$ because the log pair $(\tilde{S},
\tilde{D}+(m-1)E)$ is not log canonical at the point $Q$
and the divisor
$\tilde{D}+(m-1)E$ is $\mathbb{Q}$-linearly  equivalent to $-K_{\tilde{S}}$. The point $Q$ therefore lies on the curve $\tilde{T}_P$.
 Then from (\ref{equation:greater-than-2}) we obtain 
$$
3-2m=\tilde{T}_{P}\cdot\tilde{D}\geqslant\mathrm{mult}_{Q}(\tilde{D})>2-m.
$$
This contradicts Lemma~\ref{lemma:mult}.
\end{proof}

For the remaining three cases, we show that the hypothesis of Theorem~\ref{theorem:technical} is never fulfilled, so that Theorem~\ref{theorem:technical} is true.
\begin{lemma}
\label{lemma:cubic-ODP} If the tangent hyperplane section $T_P$ is an irreducible cubic curve with a node at~$P$, then  the log pair $(S,D)$ is log canonical at
$P$.
\end{lemma}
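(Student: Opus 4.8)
The plan is to prove that under the stated hypothesis the pair $(S,D)$ is automatically log canonical at $P$, so that there is nothing to check. The first thing I would record is the geometry around $P$. Since $T_P$ is \emph{irreducible}, the point $P$ lies on no line of $S$, because any line $\ell\ni P$ is contained in the tangent hyperplane and hence is a component of $T_P$. Therefore the blow-up $f\colon\tilde{S}\to S$ at $P$ is a \emph{smooth} del Pezzo surface of degree $2$, and $|-K_{\tilde{S}}|$ induces a double cover $\pi\colon\tilde{S}\to\mathbb{P}^2$ ramified along a smooth quartic curve $R$, just as in Lemma~\ref{lemma:cubic-cusp}. Let $\tilde{T}_P$ be the proper transform of $T_P$. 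Since $T_P$ is a nodal rational cubic with node at $P$, the curve $\tilde{T}_P$ is a smooth rational $-1$-curve with $\tilde{T}_P\cdot E=2$, and $\tilde{T}_P+E\in|-K_{\tilde{S}}|$; hence $\pi$ maps each of $E$ and $\tilde{T}_P$ isomorphically onto one and the same line $L\subset\mathbb{P}^2$, and $\pi^{*}L=E+\tilde{T}_P$. Consequently, for a point $Q\in E$ one has $\pi(Q)\in R$ if and only if the two points of the fibre $\pi^{-1}(\pi(Q))$ coincide, i.e. if and only if $Q\in E\cap\tilde{T}_P$; in particular $\pi(Q)\in R$ forces $Q\in\tilde{T}_P$.

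Next, suppose for contradiction that $(S,D)$ is not log canonical at $P$. If $D=T_P$, then $(S,D)$ is log canonical at $P$ because $P$ is an ordinary double point of $T_P$, a contradiction. If instead $T_P\subseteq\mathrm{Supp}(D)$ and $D\neq T_P$, I would apply Lemma~\ref{lemma:convexity} with $T=T_P$; this is legitimate since $T_P$ is a component of $D$ and $(S,T_P)$ is log canonical at $P$, and it lets me replace $D$ by an effective anticanonical $\mathbb{Q}$-divisor, still not log canonical at $P$, whose support no longer contains $T_P$. Thus I may assume $T_P\not\subseteq\mathrm{Supp}(D)$. Then $3=T_P\cdot D\geqslant\mathrm{mult}_P(T_P)\,\mathrm{mult}_P(D)=2\,\mathrm{mult}_P(D)$, while $\mathrm{mult}_P(D)>1$ by Lemma~\ref{lemma:mult}. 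Writing $m=\mathrm{mult}_P(D)$, we get $1<m\leqslant\tfrac{3}{2}$. Since $m\leqslant 2$, Remark~\ref{remark:log-pull-back} gives a unique point $Q\in E$ at which $(\tilde{S},\tilde{D}+(m-1)E)$ is not log canonical, together with the inequality $m+\mathrm{mult}_Q(\tilde{D})>2$.

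Finally, $\tilde{D}+(m-1)E$ is an effective anticanonical $\mathbb{Q}$-divisor on the degree $2$ del Pezzo surface $\tilde{S}$ that fails to be log canonical at $Q$, so Lemma~\ref{lemma:dP2-du-Val} forces $\pi(Q)\in R$, whence $Q\in\tilde{T}_P$ by the first paragraph. Since $T_P\not\subseteq\mathrm{Supp}(D)$, the curve $\tilde{T}_P$ is not a component of $\tilde{D}$, and it is smooth at $Q$, so (using $\tilde{T}_P=f^{*}T_P-2E$ and $\tilde{D}=f^{*}D-mE$)
\[
3-2m=\tilde{T}_P\cdot\tilde{D}\geqslant\mathrm{mult}_Q(\tilde{D})>2-m,
\]
which yields $m<1$, contradicting $m>1$. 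The step I expect to be the main obstacle is the first paragraph: confirming that $\tilde{S}$ is a genuinely smooth del Pezzo surface of degree $2$ and pinning down how the double cover $\pi$ interacts with $E$ and $\tilde{T}_P$ precisely enough to conclude $\pi(Q)\in R\Rightarrow Q\in\tilde{T}_P$. This mirrors the cuspidal case handled in Lemma~\ref{lemma:cubic-cusp}, the only difference being that $\tilde{T}_P$ now meets $E$ transversally at two points rather than tangentially at one.
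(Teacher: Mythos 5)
Your proof is correct, but it takes a different route from the paper's. You reduce via Lemma~\ref{lemma:convexity} to the case $T_P\not\subseteq\mathrm{Supp}(D)$, get $\mathrm{mult}_P(D)\leqslant\tfrac{3}{2}$, and then transplant the argument of Lemma~\ref{lemma:cubic-cusp}: the double cover $\pi\colon\tilde S\to\mathbb{P}^2$ together with Lemma~\ref{lemma:dP2-du-Val} forces the non-log-canonical point $Q$ to lie on $\tilde T_P$, and the intersection computation $3-2m=\tilde T_P\cdot\tilde D\geqslant\mathrm{mult}_Q(\tilde D)>2-m$ gives $m<1$, a contradiction; all the supporting geometry you record ($P$ on no line, $\tilde S$ a smooth del Pezzo of degree $2$, $E+\tilde T_P\in|-K_{\tilde S}|$ mapping to a bitangent line, and $\pi(Q)\in R\Leftrightarrow Q\in E\cap\tilde T_P$) checks out. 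The paper instead invokes the full classification statement of Lemma~\ref{lemma:dP2} for the anticanonical pair $\tilde D+(\mathrm{mult}_P(D)-1)E$ on $\tilde S$: there would have to be an anticanonical divisor $H$ on $\tilde S$ with a tacnode or a cusp at $Q$, and both shapes are excluded directly --- a tacnodal $H$ must contain $E$ (since $E\cdot H=1<2$ otherwise) and would push down to a cuspidal anticanonical divisor at $P$ distinct from $T_P$, contradicting the uniqueness of $T_P$, while a cuspidal $H$ would be irreducible yet forced to contain $E$ for the same intersection reason. The paper's version is shorter and needs neither the convexity reduction nor any restriction on $\mathrm{Supp}(D)$; yours is more self-contained at the final step, relying only on Lemma~\ref{lemma:dP2-du-Val} and explicit intersection numbers rather than on the classification of the singular member of $|-K_{\tilde S}|$.
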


\begin{proof}
Suppose that $(S,D)$ is not log canonical at $P$. 
The surface $\tilde{S}$ is a smooth del Pezzo surface of degree two.
Since $\tilde{D}+(m-1)E\qsim -K_{\tilde{S}}$ and the log pair $(\tilde{S},
\tilde{D}+(m-1)E)$  is not log canonical at some point $Q$ on $E$,
it follows from Lemma~\ref{lemma:dP2} that there
must be an anticanonical divisor $H$ on the surface $\tilde{S}$ that
has either a tacnode or a cusp at the point~$Q$.

If the divisor $H$ has a tacnode at  $Q$, then it
consists of the exceptional divisor $E$ and another $(-1)$-curve $L$
meeting $E$ tangentially at $Q$. Then the divisor $f(H)$ is an
effective anticanonical divisor on $S$ such that it has a cusp at
$P$  and it is distinct from the divisor $T_P$. This is
impossible.

If the divisor $H$ has a cusp at the point $Q$, then it must be
irreducible. However, it is impossible  since $H$ is singular at
$Q$ and $E\cdot H=1$.
\end{proof}

\begin{lemma}
\label{lemma:cubic-triangle} Suppose that the tangent hyperplane section $T_{P}$ consists of
three lines  one of which does not pass through the point~$P$. Then the log pair $(S,D)$ is log canonical at~$P$.
\end{lemma}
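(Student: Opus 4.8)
The plan is to suppose that $(S,D)$ is not log canonical at $P$ and derive a contradiction, following the pattern of Lemma~\ref{lemma:cubic-tacnodal}. Here $T_P=L_1+L_2+L_3$ with $P=L_1\cap L_2$ and $P\notin L_3$, so $T_P$ has an ordinary node at $P$ and $(S,T_P)$ is log canonical at $P$. First I would normalise $D$: by Lemma~\ref{lemma:cubic-line} the support of $D$ contains both lines $L_1,L_2$, and if it also contains $L_3$ then $\mathrm{Supp}(D)$ contains every component of $T_P$, so Lemma~\ref{lemma:convexity} applied to $D$ and $T=T_P$ replaces $D$ by an effective anticanonical $\mathbb{Q}$-divisor that is still not log canonical at $P$ but whose support omits a component of $T_P$ --- necessarily $L_3$, since omitting $L_1$ or $L_2$ would contradict Lemma~\ref{lemma:cubic-line}. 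Thus we may assume $D=a_1L_1+a_2L_2+\Omega$ with $a_1,a_2>0$ and $\Omega\ge 0$ whose support contains none of $L_1,L_2,L_3$. Intersecting with $L_3$ gives $a_1+a_2\le L_3\cdot D=1$; intersecting with $L_1$ and with $L_2$ gives $L_i\cdot\Omega=1+a_i-a_j$; and these force $\mathrm{mult}_P(\Omega)\le 1$, hence $1<\mathrm{mult}_P(D)=a_1+a_2+\mathrm{mult}_P(\Omega)\le 2$, the lower bound coming from Lemma~\ref{lemma:mult}.

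Next I would blow up $P$, $f\colon\tilde S\to S$ with exceptional curve $E$. Since $P$ lies on exactly the two lines $L_1,L_2$, the surface $\tilde S$ is a weak del Pezzo surface of degree $2$ whose only $(-2)$-curves are the proper transforms $\tilde L_1,\tilde L_2$; these are disjoint and each meets $E$ transversally at one point. Using $\mathrm{mult}_P(D)\le 2$ and Remark~\ref{remark:log-pull-back}, the effective anticanonical $\mathbb{Q}$-divisor $\tilde D+(\mathrm{mult}_P(D)-1)E$ on $\tilde S$ is not log canonical at a single point $Q\in E$. Contracting $\tilde L_1$ and $\tilde L_2$ gives $g\colon\tilde S\to\bar S$, where $\bar S$ is a del Pezzo surface of degree $2$ with exactly two ordinary double points $p_1=g(\tilde L_1)$ and $p_2=g(\tilde L_2)$ --- precisely the surfaces studied in Section~\ref{sec:dP2}. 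Since $\tilde L_i$ is a $(-2)$-curve we have $(\tilde D+(\mathrm{mult}_P(D)-1)E)\cdot\tilde L_i=-K_{\tilde S}\cdot\tilde L_i=0$, so $\tilde D+(\mathrm{mult}_P(D)-1)E=g^*\bar B$ for the effective anticanonical $\mathbb{Q}$-divisor $\bar B:=g_*(\tilde D+(\mathrm{mult}_P(D)-1)E)$; hence $(\bar S,\bar B)$ is not log canonical at $\bar Q:=g(Q)$.

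Finally I would split according to the position of $Q$. If $Q\notin\tilde L_1\cup\tilde L_2$, then $\bar Q$ is a smooth point of $\bar S$ lying on the curve $\bar E:=g(E)$. The curve $\bar E$ passes through both $p_1$ and $p_2$ (as $g$ contracts the curves $\tilde L_1,\tilde L_2$ that $E$ meets), so under the anticanonical double cover $\pi\colon\bar S\to\mathbb{P}^2$ its image is the line joining the two nodes of the branch quartic $R$; that line meets $R$ only at those two nodes, which are the images of $p_1$ and $p_2$, so $\pi(\bar Q)$ lies off $R$. But then Lemma~\ref{lemma:dP2-du-Val} forces $(\bar S,\bar B)$ to be log canonical at $\bar Q$ --- a contradiction. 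This leaves the case $Q\in\tilde L_1\cup\tilde L_2$, say $Q=E\cap\tilde L_1$, where $\bar Q=p_1$ is an ordinary double point of $\bar S$, outside the scope of Section~\ref{sec:dP2}; I expect this to be the main obstacle. I would handle it by a direct local analysis on $\tilde S$: near $Q$ the pair reads $a_1\tilde L_1+(\mathrm{mult}_P(D)-1)E+\tilde{\Omega}$ with two normally crossing curves of coefficients $a_1<1$ and $\mathrm{mult}_P(D)-1\le 1$, and applying Adjunction (Lemma~\ref{lemma:adjunction}) along $\tilde L_1$ and along $E$ at the point $Q$, together with the numerical relations established above (which give for instance $\tilde D\cdot\tilde L_1=1-\mathrm{mult}_P(D)$ and force $a_1>\tfrac12$), and, if needed, one further blow-up at $Q$ to control the coefficient of the new exceptional curve, should yield the desired contradiction. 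This local study near the two $(-2)$-curves is carried out in Section~\ref{sec:three-lines}.
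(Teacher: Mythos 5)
Your setup is sound and, up to the harmless extra normalisation $L_3\not\subset\mathrm{Supp}(D)$, it reproduces what the paper proves in Lemmas~\ref{lemma:three-lines-mult-D-is-at-most-2} and~\ref{lemma:three-lines-Q-is-in-L}: the bound $\mathrm{mult}_P(D)\leqslant 2$, the reduction to a single bad point $Q\in E$, and the exclusion of $Q\notin\tilde{L}_1\cup\tilde{L}_2$ via the two-nodal del Pezzo surface of degree $2$ and Lemma~\ref{lemma:dP2-du-Val}. But everything up to that point is the routine part. The case $Q=E\cap\tilde{L}_1$ is the entire content of the lemma, and there your argument is only a declaration of intent ("should yield the desired contradiction"). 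The inequalities you actually name do not close up. Writing $m_0=\mathrm{mult}_P(\Omega)$, adjunction along $E$ at $Q$ gives $a_1+m_0>1$ and adjunction along $\tilde{L}_1$ gives $2a_1>1$; together with $a_1+a_2\leqslant 1$ and $m_0\leqslant 1-|a_1-a_2|$ these are satisfied by, say, $a_1=7/10$, $a_2=3/10$, $m_0=3/5$. One further blow-up at $Q$ does not finish it either: the new not-log-canonical point can again sit at the intersection of the new exceptional curve with the proper transform of $L_1$, where the same two adjunctions only sharpen the constraint to $a_1>2/3$ --- still satisfied by the numbers above --- and the configuration repeats one level deeper. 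So the local analysis, as described, does not visibly terminate; to make it terminate one would need a packaged infinite-descent inequality of the type of \cite[Theorem~1.28]{ChK10}, which you neither invoke nor verify.

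Your closing claim that "this local study near the two $(-2)$-curves is carried out in Section~\ref{sec:three-lines}" is also not accurate: the paper's Section~\ref{sec:three-lines} does something genuinely global. It uses the Geiser involution $\tau_P$ induced by projection from $P$ to transport the pair to a new effective anticanonical $\mathbb{Q}$-divisor $D_1=a_0L+b_0M+(a_0+b_0+m_0-1)N+\tau_P(\Omega_0)$ that fails to be log canonical at a \emph{different} vertex of the triangle $L\cup M\cup N$, and the key inequality $a_0+b_0+m_0>c_0+1$ (Lemma~\ref{lemma:three-lines-main-inequality}) shows that the degree of the residual part $\Omega$ strictly drops under this operation. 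Iterating produces an infinite strictly decreasing sequence in the discrete set $\{\sum e_in_i : n_i\in\mathbb{N}\}$, which is the contradiction. That descent --- not a local adjunction computation at $E\cap\tilde{L}_1$ --- is the missing idea, and without it (or a correct substitute) your proof of the hardest case is incomplete.
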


\begin{proof}
The proof of this lemma is  the central and the most beautiful
part of the proof of Theorem~\ref{theorem:technical}. \red{Since it is a bit lengthy, it will be presented in a separate section.
See Section~\ref{sec:three-lines}.}
\end{proof}

\begin{lemma}
\label{lemma:cubic-nodal} Suppose that the tangent hyperplane section $T_P$ consists of a line
and a conic intersecting transversally. Then the log pair 
$(S,D)$ is log canonical at the point $P$.
\end{lemma}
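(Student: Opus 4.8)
The plan is to follow the template of Lemma~\ref{lemma:cubic-tacnodal}, passing to the del Pezzo surface of degree $2$ obtained from $S$ after blowing up $P$ and contracting the line component of $T_P$. Write $T_P=L+C$, where $L$ is the line and $C$ the conic, meeting transversally at $P$ and at one further point $P'$, so that $P$ is an ordinary double point of $T_P$ and $(S,T_P)$ is log canonical at $P$. Suppose, for contradiction, that $(S,D)$ is not log canonical at $P$. Since any line of $S$ through $P$ lies in the tangent plane and is therefore a component of $T_P$, the line $L$ is the only one through $P$, and Lemma~\ref{lemma:cubic-line} forces $L\subset\mathrm{Supp}(D)$. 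If in addition $C\subset\mathrm{Supp}(D)$, then $\mathrm{Supp}(T_P)\subset\mathrm{Supp}(D)$, and Lemma~\ref{lemma:convexity} produces an effective anticanonical $\mathbb{Q}$-divisor that is still not log canonical at $P$ but omits one component of $T_P$; by Lemma~\ref{lemma:cubic-line} that component cannot be $L$, so after replacing $D$ I may assume throughout that $L\subset\mathrm{Supp}(D)$ while $C\not\subset\mathrm{Supp}(D)$.

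Next I would write $D=nL+\Omega$ with $\Omega$ effective and containing neither $L$ nor $C$. From $C\cdot D=2$ and the transversality of $C$ and $L$ at $P$ I obtain $n\leqslant 1$ and $\mathrm{mult}_P(D)\leqslant 2$. Blowing up $P$ by $f$ makes $\tilde{L}$ a $(-2)$-curve and $\tilde{C}$ a $(-1)$-curve, and the feature distinguishing this case from the tacnodal one is that $\tilde{L}$ and $\tilde{C}$ now meet $E$ at \emph{two distinct} points $a=\tilde{L}\cap E$ and $b=\tilde{C}\cap E$. By Remark~\ref{remark:log-pull-back} the pair $(\tilde{S},n\tilde{L}+\tilde{\Omega}+(\mathrm{mult}_P(D)-1)E)$ fails to be log canonical at a single point $Q\in E$, with $\mathrm{mult}_P(D)+\mathrm{mult}_Q(\tilde{D})>2$. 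Computing $\tilde{C}\cdot\tilde{D}=2-\mathrm{mult}_P(D)$ and using that $\tilde{C}$ is smooth at $Q$ and is not a component of $\tilde{D}$ rules out $Q=b$ exactly as in Lemma~\ref{lemma:cubic-tacnodal}.

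I would then contract the $(-2)$-curve $\tilde{L}$ by $g\colon\tilde{S}\to\bar{S}$, obtaining a del Pezzo surface of degree $2$ with a single ordinary double point $g(\tilde{L})$ and its double cover $\pi\colon\bar{S}\to\mathbb{P}^2$ branched over a nodal quartic $R$. The push-forward $\bar{\Omega}+(\mathrm{mult}_P(D)-1)\bar{E}$ is an effective anticanonical $\mathbb{Q}$-divisor on $\bar{S}$. When $Q\neq a$ the morphism $g$ is an isomorphism near $Q$, so this pair is not log canonical at $\bar{Q}=g(Q)\in\bar{E}\setminus\bar{C}$; since $\pi(\bar{E})=\pi(\bar{C})$ and $\bar{Q}\notin\bar{C}$, the point $\pi(\bar{Q})$ lies off $R$, and Lemma~\ref{lemma:dP2-du-Val} yields log canonicity there, a contradiction.

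The hard part is the remaining possibility $Q=a$, i.e. the forbidden infinitely near point lying in the tangent direction of the line $L$; then $g$ contracts $\tilde{L}$ through $Q$, so $\bar{Q}=g(Q)$ is the ordinary double point and Lemma~\ref{lemma:dP2-du-Val} no longer applies. My plan to exclude it is to work on $\tilde{S}$ along $\tilde{L}$: since $g$ is crepant, the coefficient of $\tilde{L}$ in $g^{*}(\bar{\Omega}+(\mathrm{mult}_P(D)-1)\bar{E})$ equals $\tfrac{1}{2}\,\tilde{L}\cdot(\tilde{\Omega}+(\mathrm{mult}_P(D)-1)E)=n\leqslant 1$, so log canonicity at the double point is equivalent to log canonicity of $(\tilde{S},\tilde{D}+(\mathrm{mult}_P(D)-1)E)$ along $\tilde{L}$. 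Adjunction (Lemma~\ref{lemma:adjunction}) along $\tilde{L}$ at $a$, together with $\tilde{L}\cdot\tilde{\Omega}=1+2n-\mathrm{mult}_P(D)$, forces $n>\tfrac12$, which with $\mathrm{mult}_P(D)+n\leqslant 2$ (from $\tilde{C}\cdot\tilde{\Omega}\geqslant 0$) gives $\mathrm{mult}_P(D)<\tfrac32$; I would then blow up $a$ once more, check that the new exceptional coefficient $\mathrm{mult}_P(D)+\mathrm{mult}_a(\tilde{D})-2$ stays below $1$ using $\mathrm{mult}_a(\tilde{\Omega})\leqslant E\cdot\tilde{\Omega}=\mathrm{mult}_P(D)-n$, and run a final multiplicity count to reach a contradiction. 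As an alternative route I would keep in reserve the conic pencil $|-K_S-L|$, in which $L$ is a bisection and $C$ is the fibre through $P$, to bound $\tilde{\Omega}$ in the direction of $L$ directly. This $Q=a$ analysis is the only genuinely new ingredient beyond the tacnodal case, and it is where I expect the main difficulty to lie.
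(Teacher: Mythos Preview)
Your reduction to the situation $L\subset\mathrm{Supp}(D)$, $C\not\subset\mathrm{Supp}(D)$ and your treatment of the points $Q\ne a$ are fine and match the paper (the paper uses Lemma~\ref{lemma:dP2-double-points} rather than a direct intersection bound to exclude $Q=b$, but your computation $\tilde{C}\cdot\tilde{D}=2-\mathrm{mult}_P(D)$ works just as well). The substantive divergence is in the case $Q=a=\tilde{L}\cap E$, and here your argument is genuinely incomplete.

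After your second blow up you correctly obtain the bound on the new exceptional coefficient, and one further adjunction does exclude the possibilities $Q_2=\tilde{E}\cap E_2$ and $Q_2\in E_2\setminus(\tilde{E}\cup\tilde{\tilde{L}})$. But the case $Q_2=\tilde{\tilde{L}}\cap E_2$ survives: adjunction along $\tilde{\tilde{L}}$ only yields $n>\tfrac{2}{3}$, not a contradiction. The bad point can keep sliding along the successive proper transforms of $L$, and your ``final multiplicity count'' would in effect have to reproduce the descent machinery behind Lemma~\ref{lemma:cubic-triangle} (or, as the paper remarks, the content of \cite[Theorem~1.28]{ChK10}). Neither your blow-up sketch nor the conic pencil $|-K_S-L|$ you mention gives this without substantial further work.

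The paper's device for $Q\in\tilde{L}$ is quite different and avoids this open-ended computation entirely: choose any other line $M$ on $S$ meeting $L$ (then $M\cdot C=0$, so $P\notin M$), and contract the $(-1)$-curve $\tilde{M}\subset\tilde{S}$ to obtain a smooth cubic surface $\check{S}$. The images $\check{L}+\check{C}+\check{E}$ form a \emph{triangle} of coplanar lines on $\check{S}$, with $\check{P}=h(Q)$ the vertex $\check{L}\cap\check{E}$, and $\check{D}\sim_{\mathbb{Q}}-K_{\check{S}}$. Lemma~\ref{lemma:cubic-triangle} then gives the contradiction in one stroke. So the paper absorbs all the difficulty of the $Q\in\tilde{L}$ case into the already-established triangle lemma; your proposal attempts to redo that work locally, and as written does not finish it.
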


\begin{proof}
We write $T_P=L+C$, where $L$ is a line and $C$ is an irreducible
conic that intersect $L$ transversally at  $P$.  Suppose that $(S,D)$ is not log canonical at  $P$.

By Lemmas~\ref{lemma:convexity} and \ref{lemma:cubic-line}, we may assume that the conic $C$ is not contained  but the line $L$  is contained  in the support of $D$.
We write $D=aL+\Omega$, where $a$ is a
positive rational number and $\Omega$ is an effective
$\mathbb{Q}$-divisor whose support contains neither  the
line $L$ nor the conic $C$. 

We have the unique point $Q$ on $E$ defined in Remark~\ref{remark:log-pull-back}  since $m\leqslant D\cdot C=2$.


Suppose that the point $Q$ does not belong to the $(-2)$-curve $\tilde{L}$.
Let $g\colon\tilde{S}\to\bar{S}$ be the contraction  defined in Remark~\ref{remark:singular-del-Pezzo}. Then $\bar{S}$ is a del Pezzo surface of degree $2$ with only one 
ordinary double point at the point $g(\tilde{L})$. Put
$\bar{\Omega}=g(\tilde{\Omega})$, $\bar{E}=g(E)$, 
$\bar{C}=g(\tilde{C})$ and  $\bar{Q}=g(Q)$. Then
$\pi(\bar{E})=\pi(\bar{C})$ since $\bar{E}+\bar{C}$ is an anticanonical divisor on $\bar{S}$.
The point 
$\pi(\bar{Q})$ lies on $R$ if and only if the point $Q$ lies on $\tilde{C}$.
The log pair $(\bar{S},
\bar{\Omega}+(m-1)\bar{E})$ is  not log canonical at $\bar{Q}$
since $g$ is an isomorphism in
a neighborhood of the point $Q$. 
Since the divisor 
$\bar{\Omega}+\big(m-1\big)\bar{E}$ is $\mathbb{Q}$-linearly equivalent to $-K_{\bar{S}}$
by our construction, Lemma~\ref{lemma:dP2-du-Val} shows that the point $Q$ belongs to $\tilde{C}$.

Note that $\bar{C}+\bar{E}$ is
the unique curve in $|-K_{\bar{S}}|$ that is singular at 
$\bar{Q}$. But the log pair $(\bar{S},\bar{C}+\bar{E})$ is log
canonical at  $\bar{Q}$. Hence, it follows from
Lemma~\ref{lemma:dP2-double-points} that  $(\bar{S},
\bar{\Omega}+(m-1)\bar{E})$ is log canonical at
$\bar{Q}$. This is a contradiction. Therefore, the point $Q$ must belong to the $(-2)$-curve $\tilde{L}$.

Now we can apply  \cite[Theorem~1.28]{ChK10} to the log pair
$(\tilde{S}, a\tilde{L}+(m-1)E+\tilde{\Omega})$
at the point $Q$ to obtain a contradiction  immediately. Indeed, it
is enough to put $M=1$, $A=1$, $N=0$, $B=2$, and $\alpha=\beta=1$
in \cite[Theorem~1.28]{ChK10} and check that all the conditions of
\cite[Theorem~1.28]{ChK10} are satisfied. However, there is a much
simpler way to obtain a contradiction. Let us take this simpler way.

There exists another line $M$ on the surface $S$ that intersects $L$ at a point. The line $M$ does not intersect the conic $C$ since  $1=T_P\cdot M=(L+C)\cdot
M=L\cdot M$. In particular, the point $P$ does not lie on the line $M$.
   Let
$h\colon\tilde{S}\to\check{S}$ be the contraction of the proper transform of the line $M$ on the surface $\tilde{S}$.
Since $M$ is a $(-1)$-curve and the point $P$ does not lie on  $M$, the surface $\check{S}$ is a smooth cubic surface in
$\mathbb{P}^3$.

Put $\check{\Omega}=h(\tilde{\Omega})$, $\check{E}=h(E)$, 
$\check{L}=h(\tilde{L})$, $\check{C}=h(\tilde{C})$, 
$\check{P}=h(Q)$  and $\check{D}=h(\tilde{D})$. Then
$(\check{S},\check{D})$ is not log canonical at the point
$\check{P}$ since $h$ is an isomorphism in a neighborhood of
the point $Q$. On the other hand, the divisor $\check{L}+\check{C}+\check{E}$ is an anticanonical divisor of the surface $\check{S}$.
Since the point $\check{P}$ is the intersection point of $\check{L}$ and $\check{E}$ and the divisor $\check{D}$ is $\mathbb{Q}$-linearly equivalent to $-K_{\check{S}}$, 
Lemma~\ref{lemma:cubic-triangle} implies that 
$(\check{S},\check{D})$ is log canonical at 
$\check{P}$. This is a contradiction.
\end{proof}

As we already mentioned, Theorem~\ref{theorem:technical} follows from
Lemmas~\ref{lemma:cubic-Eckard},~\ref{lemma:cubic-tacnodal}, ~\ref{lemma:cubic-cusp},~\ref{lemma:cubic-ODP}, ~\ref{lemma:cubic-triangle} and
\ref{lemma:cubic-nodal}. Thus
Theorem~\ref{theorem:technical} has been proved  under the assumption that  
Lemma~\ref{lemma:cubic-triangle} is valid. 
The assumption will be confirmed in the following section.

\section{The proof of Lemma~\ref{lemma:cubic-triangle}}
\label{sec:three-lines}

To prove  Lemma~\ref{lemma:cubic-triangle}, we keep the notations used in Section~\ref{sec:cubic-surfaces}.
We write 
$T_P=L+M+N$, where $L$, $M$, and $N$ are three
coplanar lines on $S$. We may assume that  the point $P$ is the intersection point of the two lines $L$ and $M$, whereas  it does not lie on the line
$N$.  We also write $D=a_0L+b_0M+c_0N+\Omega_0$, where $a_0$, $b_0$, and $c_0$ are
non-negative rational numbers and $\Omega_0$ is an effective
$\mathbb{Q}$-divisor on $S$ whose support  contains none of the lines
$L$, $M$ and $N$.  Put $m_0=\mathrm{mult}_P(\Omega_0)$.

Suppose that the log pair $(S,D)$ is not log canonical at  the point $P$. Let us look for a contradiction.

 By Lemma~\ref{lemma:cubic-finitely-many-points},
the log pair $(S,D)$ is log canonical outside  finitely many points. 
In particular, we have $0\leqslant a_0, b_0, c_0\leqslant 1$.
Also, Lemma~\ref{lemma:mult} implies  that $m_0+a_0+b_0>1$ and Lemma~\ref{lemma:cubic-line} implies that $a_0$, $b_0>0$.

\begin{lemma}
\label{lemma:three-lines-main-inequality} The inequality 
$m_0+a_0+b_0>c_0+1$ holds.
\end{lemma}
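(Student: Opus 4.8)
The plan is to avoid analyzing $(S,D)$ at $P$ directly and instead apply Lemma~\ref{lemma:mult} to a modified anticanonical $\mathbb{Q}$-divisor obtained from $D$ by subtracting a multiple of the tangent hyperplane section. First, if $c_0=0$ there is nothing to prove: since $P\notin N$ one has $\mathrm{mult}_P(D)=a_0+b_0+m_0$, and Lemma~\ref{lemma:mult} gives $a_0+b_0+m_0>1=c_0+1$. So I will assume $c_0>0$. Because $(S,D)$ is not log canonical at $P$, Lemma~\ref{lemma:cubic-line} forces every line through $P$ into $\mathrm{Supp}(D)$; hence $a_0>0$ and $b_0>0$, so that $\mathrm{Supp}(D)\supseteq\mathrm{Supp}(T_P)=L\cup M\cup N$. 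Moreover, in the present case $T_{P}=L+M+N$ has an ordinary double point at $P$, so $(S,T_P)$ is log canonical at $P$; in particular $D\ne T_P$.

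Next I would invoke Lemma~\ref{lemma:convexity} with $T=T_P$. It yields a positive rational number $\mu$ for which $D_\mu=(1+\mu)D-\mu T_P$ is an effective anticanonical $\mathbb{Q}$-divisor whose support misses at least one of $L,M,N$, and, since $(S,T_P)$ is log canonical at $P$ while $(S,D)$ is not, the log pair $(S,D_\mu)$ is not log canonical at $P$. If $\mathrm{Supp}(D_\mu)$ missed $L$ or $M$, then Lemma~\ref{lemma:cubic-line} (applied to the relevant line through $P$) would force $(S,D_\mu)$ to be log canonical at $P$, a contradiction. Hence $\mathrm{Supp}(D_\mu)$ misses exactly $N$ and retains $L$ and $M$. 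Reading off the proof of Lemma~\ref{lemma:convexity}, this means that $c_0<a_0$ and $c_0<b_0$ (so $c_0<1$ because $a_0\leqslant 1$), that $\mu=\frac{c_0}{1-c_0}$, and that
$$
D_\mu=\frac{1}{1-c_0}\bigl((a_0-c_0)L+(b_0-c_0)M+\Omega_0\bigr).
$$

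Finally I would compute the multiplicity of $D_\mu$ at $P$. Since $L$ and $M$ pass through $P$ and $\mathrm{mult}_P(\Omega_0)=m_0$,
$$
\mathrm{mult}_P(D_\mu)=\frac{(a_0-c_0)+(b_0-c_0)+m_0}{1-c_0}=\frac{a_0+b_0+m_0-2c_0}{1-c_0}.
$$
As $(S,D_\mu)$ is not log canonical at the smooth point $P$, Lemma~\ref{lemma:mult} gives $\mathrm{mult}_P(D_\mu)>1$, and multiplying through by $1-c_0>0$ this is exactly $m_0+a_0+b_0>c_0+1$, as wanted.

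The argument is short once one has the idea of passing from $D$ to $D_\mu$; the only delicate step is identifying the component dropped by $D_\mu$, i.e.\ ruling out that it is $L$ or $M$. This is precisely where Lemma~\ref{lemma:cubic-line} and the hypothesis special to the present case—that $(S,T_P)$ is log canonical at $P$—are used, and it is what forces $c_0$ to be the strict minimum of the three coefficients and thereby makes the final computation land on the bound $c_0+1$. The degenerate case $c_0=0$ must be handled separately, though it is immediate.
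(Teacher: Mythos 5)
Your proof is correct. It shares with the paper's argument the decisive step --- passing to the effective anticanonical $\mathbb{Q}$-divisor $\frac{1}{1-c_0}\bigl(D-c_0T_P\bigr)$ via Lemma~\ref{lemma:convexity} and then reading off the desired inequality from $\mathrm{mult}_P>1$ --- but you reach the prerequisite inequalities $a_0>c_0$ and $b_0>c_0$ by a different mechanism. The paper gets them first and directly: since $N$ does not pass through $P$, the pair $(S,a_0L+b_0M+\Omega_0)$ is still not log canonical at $P$, and Lemma~\ref{lemma:adjunction} applied along $L$ gives $1+a_0-c_0=L\cdot(b_0M+\Omega_0)>1$, hence $a_0>c_0$ (and symmetrically $b_0>c_0$); it then subtracts $\frac{c_0}{1-c_0}T_P$ with that explicit coefficient, without invoking the maximality clause of Lemma~\ref{lemma:convexity}. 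You instead run Lemma~\ref{lemma:convexity} to its maximal $\mu$ and use Lemma~\ref{lemma:cubic-line} to rule out that the dropped component is $L$ or $M$, which forces $c_0$ to be the strict minimum of the three coefficients and identifies $\mu=c_0/(1-c_0)$. Both routes are sound; yours trades the adjunction computation for the line-degree bound $L\cdot D_\mu=1$, at the small cost of having to treat $c_0=0$ separately (where Lemma~\ref{lemma:convexity} is inapplicable because $N\not\subseteq\mathrm{Supp}(D)$) --- a case the paper's normalization handles uniformly, and which is in any event already recorded just before the lemma as $m_0+a_0+b_0>1$.
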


\begin{proof}
Since the log pair $(S, a_0L+b_0M+\Omega_0)$ is not log canonical \red{at the point $P$}, it follows from Lemma~\ref{lemma:adjunction}
that
$$
1+a_0-c_0=L\cdot\left(D-a_0L-c_0N\right)=L\cdot\left(b_0M+\Omega_0\right)>1,
$$
which implies $a_0>c_0$. Similarly, $b_0>c_0$. 

The log pair $(S,L+M+N)$ is log canonical.
Since the log pair $(S, a_0L+b_0M+c_0N+\Omega_0)$ is not log
canonical at $P$, it follows from Lemma~\ref{lemma:convexity}  and its proof that
the log pair
$$
\left(S,\frac{1}{1-c_0}D-\frac{c_0}{1-c_0}T_P\right)$$
is not log canonical at  $P$. 
Then Lemma~\ref{lemma:mult} shows \red{that}
\[\begin{split}
\mult_P\left(\frac{1}{1-c_0}D-\frac{c_0}{1-c_0}T_P\right)&=\mult_P\left(\frac{a_0-c_0}{1-c_0}L+\frac{b_0-c_0}{1-c_0}M+\frac{1}{1-c_0}\Omega_0\right)\\ &=\frac{a_0-c_0}{1-c_0}+\frac{b_0-c_0}{1-c_0}+\frac{m_0}{1-c_0}>1.\\%
\end{split}\]
\red{This} verifies $m_0+a_0+b_0>c_0+1$.
\end{proof}

Since the rational numbers $a_0$, $b_0$, $c_0$ are \red{smaller or equal to} $1$ and the log pair $(S, L+M+N)$ is log canonical, the effective $\mathbb{Q}$-divisor $\Omega_0$ cannot be the zero-divisor.
Let $r$ be the number of the irreducible components of the support of
the $\mathbb{Q}$-divisor $\Omega_0$. Then we write
$$\Omega_0=\sum_{i=1}^{r}e_{i}C_{i0},$$ where $e_{i}$'s are positive rational numbers and $C_{i0}$'s are
irreducible reduced curves of degrees $d_{i0}$ on  $S$.  We then see 
\begin{equation}\label{equation:three-lines-degree-D-is-3}
3=-K_{S}\cdot\left(a_0L+b_0M+c_0N+\sum_{i=1}^{r}e_{i}C_{i0}\right)=a_0+b_0+c_0+\sum_{i=1}^{r}e_{i}d_{i0}.%
\end{equation}

 We have 
 $$
K_{\tilde{S}}+a_0\tilde{L}+b_0\tilde{M}+c_0\tilde{N}+\left(a_0+b_0+m_0-1\right)E+\sum_{i=1}^{r}e_{i}\tilde{C}_{i0}=f^{*}\left(K_{S}+D\right).%
$$
Recall that $a_0+b_0+m_0=m$.

\begin{lemma}
\label{lemma:three-lines-mult-D-is-at-most-2} The inequality 
$m=a_0+b_0+m_0\leqslant 2$ holds.
\end{lemma}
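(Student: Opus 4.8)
The plan is to bound $\mathrm{mult}_{P}(D)=a_0+b_0+m_0$ purely by intersection theory on $S$, using the three lines of the triangle $T_P=L+M+N$ as test curves. Recall the configuration: $L$ and $M$ meet at $P$, and $N$ meets each of $L$ and $M$ at a point other than $P$, so on the cubic surface $L^2=M^2=N^2=-1$, $L\cdot M=M\cdot N=N\cdot L=1$, and $(-K_S)\cdot L=(-K_S)\cdot M=(-K_S)\cdot N=1$. Since $N$ avoids $P$ and $L,M$ are smooth at $P$, we also have $\mathrm{mult}_{P}(D)=a_0+b_0+m_0$, where $m_0=\mathrm{mult}_{P}(\Omega_0)$.

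First I would intersect the tail $\Omega_0$ with the two lines through $P$. As $\Omega_0$ is effective, its support contains neither $L$ nor $M$, and both lines are smooth at $P$, the standard multiplicity estimate gives $\Omega_0\cdot L\geq\mathrm{mult}_{P}(\Omega_0)=m_0$ and $\Omega_0\cdot M\geq m_0$. Using $\Omega_0\sim_{\mathbb{Q}}-K_S-a_0L-b_0M-c_0N$ to evaluate the left-hand sides, these become $1+a_0-b_0-c_0\geq m_0$ and $1-a_0+b_0-c_0\geq m_0$.

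Next I would intersect $\Omega_0$ with the third line $N$. Since $N$ is irreducible and not a component of $\Omega_0$, we simply get $\Omega_0\cdot N\geq 0$, i.e. $1-a_0-b_0+c_0\geq 0$. The point of using $N$ here rather than $L$ or $M$ is that $N$ does not pass through $P$: no multiplicity term intervenes, and the coefficient $c_0$ enters with the opposite sign. Adding this inequality to each of the two previous ones gives $2b_0+m_0\leq 2$ and $2a_0+m_0\leq 2$; summing these two yields $2(a_0+b_0+m_0)\leq 4$, that is, $\mathrm{mult}_{P}(D)=a_0+b_0+m_0\leq 2$.

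I do not expect a serious obstacle: the only points requiring care are the correctness of the listed intersection numbers and of the formula $\mathrm{mult}_{P}(D)=a_0+b_0+m_0$, both of which rely on the configuration hypothesis that $P$ is the vertex $L\cap M$ and lies on neither of the other two sides of the triangle, together with the elementary inequality $\Omega_0\cdot L\geq\mathrm{mult}_{P}(\Omega_0)\,\mathrm{mult}_{P}(L)$ for a curve $L$ smooth at $P$ and not contained in $\mathrm{Supp}(\Omega_0)$. Note that this argument never invokes the failure of log canonicity of $(S,D)$ at $P$; it is a bound valid for every effective anticanonical $\mathbb{Q}$-divisor in this case.
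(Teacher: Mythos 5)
Your proposal is correct and is essentially the paper's own argument: the paper derives the same three inequalities by intersecting $D$ with $L$, $M$ and $N$ (equivalent to your intersections of $\Omega_0$ with the three lines) and combines them in the same way. Nothing to add.
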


\begin{proof}
It immediately follows from the three inequalities
\[\aligned
&1=L\cdot\left(a_0L+b_0M+c_0N+\Omega_0\right)=-a_0+b_0+c_0+L\cdot\Omega_0\geqslant -a_0+b_0+c_0+m_0,\\
&1=M\cdot\left(a_0L+b_0M+c_0N+\Omega_0\right)=a_0-b_0+c_0+M\cdot\Omega_0\geqslant a_0-b_0+c_0+m_0,\\
&1=N\cdot\left(a_0L+b_0M+c_0N+\Omega_0\right)=a_0+b_0-c_0+N\cdot\Omega_0\geqslant a_0+b_0-c_0.\\
\endaligned\]
\end{proof}

The
log pair
\begin{equation}
\label{equation:three-lines-log-pull-back}
\left(\tilde{S}, a_0\tilde{L}+b_0\tilde{M}+c_0\tilde{N}+\left(a_0+b_0+m_0-1\right)E+\sum_{i=1}^{r}e_{i}\tilde{C}_{i0}\right)%
\end{equation}
is not log canonical at some point $Q$ on $E$.
Since
$\mathrm{mult}_{P}(D)=a_0+b_0+m_0\leqslant 2$, it follows from
Remark~\ref{remark:log-pull-back} that   $Q$ is the only point on $E$ where the log pair
$(\ref{equation:three-lines-log-pull-back})$ fails to be log canonical.

Let $g\colon\tilde{S}\to\bar{S}$ be the contraction  defined in Remark~\ref{remark:singular-del-Pezzo}.
Then $\bar{S}$ is a del Pezzo surface of degree $2$ with two ordinary double points at the points $g(\tilde{L})$ and $g(\tilde{M})$.

\begin{lemma}
\label{lemma:three-lines-Q-is-in-L} The point $Q$ on the exceptional curve $E$ belongs to either  $\tilde{L}$ or 
$\tilde{M}$.
\end{lemma}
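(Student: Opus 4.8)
The plan is to argue by contradiction: I would assume $Q\notin\tilde L\cup\tilde M$ and show that this forces the non--log canonical point, when pushed down to $\bar S$, to lie outside the ramification locus of $\pi$, which is impossible by Lemma~\ref{lemma:dP2-du-Val}. The first step is to pin down the anticanonical geometry of $\bar S$. Since $f^{*}T_{P}=\tilde L+\tilde M+\tilde N+2E$ and $K_{\tilde S}=f^{*}K_{S}+E$, we get $-K_{\tilde S}\sim\tilde L+\tilde M+\tilde N+E$; pushing forward by $g$ (which contracts the two $(-2)$-curves $\tilde L$ and $\tilde M$) yields $-K_{\bar S}\sim\bar N+\bar E$, where $\bar N=g(\tilde N)$ and $\bar E=g(E)$. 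Both $\bar N$ and $\bar E$ are $(-1)$-curves, and each of them passes through both ordinary double points $g(\tilde L)$, $g(\tilde M)$ of $\bar S$, since $E$ meets each of $\tilde L,\tilde M$ and $N$ meets each of $L,M$ at a point other than $P$; on the other hand $E\cdot\tilde N=0$ on $\tilde S$, so $\bar N$ and $\bar E$ meet nowhere else. Hence $\pi(\bar N)=\pi(\bar E)=:\ell_{0}$ is a line in $\mathbb P^{2}$, and $\ell_{0}$ passes through the two points $p_{1}=\pi(g(\tilde L))$ and $p_{2}=\pi(g(\tilde M))$, which are exactly the two nodes of the branch quartic $R$. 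Since $R$ has only these two singular points and $\ell_{0}\cdot R=4$, B\'ezout forces $\ell_{0}\cdot R=2p_{1}+2p_{2}$, so $\ell_{0}\cap R=\{p_{1},p_{2}\}$; moreover $\pi^{-1}(p_{i})$ is just the ordinary double point lying over $p_{i}$.

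Now suppose $Q\notin\tilde L\cup\tilde M$. Then $g$ is an isomorphism near $Q$, so $\bar Q:=g(Q)$ is a smooth point of $\bar S$, distinct from $g(\tilde L)$ and $g(\tilde M)$. Set
\[\bar B=g_{*}\Big(a_{0}\tilde L+b_{0}\tilde M+c_{0}\tilde N+\big(\mathrm{mult}_{P}(D)-1\big)E+\sum_{i=1}^{r}e_{i}\tilde C_{i0}\Big)=c_{0}\bar N+\big(\mathrm{mult}_{P}(D)-1\big)\bar E+\sum_{i=1}^{r}e_{i}g(\tilde C_{i0}).\]
Since the log canonical class of the pair $(\ref{equation:three-lines-log-pull-back})$ is $\mathbb Q$-trivial and $g$ contracts only $(-2)$-curves, we get $\bar B\sim_{\mathbb Q}-K_{\bar S}$; and since $g$ is a local isomorphism near $Q$ while $(\ref{equation:three-lines-log-pull-back})$ is not log canonical at $Q$, the log pair $(\bar S,\bar B)$ is not log canonical at $\bar Q$. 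But $Q\in E$ forces $\bar Q\in\bar E$, hence $\pi(\bar Q)\in\ell_{0}$; if in addition $\pi(\bar Q)\in R$, then $\pi(\bar Q)\in\{p_{1},p_{2}\}$, which would make $\bar Q$ one of the ordinary double points of $\bar S$ --- impossible. Therefore $\pi(\bar Q)\notin R$, so $\bar Q$ lies outside the ramification divisor of $\pi$, and Lemma~\ref{lemma:dP2-du-Val} then says that $(\bar S,\bar B)$ is log canonical at $\bar Q$ --- a contradiction. Hence $Q$ lies on $\tilde L$ or on $\tilde M$.

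The step I expect to require the most care is the identification $\pi(\bar E)=\pi(\bar N)=\ell_{0}$ together with the computation $\ell_{0}\cap R=\{p_{1},p_{2}\}$, i.e.\ that $\ell_{0}$ is the line through the two nodes of $R$ and meets $R$ nowhere else; once this is in place, the contradiction with Lemma~\ref{lemma:dP2-du-Val} is immediate. One also has to verify routinely that $\bar B$ is the stated $g$-pushforward and is $\mathbb Q$-linearly equivalent to $-K_{\bar S}$, which follows at once from the fact that $g$ is crepant along the $(-2)$-curves $\tilde L$ and $\tilde M$.
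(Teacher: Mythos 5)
Your proof is correct and follows the paper's argument essentially verbatim: contract $\tilde L$ and $\tilde M$, push the non-log-canonical pair down to the two-nodal degree-two del Pezzo surface $\bar S$, observe that $\pi(\bar Q)$ lies off the branch quartic $R$, and contradict Lemma~\ref{lemma:dP2-du-Val} --- your B\'ezout computation of $\ell_0\cap R$ is just a more explicit version of the paper's one-line observation that $\bar Q$ is a smooth point of the anticanonical divisor $\bar E+\bar N$. The only (immaterial) slip is calling $\bar E$ and $\bar N$ $(-1)$-curves on $\bar S$: as $\mathbb{Q}$-Cartier divisors passing through both nodes they have self-intersection $0$, but this is never used in your argument.
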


\begin{proof}

Suppose that the point $Q$ lies on neither  $\tilde{L}$ nor $\tilde{M}$.  Put $\bar{E}=g(E)$,
$\bar{N}=g(\tilde{N})$ and $\bar{Q}=g(Q)$. In addition, we put $\bar{C}_{i0}=g(\tilde{C}_{i0})$ for each $i$. Then
$\pi(\bar{E})=\pi(\bar{N})$. The point
$\pi(\bar{Q})$ lies outside the quartic curve $R$ since  $\bar{Q}$ is a smooth point of the anticanonical divisor $\bar{E}+\bar{N}$ on $\bar{S}$.

Since $g$ is an isomorphism in a neighborhood of the point $Q$,
the log pair
\begin{equation}
\label{equation:three-lines-log-transform}
\left(\bar{S}, c_0\bar{N}+\left(a_0+b_0+m_0-1\right)\bar{E}+\sum_{i=1}^{r}e_{i}\bar{C}_{i0}\right)%
\end{equation}
is not log canonical at the point $\bar{Q}$. 
The divisor 
$c_0\bar{N}+(a_0+b_0+m_0-1)\bar{E}+\sum_{i=1}^{r}e_{i}\bar{C}_{i0}$ is an effective anticanonical $\mathbb{Q}$-divisor
on the surface $\bar{S}$.
Hence, we are able to apply Lemma~\ref{lemma:dP2-du-Val} to the log pair
$(\ref{equation:three-lines-log-transform})$ to obtain a
contradiction.
\end{proof}

\emph{From now on we may assume that the point $Q$ is the intersection point of  $\tilde{L}$ and  $E$ without loss of generality.}
\medskip

Let $\rho\colon S\dasharrow\mathbb{P}^2$ be the linear projection
from the point $P$. Then $\rho$ is a generically $2$-to-$1$
rational map. Thus the map $\rho$ induces a birational involution
$\tau_P$ of the cubic surface $S$. The involution $\tau_P$
 is  classically known as the Geiser involution associated to the point~$P$
(see \cite{Ma67}).

\begin{remark}
\label{remark:three-lines-Geiser} By construction, the involution
$\tau_P$ is biregular outside  the union $L\cup M\cup N$.
In fact, one can show that $\tau_P$ is biregular outside  the
point $P$ and the line $N$. Moreover, one can show that
$\tau_P(L)=L$ and $\tau_P(M)=M$.
\end{remark}

For each $i$, put $C_{i1}=\tau_P(C_{i0})$ and
denote by $d_{i1}$ the degree of the curve $C_{i1}$. We then employ new effective $\mathbb{Q}$-divisors
\[\aligned
&\Omega_1=\sum_{i=1}^re_iC_{i1};\\ 
&D_1=a_1L+b_1M+c_1N+\Omega_1,\\
\endaligned
\]
where $a_1=a_0$, $b_1=b_0$ and $c_1=a_0+b_0+m_0-1$.
Note that $a_0+b_0+m_0-1>0$ by Lemma~\ref{lemma:mult} (cf.
Lemma~\ref{lemma:three-lines-main-inequality}).

\begin{lemma}
\label{lemma:three-lines-main-lemma} The divisor $D_1$ is an effective anticanonical $\mathbb{Q}$-divisor on the surface $S$. The log pair $(S,D_1)$ is not
log canonical at  the intersection point of $L$ and $N$. \end{lemma}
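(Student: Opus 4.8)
The plan is to produce $D_1$ as the image of the log pull-back of $(S,D)$ under the Geiser involution, and then to chase the non-log-canonical point through this construction. Write $f\colon\tilde S\to S$ for the blow-up at $P$ and $\sigma$ for the lift of $\tau_P$ to $\tilde S$. The first and decisive step is to describe how $\sigma$ acts on $\tilde L,\tilde M,\tilde N$ and $E$. Recall from the discussion before Lemma~\ref{lemma:three-lines-Q-is-in-L} that $\tilde S$ is a weak del Pezzo surface of degree $2$ whose only $(-2)$-curves are $\tilde L$ and $\tilde M$, that contracting them by $g$ yields the del Pezzo surface $\bar S$ of degree $2$ with two ordinary double points, and that the anticanonical morphism of $\tilde S$ is $\pi\circ g$, where $\pi\colon\bar S\to\mathbb P^2$ is the double cover branched along the irreducible $2$-nodal quartic $R$. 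The involution $\tau_P$ lifts to the deck transformation $\sigma$ of $\pi\circ g$, a regular involution of $\tilde S$ fixing $K_{\tilde S}$. I then need two facts: that $\sigma(\tilde L)=\tilde L$ and $\sigma(\tilde M)=\tilde M$, which is Remark~\ref{remark:three-lines-Geiser} (since $\tau_P(L)=L$ and $\tau_P(M)=M$); and that $\sigma$ interchanges $E$ and $\tilde N$. For the latter, observe that $E+\tilde L+\tilde N+\tilde M\qsim -K_{\tilde S}$ is a cycle of four rational curves whose image under $\pi\circ g$ is the line $\ell_0\subset\mathbb P^2$ joining the two nodes of $R$; in this cycle $\tilde L$ and $\tilde M$ are exactly the two curves contracted by $\pi\circ g$ (they go to the nodes), whereas $E$ and $\tilde N$ each map isomorphically onto $\ell_0$ and are therefore the two sheets that $\sigma$ exchanges. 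Hence $\sigma(E)=\tilde N$, $\sigma(\tilde N)=E$, and in particular $\sigma$ sends $Q=\tilde L\cap E$ to the point $\tilde L\cap\tilde N$.

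Granting this, the first assertion is a pushforward computation. Put $\mathcal D:=a_0\tilde L+b_0\tilde M+c_0\tilde N+(a_0+b_0+m_0-1)E+\sum_{i=1}^{r}e_i\tilde C_{i0}$, the boundary divisor of the log pair $(\ref{equation:three-lines-log-pull-back})$; it is effective with $\mathcal D\qsim -K_{\tilde S}$. Since $\sigma$ is an automorphism fixing $K_{\tilde S}$,
\[
\sigma_{*}\mathcal D=a_0\tilde L+b_0\tilde M+c_0E+(a_0+b_0+m_0-1)\tilde N+\sum_{i=1}^{r}e_i\,\sigma_{*}\tilde C_{i0}
\]
is effective with $\sigma_{*}\mathcal D\qsim -K_{\tilde S}$. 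Because the support of $\Omega_0$ contains none of $L,M,N$ and no $C_{i0}$ is the point $P$, none of the curves $\sigma_{*}\tilde C_{i0}$ coincides with $E$, $\tilde L$, $\tilde M$ or $\tilde N$, so $f$ maps each of them birationally onto $C_{i1}=\tau_P(C_{i0})$; applying $f_{*}$ therefore annihilates precisely the $c_0E$-summand and yields
\[
f_{*}\sigma_{*}\mathcal D=a_0L+b_0M+(a_0+b_0+m_0-1)N+\sum_{i=1}^{r}e_iC_{i1}=D_1.
\]
As $f_{*}(-K_{\tilde S})=f_{*}\bigl(f^{*}(-K_S)-E\bigr)=-K_S$, this gives $D_1\qsim -K_S$; all coefficients of $D_1$ are non-negative, the only one needing comment being $c_1=a_0+b_0+m_0-1=\mult_P(D)-1>0$, which holds by Lemma~\ref{lemma:mult}. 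Thus $D_1$ is an effective anticanonical $\mathbb Q$-divisor, and this computation is exactly what forces the choice $c_1=a_0+b_0+m_0-1$.

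For the second assertion, the pair $(\tilde S,\mathcal D)$ is not log canonical at $Q=\tilde L\cap E$ by Remark~\ref{remark:log-pull-back} together with the reduction made just before the present lemma, so $(\tilde S,\sigma_{*}\mathcal D)$ is not log canonical at $\sigma(Q)=\tilde L\cap\tilde N$. Since the three lines are not concurrent and $P=L\cap M$ does not lie on $N$, the point $L\cap N$ is a vertex of the triangle distinct from $P$, so $f$ is an isomorphism over a neighbourhood of $L\cap N$; moreover $E$, the only curve contracted by $f$, does not pass through $\tilde L\cap\tilde N$, so near that point $\sigma_{*}\mathcal D$ agrees with the proper transform of $D_1$. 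Hence $(S,D_1)$ is not log canonical at $L\cap N$.

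The main obstacle is the description of $\sigma$ in the first step — proving that it fixes the two $(-2)$-curves and swaps $E$ with $\tilde N$; once that is in hand, the rest is pushforward bookkeeping that, pleasingly, is exactly what dictates the definition of $c_1$. A secondary point not to be skipped is the verification that $f$ is an isomorphism near $L\cap N$ and carries no exceptional contribution there, which is what lets the failure of log canonicity descend from $\tilde S$ to $S$.
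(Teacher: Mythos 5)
Your proof is correct and follows essentially the same route as the paper: both rest on the lift $\upsilon_P$ of the Geiser involution to $\tilde S$ (the deck transformation of the anticanonical double cover), the key facts $\upsilon_P(\tilde L)=\tilde L$, $\upsilon_P(\tilde M)=\tilde M$, $\upsilon_P(E)=\tilde N$, and the transport of the non-log-canonical point $Q=\tilde L\cap E$ to $\tilde L\cap\tilde N$ followed by descent to $S$. The only cosmetic difference is that the paper packages your pushforward $f_{*}\circ(\upsilon_P)_{*}$ as the contraction $h$ of $\tilde N$ onto an auxiliary cubic surface $S'$ together with an isomorphism $\sigma\colon S\to S'$, whereas you compute it directly.
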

\begin{proof}
Let
$h\colon\tilde{S}\to S'$ be the contraction of the $(-1)$-curve
$\tilde{N}$. Then $S'$ is a smooth cubic surface in
$\mathbb{P}^3$. Put $E'=h(E)$, 
$L'=h(\tilde{L})$, $M'=h(\tilde{M})$, 
$Q'=h(Q)$ and ${C}'_{i0}=h(\tilde{C}_{i0})$ for each $i$.
Then the integral divisor $L'+M'+E'$ is an anticanonical divisor of  $S'$. In particular, the curves $L'$, $M'$ and $E'$ are coplanar lines on $S'$. Moreover, the point $Q'$ is the intersection point of $L'$ and $E'$ by the assumption right after Lemma~\ref{lemma:three-lines-Q-is-in-L}. It does not lie on the line $M'$.

Let $\iota_{P}$ be the biregular involution of the surface
$\bar{S}$  induced by the double cover $\pi$. Then
$\iota_{P}$ induces a biregular involution $\upsilon_{P}$ of the
surface $\tilde{S}$ since the surface $\tilde{S}$ is the minimal resolution
of singularities of the surface $\bar{S}$. Thus, we have a
commutative diagram
$$
\xymatrix{
\tilde{S}\ar@{->}[d]_{f}\ar@{->}[drr]_{g}\ar@{->}[rrrrrr]^{\upsilon_P}&&&&&&\tilde{S}\ar@{->}[d]^{f}\ar@{->}[dll]^{g}\\%
S\ar@/_1pc/@{-->}[drrr]_{\rho}&&\bar{S}\ar@{->}[dr]_{\pi}\ar@{->}[rr]^{\iota_P}&&\bar{S}\ar@{->}[dl]^{\pi}&&S\ar@/^1pc/@{-->}[dlll]^{\rho}.\\
&&&\mathbb{P}^2&&&}
$$ %
This shows $\tau_P=f\circ\upsilon_P\circ f^{-1}$. On
the other hand, we have $\upsilon_P(E)=\tilde{N}$ since
$\pi\circ g(E)=\pi\circ g(\tilde{N})$. This means that there
exists an isomorphism $\sigma\colon S\to S'$ that makes the
diagram
$$
\xymatrix{
\tilde{S}\ar@{->}[d]_{h}\ar@{->}[rrr]^{\upsilon_P}&&&\tilde{S}\ar@{->}[d]^{f}\\%
S'\ar@{<-}[rrr]^{\sigma}&&&S\\}
$$ %
commute. By construction, $\sigma(L)=L'$,
$\sigma(M)=M'$, $\sigma(N)=E'$, and
$\sigma(C_{i1})=C'_{i0}$ for every $i$. Recall that $Q'$ is the intersection point of $L'$ and $E'$.

 Since $h$ is an
isomorphism locally around $Q$, the log pair
$$
\left(S',
a_0L'+b_0M'+\left(a_0+b_0+m_0-1\right)E'+\sum_{i=1}^{r}e_iC'_{i0}\right)
$$
is not log canonical at  $Q'$. Since
$a_0\tilde{L}+b_0\tilde{M}+c_0\tilde{N}+\left(a_0+b_0+m_0-1\right)E+\sum_{i=1}^{r}e_i\tilde{C}_{i0}\sim_{\mathbb{Q}}-K_{\tilde{S}}$,
we have
$a_0L'+b_0M'+\left(a_0+b_0+m_0-1\right)E'+\sum_{i=1}^{r}e_iC'_{i0}\sim_{\mathbb{Q}}-K_{S'}$.
Therefore, it follows
 that
$$
a_0L+b_0M+(a_0+b_0+m_0-1)N+\sum_{i=1}^re_iC_{i1}\sim_{\mathbb{Q}}-K_{S},
$$
and the log pair
$(S,a_0L+b_0M+(a_0+b_0+m_0-1)N+\sum_{i=1}^re_iC_{i1})$ is not
log canonical at the intersection the point of $L$ and $N$. \end{proof}

Now we are able to replace the
original effective $\mathbb{Q}$-divisor $D$ by the new effective
$\mathbb{Q}$-divisor $D_1$. By
Lemma~\ref{lemma:three-lines-main-lemma}, both the
$\mathbb{Q}$-divisors have the same properties that we have been
using so far. However, the new $\mathbb{Q}$-divisor $\Omega_1$ is
\emph{slightly better} than the original one $\Omega_0$ in the sense of the following lemma.

\begin{lemma}
\label{lemma:three-lines-Geiser-lowers-degree} The degree of the $\mathbb{Q}$-divisor $\Omega_1$ is strictly smaller than the degree of  $\Omega_0$, i.e., 
$$\sum_{i=1}^{r}e_id_{i1}<\sum_{i=1}^{r}e_id_{i0}.$$
\end{lemma}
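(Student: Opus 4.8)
First I would compute, in closed form, the degree $d_{i1}$ of each curve $C_{i1}=\tau_P(C_{i0})$ in terms of $d_{i0}$ and $\mathrm{mult}_{P}(C_{i0})$, by following the Geiser involution through the Picard lattice of $\tilde{S}$; then I would sum over $i$ with the weights $e_i$ and quote Lemma~\ref{lemma:three-lines-main-inequality}. Throughout I would use the factorisation $\tau_P=f\circ\upsilon_P\circ f^{-1}$ and the identity $\upsilon_P(E)=\tilde{N}$ established in the proof of Lemma~\ref{lemma:three-lines-main-lemma}.

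Put $H=f^{*}(-K_S)$, so that $-K_{\tilde{S}}=H-E$, equivalently $H=-K_{\tilde{S}}+E$. Since $\upsilon_P$ is a biregular automorphism of $\tilde{S}$ it fixes the canonical class, so $\upsilon_P^{*}(-K_{\tilde{S}})=-K_{\tilde{S}}$; together with $\upsilon_P^{*}[E]=[\tilde{N}]$ this gives
$$
\upsilon_P^{*}(H)=-K_{\tilde{S}}+\upsilon_P^{*}(E)=(H-E)+\tilde{N}.
$$
For each $i$ the proper transform of $C_{i1}$ on $\tilde{S}$ is the irreducible curve $\upsilon_P(\tilde{C}_{i0})$, which is not $E$ because $C_{i0}$ is distinct from $N$; in particular $C_{i1}$ is a genuine curve on $S$ and $[\tilde{C}_{i1}]=\upsilon_P^{*}[\tilde{C}_{i0}]$. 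Since $\upsilon_P^{*}$ preserves the intersection form,
$$
d_{i1}=\tilde{C}_{i1}\cdot H=\upsilon_P^{*}(\tilde{C}_{i0})\cdot H=\tilde{C}_{i0}\cdot\upsilon_P^{*}(H)=\tilde{C}_{i0}\cdot H-\tilde{C}_{i0}\cdot E+\tilde{C}_{i0}\cdot\tilde{N}.
$$
Now $\tilde{C}_{i0}=f^{*}C_{i0}-\mathrm{mult}_{P}(C_{i0})\,E$ and $\tilde{N}=f^{*}N$ (since $P\notin N$), so the three terms equal $d_{i0}$, $\mathrm{mult}_{P}(C_{i0})$ and $C_{i0}\cdot N$ respectively, whence
$$
d_{i1}=d_{i0}-\mathrm{mult}_{P}(C_{i0})+C_{i0}\cdot N .
$$

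Next I would sum this identity against the coefficients $e_i$. Writing $m_0=\mathrm{mult}_{P}(\Omega_0)=\sum_{i=1}^{r}e_i\,\mathrm{mult}_{P}(C_{i0})$, it becomes
$$
\sum_{i=1}^{r}e_i d_{i1}-\sum_{i=1}^{r}e_i d_{i0}=-m_0+\Omega_0\cdot N .
$$
To evaluate $\Omega_0\cdot N$ I would intersect $D=a_0L+b_0M+c_0N+\Omega_0\sim_{\mathbb{Q}}-K_S$ with the line $N$; using $L\cdot N=M\cdot N=1$ and $N^{2}=-1$ this gives $1=a_0+b_0-c_0+\Omega_0\cdot N$, so $\Omega_0\cdot N=1-a_0-b_0+c_0$. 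Substituting,
$$
\sum_{i=1}^{r}e_i d_{i1}-\sum_{i=1}^{r}e_i d_{i0}=(1+c_0)-(a_0+b_0+m_0)<0
$$
by Lemma~\ref{lemma:three-lines-main-inequality}, which is precisely the assertion.

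Most of this is bookkeeping, and I do not expect a serious obstacle. The steps I would treat most carefully are the identity $\upsilon_P^{*}[E]=[\tilde{N}]$ (already available), the verification that $\upsilon_P$ carries each $\tilde{C}_{i0}$ to a curve different from $E$, so that $C_{i1}$ is genuinely a curve and the formula for $d_{i1}$ makes sense, and the elementary computation $\Omega_0\cdot N=1-a_0-b_0+c_0$. The actual content of the lemma is that the strict inequality needed at the end is exactly Lemma~\ref{lemma:three-lines-main-inequality}: no new geometric input is required, and the payoff is that applying the Geiser involution is a strict descent on $\deg\Omega$, which is what makes the argument of Lemma~\ref{lemma:cubic-triangle} terminate.
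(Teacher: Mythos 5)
Your argument is correct, and it lands on exactly the same identity as the paper, namely $\sum_{i}e_id_{i1}=\sum_{i}e_id_{i0}-(a_0+b_0+m_0-1-c_0)$, after which both proofs close with Lemma~\ref{lemma:three-lines-main-inequality}. The route is genuinely different, though. The paper simply intersects $D_1$ with $-K_S$: since Lemma~\ref{lemma:three-lines-main-lemma} already guarantees $D_1\sim_{\mathbb{Q}}-K_S$, its degree is $3=2a_0+2b_0+m_0-1+\sum_ie_id_{i1}$, and comparing with $(\ref{equation:three-lines-degree-D-is-3})$ gives the drop in two lines. You instead bypass the statement of Lemma~\ref{lemma:three-lines-main-lemma} and work in the Picard lattice of $\tilde S$, using only $\upsilon_P^*(-K_{\tilde S})=-K_{\tilde S}$ and $\upsilon_P^*[E]=[\tilde N]$ (facts available from the proof of that lemma) to derive the per-component formula $d_{i1}=d_{i0}-\mathrm{mult}_P(C_{i0})+C_{i0}\cdot N$, and then evaluate $\Omega_0\cdot N=1-a_0-b_0+c_0$ by intersecting $D$ with $N$. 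Your careful points are all handled correctly: $\upsilon_P(\tilde C_{i0})\ne E$ because $C_{i0}\ne N$ (and $\ne\tilde L,\tilde M$ because $C_{i0}\ne L,M$), and $\tilde N=f^*N$ since $P\notin N$. What your approach buys is a finer, component-wise description of how the Geiser involution changes degrees, independent of the linear-equivalence statement; what the paper's buys is brevity, since Lemma~\ref{lemma:three-lines-main-lemma} has to be proved anyway for the descent to continue. Both are sound.
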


\begin{proof}
Since $D_1\qsim -K_{S}$ by
Lemma~\ref{lemma:three-lines-main-lemma}, we obtain
\[\begin{split}
3&=-K_{S}\cdot\left(a_0L+b_0M+(a_0+b_0+m_0-1)N+\sum_{i=1}^re_iC_{i1}\right)\\
&=2a_0+2b_0+m_0-1+\sum_{i=1}^{r}e_id_{i1}.\\
\end{split}
\]
On the other hand, we
have $a_0+b_0+c_0+\sum_{i=1}^{r}e_id_{i0}=3$ by
(\ref{equation:three-lines-degree-D-is-3}). Thus, we obtain
$$
\sum_{i=1}^{r}e_id_{i1}=\sum_{i=1}^{r}e_id_{i0}-\left(a_0+b_0+m_0-1-c_0\right)<\sum_{i=1}^{r}e_id_{i0}
$$
because $a_0+b_0+m_0-1-c_0>0$ by
Lemma~\ref{lemma:three-lines-main-inequality}.
\end{proof}

Repeating this process, we can  obtain a sequence of the effective anticanonical 
$\mathbb{Q}$-divisors 
$$D_k=a_kL+b_kM+c_kN+\Omega_k$$ on the surface
$S$ such that  each log pair
$(S,D_k)$ is not log canonical at one of the three intersection points
$L\cap M$, $L\cap N$ and $M\cap N$. Note that  
$$\Omega_k=\sum_{i=1}^re_iC_{ik},$$ 
where $C_{ik}$'s are irreducible reduced curves of degrees $d_{ik}$.
We then obtain a strictly decreasing  sequence of rational numbers
$$
\sum_{i=1}^{r}e_id_{i0}>\sum_{i=1}^{r}e_id_{i1}>\cdots>\sum_{i=1}^{r}e_id_{ik}>\cdots
$$
by Lemma~\ref{lemma:three-lines-Geiser-lowers-degree}. This is a contradiction 
since the subset
$$
\left\{\sum_{i=1}^{r}e_in_i\ \Big\vert\ n_1,n_2,\ldots,n_r\in\mathbb{N}\right\}\subset\mathbb{Q}%
$$
is discrete and bounded \red{from} below. \red{This} completes the proof of Lemma~~\ref{lemma:cubic-triangle}.

\section{$\alpha$-functions on smooth del Pezzo surfaces}
\label{sec:dP-alpha}

In this section, we prove Theorem~\ref{theorem:Park-Won}. Let $S_d$
be a smooth del Pezzo surface of degree~$d$. 

Before we proceed, we here make a simple but useful observation.

\begin{lemma}
\label{lemma:blow-down} Let $f:S_d\to S$ be the blow down of a $(-1)$-curve  $E$ on the del Pezzo surface $S_d$.   Then $S$ is a smooth
del Pezzo surface    and
$\alpha_{S_d}(P)\geqslant\alpha_S(f(P))$ for a point $P$ of $S_d$ outside the curve $E$.
\end{lemma}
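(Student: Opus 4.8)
The plan is to separate the statement into its two assertions: that $S$ is a smooth del Pezzo surface, and that $\alpha_{S_d}(P)\geqslant\alpha_S(f(P))$ for $P$ outside $E$. The first is classical and I would dispatch it in a line or two; the second is where the (modest) content lies, and it rests on the fact that $f$ is a local isomorphism away from $E$, together with the observation that pushforward of divisors preserves both effectiveness and the anticanonical $\mathbb{Q}$-linear equivalence class.

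For the first assertion: since $E$ is a $-1$-curve on the smooth surface $S_d$, Castelnuovo's criterion shows that $f$ is the blow up of a smooth point $p=f(E)$ of a smooth projective surface $S$, so $K_{S_d}=f^{*}K_S+E$ and $K_S^{2}=K_{S_d}^{2}+1=d+1>0$. For every irreducible curve $C\subset S$ with proper transform $\tilde C$ on $S_d$ one computes $-K_S\cdot C=-K_{S_d}\cdot\tilde C+\mathrm{mult}_p(C)\geqslant -K_{S_d}\cdot\tilde C>0$, using that $-K_{S_d}$ is ample; by the Nakai--Moishezon criterion $-K_S$ is ample, so $S$ is a smooth del Pezzo surface.

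For the inequality I would set $Q=f(P)$ and note that, because $P\notin E$, $f$ restricts to an isomorphism between an open neighbourhood of $P$ and an open neighbourhood of $Q$. Fix a rational number $\lambda<\alpha_S(Q)$; since log canonicity is preserved when coefficients are scaled down, $(S,\lambda D')$ is log canonical at $Q$ for every effective $\mathbb{Q}$-divisor $D'$ on $S$ with $D'\sim_{\mathbb{Q}}-K_S$. It therefore suffices to show that $(S_d,\lambda D)$ is log canonical at $P$ for every effective $\mathbb{Q}$-divisor $D$ on $S_d$ with $D\sim_{\mathbb{Q}}-K_{S_d}$, as this forces $\lambda\leqslant\alpha_{S_d}(P)$, and then letting $\lambda\to\alpha_S(Q)$ gives the claim. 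Given such a $D$, put $D_S=f_{*}D$: it is an effective $\mathbb{Q}$-divisor, and $D_S\sim_{\mathbb{Q}}f_{*}\bigl(f^{*}(-K_S)-E\bigr)=-K_S$, so $(S,\lambda D_S)$ is log canonical at $Q$. Since $Q\notin f(E)$, no component of $D_S$ through $Q$ is $f$-exceptional, so under the local isomorphism above the pair $(S_d,\lambda D)$ near $P$ is identified with $(S,\lambda D_S)$ near $Q$; log canonicity being a local condition, $(S_d,\lambda D)$ is log canonical at $P$.

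The only step that needs any care—so the "main obstacle", such as it is—is checking that $D_S=f_{*}D$ really does coincide with $D$ in a neighbourhood of $P$ under $f$: that passing to the pushforward neither changes the coefficients of the components meeting $P$ nor introduces stray local components at $Q$, and that it lands in the class $-K_S$. Once that is in hand, the transfer of log canonicity and the passage to the limit are immediate.
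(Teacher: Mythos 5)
Your proof is correct and is exactly the argument the paper has in mind: the paper's own proof of this lemma is the single line ``This is obvious,'' and your write-up simply supplies the standard details (Castelnuovo plus Nakai--Moishezon for the first assertion; pushing forward an effective anticanonical $\mathbb{Q}$-divisor and using that $f$ is a local isomorphism away from $E$ for the second). No gaps; the key checks you flag --- that $f_{*}D$ lands in $|-K_S|_{\mathbb{Q}}$ and agrees with $D$ near $P$ --- are handled correctly.
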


\begin{proof}
\red{It is easy to check that $-K_S$ is ample. The second statement immediately follows from the definition of the $\alpha$-function.}
\end{proof}

We \red{have already shown} that the
$\alpha$-function $\alpha_{\mathbb{P}^2}$ of the projective plane is the constant
function with the value $\frac{1}{3}$ (see
Example~\ref{example:alpha-P^n}) and the $\alpha$-function
$\alpha_{\mathbb{P}^1\times \mathbb{P}^1}$ of the quadric surface is the constant
function with the value $\frac{1}{2}$ (see
Example~\ref{example:alpha-product}).

\begin{lemma}\label{lemma:F1}
The $\alpha$-function $\alpha_{\mathbb{F}_1}$ on the
blow-up $\mathbb{F}_1$ of $\mathbb{P}^2$ at one point is the
constant function with the value $\frac{1}{3}$.\end{lemma}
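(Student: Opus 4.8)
The plan is to follow the template of Examples~\ref{example:alpha-P^n} and~\ref{example:alpha-product}: first I would establish the pointwise upper bound $\alpha_{\mathbb{F}_1}(P)\leqslant\frac13$ for every $P\in\mathbb{F}_1$ by exhibiting a single family of ``bad'' anticanonical divisors, and then I would get the matching lower bound for free from the already known value $\alpha(\mathbb{F}_1)=\frac13$ together with Lemma~\ref{lemma:inf-sup}.

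For the upper bound, write $f\colon\mathbb{F}_1\to\mathbb{P}^2$ for the blow-up, let $E$ be the $-1$-curve (the exceptional curve of $f$) and $h=f^{*}(\text{line})$, so that $-K_{\mathbb{F}_1}\sim 3h-E$. Given a point $P\in\mathbb{F}_1$, I would take $F$ to be the unique fibre of the ruling $\mathbb{F}_1\to\mathbb{P}^1$ through $P$; it is a smooth rational curve with $F\sim h-E$, and hence $3F+2E\sim 3(h-E)+2E\sim -K_{\mathbb{F}_1}$. Thus $D:=3F+2E$ is an effective anticanonical divisor whose irreducible component $F$ has coefficient $3$ and passes through $P$. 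Consequently, for every rational $\lambda>\frac13$ the boundary $\lambda D$ has a component of coefficient $3\lambda>1$ through $P$, so $(\mathbb{F}_1,\lambda D)$ is not log canonical at $P$; this forces $\alpha_{\mathbb{F}_1}(P)\leqslant\frac13$. For the lower bound, Theorem~\ref{theorem:GAFA} gives $\alpha(\mathbb{F}_1)=\frac13$, and Lemma~\ref{lemma:inf-sup} then yields $\alpha_{\mathbb{F}_1}(P)\geqslant\inf_{P'\in\mathbb{F}_1}\alpha_{\mathbb{F}_1}(P')=\alpha(\mathbb{F}_1)=\frac13$ for every $P$. Combining the two bounds shows that $\alpha_{\mathbb{F}_1}$ is the constant function with value $\frac13$.

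I do not expect a genuine obstacle here; the computation is essentially the same as in Examples~\ref{example:alpha-P^n} and~\ref{example:alpha-product}. The only point needing a little care is that the upper-bound construction must work \emph{uniformly} over all of $\mathbb{F}_1$ (the function is claimed constant), including points lying on $E$ — but there the divisor $D=3F+2E$ still carries the coefficient-$3$ component $F$ through $P$, so nothing special happens. The family of fibres of the ruling is exactly what makes the construction point-independent.
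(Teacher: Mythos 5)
Your proof is correct and is essentially identical to the paper's: the divisor $3F+2E$ you construct is exactly the divisor $2C+3L_P$ used in the paper (with $C=E$ the $-1$-section and $L_P=F$ the fibre through $P$), and the lower bound is obtained the same way from Theorem~\ref{theorem:GAFA} and Lemma~\ref{lemma:inf-sup}.
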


\begin{proof} Let $P$ be a given point on $\mathbb{F}_1$. Let
$\pi\colon \mathbb{F}_1\to \mathbb{P}^1$ be the $\mathbb{P}^1$-bundle
morphism onto $\mathbb{P}^1$. Let $C$ be its section with
$C^2=-1$ and let $L_P$ be the fiber of the morphism $\pi$ over
the point $\pi(P)$. Since $2C+3L_P\sim -K_{\mathbb{F}_1}$, we
have $\alpha_{\mathbb{F}_1}(P)\leqslant \frac{1}{3}$. But $\alpha(\mathbb{F}_1)=\frac{1}{3}$ by
Theorem~\ref{theorem:GAFA}. Thus, $\alpha_{\mathbb{F}_1}$ is the constant
function with the value $\frac{1}{3}$ by
Lemma~\ref{lemma:inf-sup}.
\end{proof}

The surface
$S_7$ is the blow-up of $\mathbb{P}^2$ at two distinct points  $Q_1$
and $Q_2$. Let $E$ be the proper transform of the line passing
through  $Q_1$ and $Q_2$ by the two-point blow up
$f:S_7\to\mathbb{P}^2$ with the exceptional curves $E_1$ and
$E_2$.
\begin{lemma}
\label{lemma:dP7} The $\alpha$-function  on the del
Pezzo surface  $S_7$ of degree $7$ has the following values
$$
\alpha_{S_7}(P)=\left\{%
\aligned
&1/2\ \ \ \text{if $P\not\in E$}\\
&1/3 \ \ \ \text{if $P\in E$.}\\
\endaligned\right.%
$$
\end{lemma}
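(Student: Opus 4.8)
The plan is to establish, at every point $P$ of $S_7$, matching upper and lower bounds for $\alpha_{S_7}(P)$. Throughout I will use that $f\colon S_7\to\mathbb{P}^2$ is the blow-up at $Q_1,Q_2$, so that $-K_{S_7}\sim 3H-E_1-E_2$ and $E\sim H-E_1-E_2$, where $H$ denotes the pull-back of a line; in particular $D_0:=3E+2E_1+2E_2$ is an effective anticanonical divisor on $S_7$.

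For the upper bounds I would exhibit at each $P$ an effective anticanonical $\mathbb{Q}$-divisor carrying a component of large coefficient through $P$, using the elementary fact that a log pair whose boundary contains $cC$ with $C$ smooth at $P\in C$ and $c>1$ is not log canonical at $P$. If $P\in E$, then in $D_0$ the curve $E$ appears with coefficient $3$ and is smooth at $P$, so $(S_7,\lambda D_0)$ is not log canonical at $P$ for every $\lambda>1/3$; hence $\alpha_{S_7}(P)\leqslant 1/3$. If $P\in(E_1\cup E_2)\setminus E$, then the same $D_0$ contains the relevant exceptional curve with coefficient $2$ and this curve is smooth at $P$, so $\alpha_{S_7}(P)\leqslant 1/2$. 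If $P$ lies off $E\cup E_1\cup E_2$, I would take a general line $\ell\subset\mathbb{P}^2$ through $f(P)$; its proper transform satisfies $\tilde\ell\sim H$ and is smooth at $P$, so $D:=2\tilde\ell+E$ is an effective $\mathbb{Q}$-divisor with $D\sim_{\mathbb{Q}}-K_{S_7}$ that has a coefficient-$2$ component through $P$ (while $E$ misses $P$), giving again $\alpha_{S_7}(P)\leqslant 1/2$.

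For the lower bounds, if $P\in E$ it is enough to combine Lemma~\ref{lemma:inf-sup} and Theorem~\ref{theorem:GAFA}: $\alpha_{S_7}(P)\geqslant\inf_{P'\in S_7}\alpha_{S_7}(P')=\alpha(S_7)=1/3$, so equality holds there. If $P\notin E$, I would contract the $-1$-curve $E$ by a morphism $h\colon S_7\to S$; a self-intersection computation shows that the images of $E_1$ and $E_2$ become $0$-curves and that $S$ contains no $-1$-curve, so $S\cong\mathbb{P}^1\times\mathbb{P}^1$ and $h$ is an isomorphism near $P$. Then Lemma~\ref{lemma:blow-down} and the fact that $\alpha_{\mathbb{P}^1\times\mathbb{P}^1}$ is identically $1/2$ (Example~\ref{example:alpha-product}) yield $\alpha_{S_7}(P)\geqslant\alpha_{\mathbb{P}^1\times\mathbb{P}^1}(h(P))=1/2$, matching the upper bound.

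The only step with any real content is the construction of a coefficient-$2$ component through a point off every $-1$-curve — here the divisor $2\tilde\ell+E$ — since this is what pins $\alpha_{S_7}(P)$ down to $1/2$ rather than something larger; identifying $S\cong\mathbb{P}^1\times\mathbb{P}^1$ after contracting $E$, and handling $P\in E$ through Lemma~\ref{lemma:inf-sup}, are routine. In particular, no subtle log canonical threshold estimate enters: every failure of log canonicity invoked above comes from a single boundary component of coefficient exceeding $1$.
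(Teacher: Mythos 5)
Your proposal is correct and follows essentially the same route as the paper: the upper bounds come from the anticanonical divisors $3E+2E_1+2E_2$ and $2\tilde\ell+E$ (the paper writes the latter as $f^*(2L)+E$, which also covers your separate case $P\in(E_1\cup E_2)\setminus E$ in one stroke), the lower bound $1/3$ on $E$ comes from $\alpha(S_7)=1/3$ via Lemma~\ref{lemma:inf-sup}, and the lower bound $1/2$ off $E$ comes from contracting $E$ to $\mathbb{P}^1\times\mathbb{P}^1$ and applying Lemma~\ref{lemma:blow-down}. The only cosmetic difference is your explicit three-way case split for the upper bound; the substance is identical.
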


\begin{proof}
Let $P$ be a point on $S$. Then $\alpha_{S_7}(P)\geqslant
\alpha(S)=\frac{1}{3}$ by Theorem~\ref{theorem:GAFA} and
Lemma~\ref{lemma:inf-sup}. 

If the point $P$ belongs to $E$, then $\alpha_{S_7}(P)\leqslant\frac{1}{3}$ 
since
$2E_1+2E_2+3E\sim -K_{S}$. Therefore, $\alpha_{S_7}(P)=\frac{1}{3}$.

Suppose that the point $P$ lies outside  $E$. Let $L$ be a line on $\mathbb{P}^2$ whose proper transform by the blow up $f$ passes through $P$. Since
$f^*(2L)+E$ is an effective anticanonical divisor passing through  $P$, we have  $\alpha_{S_7}(P)\leqslant
\frac{1}{2}$.

Let $g\colon S\to \mathbb{P}^1\times\mathbb{P}^1$ be the
birational morphism obtained by contracting the $(-1)$-curve $E$.
Then this morphism is an isomorphism around  $P$. Then
$\alpha_{S_7}(P)\geqslant\alpha_{\mathbb{P}^1\times\mathbb{P}^1}(g(P))$
by Lemma~\ref{lemma:blow-down}. Since $ \alpha_{\mathbb{P}^1\times\mathbb{P}^1}$ is the constant
function with the value $\frac{1}{2}$, we obtain $\alpha_{S_7}(P)=\frac{1}{2}$.
\end{proof}

\begin{lemma}
\label{lemma:dP6} The $\alpha$-function $\alpha_{S_6}$ on the del
Pezzo surface $S_6$ of degree $6$ is the constant function with
the value $\frac{1}{2}$.
\end{lemma}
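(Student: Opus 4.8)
The plan is to prove that $\alpha_{S_6}$ is the constant function $\frac{1}{2}$ by separating the easy lower bound from a uniform pointwise construction for the upper bound. The lower bound is immediate: Theorem~\ref{theorem:GAFA} gives $\alpha(S_6)=\frac{1}{2}$, so Lemma~\ref{lemma:inf-sup} yields $\alpha_{S_6}(P)\geqslant\frac{1}{2}$ for every point $P\in S_6$. (Note that Lemma~\ref{lemma:blow-down} would only give $\alpha_{S_6}(P)\geqslant\frac{1}{3}$, so the global value of Theorem~\ref{theorem:GAFA} is genuinely needed here.) Hence it suffices to produce, for each $P$, an effective anticanonical $\mathbb{Q}$-divisor $D$ such that $(S_6,\lambda D)$ fails to be log canonical at $P$ for every $\lambda>\frac{1}{2}$; this forces $\alpha_{S_6}(P)\leqslant\frac{1}{2}$.

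To construct $D$, I would realise $S_6$ as the blow up $f\colon S_6\to\mathbb{P}^2$ of three non-collinear points $Q_1,Q_2,Q_3$, write $H=f^{*}\mathcal{O}_{\mathbb{P}^2}(1)$, let $E_i$ be the exceptional curve over $Q_i$ and $L_{ij}=H-E_i-E_j$ the strict transform of the line $\overline{Q_iQ_j}$, and recall that $-K_{S_6}=3H-E_1-E_2-E_3$ and that $E_1,L_{12},E_2,L_{23},E_3,L_{13}$ form a hexagon of $-1$-curves. The pencil $|H-E_1|$ is base-point-free and defines a conic bundle $\pi_1\colon S_6\to\mathbb{P}^1$ whose only reducible fibres are $L_{12}+E_2$ and $L_{13}+E_3$. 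Given $P$, let $F$ be the fibre of $\pi_1$ through $P$ and set
\[
D=2F+L_{23}+E_1 .
\]
Then $D$ is an effective (indeed integral) divisor with $D\sim 2(H-E_1)+(H-E_2-E_3)+E_1=-K_{S_6}$, so $D$ is an effective anticanonical divisor.

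It then remains to check that $(S_6,\lambda D)$ is not log canonical at $P$ once $\lambda>\frac{1}{2}$, and this reduces to describing $D$ locally at $P$: one distinguishes whether $F$ is irreducible (a smooth conic, which happens unless $P$ lies on one of the four $-1$-curves $L_{12},E_2,L_{13},E_3$) and then records which of the curves $L_{23}$, $E_1$, and the components of $F$ actually contain $P$. Since any two of the six $-1$-curves meet in at most one point and all such points are the six vertices of the hexagon, a short incidence computation shows that in a neighbourhood of $P$ the divisor $D$ always has one of the shapes $2A$, $2A+B$, or $2A+2B$, where $A$ and $B$ are smooth at $P$ and meet transversally there whenever both occur; for each of these, $(S_6,\lambda D)$ is log canonical at $P$ if and only if $2\lambda\leqslant 1$. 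Combined with the lower bound this gives $\alpha_{S_6}(P)=\frac{1}{2}$. The only slightly delicate part will be the bookkeeping at the six vertices of the hexagon, through which no irreducible conic passes, so that $F$ is forced to be reducible and $P$ may even be its node; this remains routine, and if preferred those finitely many points can be treated separately with the variant divisor $2E_i+L_{ij}+L_{ik}+G$ for a general $G\in|H-E_i|$, which in a neighbourhood of the vertex $E_i\cap L_{ij}$ equals $2E_i+L_{ij}$ and so again has the required log canonical threshold $\frac{1}{2}$ there.
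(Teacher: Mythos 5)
Your proof is correct, and it differs from the paper's in both halves, though more in presentation than in substance. For the lower bound the paper does \emph{not} invoke $\alpha(S_6)=\tfrac12$ directly: it chooses a birational morphism $h\colon S_6\to S_7$ contracting a $-1$-curve away from $P$ so that $h(P)$ avoids the special $-1$-curve of $S_7$, and then combines Lemma~\ref{lemma:blow-down} with Lemma~\ref{lemma:dP7}. Your parenthetical claim that Lemma~\ref{lemma:blow-down} ``would only give $\geqslant\frac13$'' is therefore a mischaracterization — with a well-chosen contraction it gives exactly $\frac12$, which is the paper's route — but your alternative via Theorem~\ref{theorem:GAFA} and Lemma~\ref{lemma:inf-sup} is equally valid and arguably more direct, since $\alpha_{S_6}(P)\geqslant\inf_Q\alpha_{S_6}(Q)=\alpha(S_6)=\frac12$ requires no choice of contraction. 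For the upper bound the paper simply asserts ``one can easily check''; your explicit divisor $D=2F+L_{23}+E_1$, with $F$ the fibre of the conic bundle $|H-E_1|$ through $P$, supplies the missing construction, and the verification is sound: $D\sim -K_{S_6}$, every point of $S_6$ lies on a component of its fibre which carries coefficient $2$ in $D$ and is smooth, all pairwise intersections among $F$'s components, $L_{23}$ and $E_1$ are transversal (each relevant intersection number is $0$ or $1$), so locally $D$ is one of $2A$, $2A+B$, $2A+2B$ in simple normal crossings and the log canonical threshold at $P$ is $\frac12$ in each case. Indeed the weaker observation that $D\geqslant 2A$ for some curve $A$ smooth at $P$ already gives $\alpha_{S_6}(P)\leqslant\frac12$, so the case analysis and the variant divisor at the hexagon vertices, while correct, are not needed.
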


\begin{proof}
 Let $P$ be a given point on the del Pezzo surface $S_6$. One
can easily check \red{that} $\alpha_{S_6}(P)\leqslant\frac{1}{2}$. One the other hand,
we have a birational morphism $h:S_6\to S_7$, where $S_7$ is a
del Pezzo surface of degree $7$, such that the morphism $h$ is
an isomorphism around the point $P$ and the point $h(P)$ is not
on the $(-1)$-curve of $S_7$ connected to two different $(-1)$-curves.
Then
$\alpha_{S_6}(P)\geqslant\frac{1}{2}$ by
Lemmas~\ref{lemma:blow-down} and \ref{lemma:dP7}.
\end{proof}

\begin{lemma}
\label{lemma:dP5}The $\alpha$-function  on a del
Pezzo surface  $S_5$ of degree $5$ has the following values
$$
\alpha_{S_5}(P)=\left\{%
\aligned
&1/2 \ \ \ \text{if there is a $(-1)$-curve passing through  $P$;}\\
&2/3\ \ \ \text{if there is no $(-1)$-curve passing though  $P$.}\\
\endaligned\right.%
$$
\end{lemma}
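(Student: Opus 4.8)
The plan is to prove each of the two displayed values by establishing a matching lower and upper bound, working throughout with the realisation of $S_5$ as the blow up $f\colon S_5\to\mathbb{P}^2$ of four points $p_1,\dots,p_4$ in general position; then $S_5$ has exactly ten $-1$-curves --- the $E_i=f^{-1}(p_i)$ and the strict transforms $L_{ij}$ of the lines $\overline{p_ip_j}$, each meeting exactly three others --- and exactly five pencils of conics, namely $|f^*H-E_i|$ for $i=1,\dots,4$ and $|2f^*H-E_1-E_2-E_3-E_4|$.

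I would first dispose of the three easy bounds. For every $P$, Theorem~\ref{theorem:GAFA} and Lemma~\ref{lemma:inf-sup} give $\alpha_{S_5}(P)\geqslant\alpha(S_5)=\tfrac12$. If $P$ lies on a $-1$-curve $L$, let $L_1,L_2,L_3$ be the three $-1$-curves meeting $L$ (they pass through three distinct points of $L$); a computation in $\operatorname{Pic}(S_5)$ gives $2L+L_1+L_2+L_3\sim-K_{S_5}$, and since $L$ has coefficient $2$ here the pair $(S_5,\lambda(2L+L_1+L_2+L_3))$ is not log canonical along $L$ for $\lambda>\tfrac12$, so $\alpha_{S_5}(P)=\tfrac12$. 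If $P$ lies on no $-1$-curve, then $f(P)$ avoids all the lines $\overline{p_ip_j}$ and the $p_i$; taking for $G$ the member of $|f^*H-E_1|$ through $P$ --- an irreducible conic, smooth at $P$ --- a further computation gives the effective anticanonical $\mathbb{Q}$-divisor $\tfrac32 G+\tfrac12(L_{23}+L_{24}+L_{34}+E_1)\sim_{\mathbb{Q}}-K_{S_5}$, whose coefficient $\tfrac32$ on the smooth curve $G\ni P$ forces $\alpha_{S_5}(P)\leqslant\tfrac23$.

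It remains to prove $\alpha_{S_5}(P)\geqslant\tfrac23$ when $P$ lies on no $-1$-curve. I would argue by contradiction: suppose $(S_5,\lambda D)$ is not log canonical at $P$ for some effective anticanonical $\mathbb{Q}$-divisor $D$ and some rational $\lambda<\tfrac23$, so $\operatorname{mult}_P(\lambda D)>1$ by Lemma~\ref{lemma:mult}. Let $f'\colon\widetilde S\to S_5$ be the blow up of $P$, with exceptional curve $E$ and $\widetilde D$ the strict transform of $D$; then the log pull-back $(\widetilde S,\lambda\widetilde D+(\lambda\operatorname{mult}_P(D)-1)E)$ fails to be log canonical at some point $Q\in E$, and applying Lemma~\ref{lemma:mult} there (the coefficient of $E$ being positive) gives $\operatorname{mult}_Q(\widetilde D)>\tfrac2\lambda-\operatorname{mult}_P(D)$.

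The crucial --- and only non-formal --- ingredient is the geometric claim that for every direction $Q\in E$ at least one of the five pencils $|f^*H|$, $|2f^*H-E_i-E_j-E_k|$ has an irreducible member through $P$ tangent to $Q$ (all five do unless $Q$ is one of the finitely many directions of the lines $\overline{p_if(P)}$, and for those exceptional $Q$ the pencil $|2f^*H-E_j-E_k-E_l|$ with $i\notin\{j,k,l\}$ still does); such a member is a smooth rational curve $C\ni P$, tangent to $Q$, with $C^2=1$ and $(-K_{S_5})\cdot C=3$. If $C\not\subseteq\operatorname{Supp}(D)$, then $\widetilde C\cdot\widetilde D=(-K_{S_5})\cdot C-\operatorname{mult}_P(D)=3-\operatorname{mult}_P(D)\geqslant\operatorname{mult}_Q(\widetilde D)>\tfrac2\lambda-\operatorname{mult}_P(D)$, forcing $\lambda>\tfrac23$. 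If $C\subseteq\operatorname{Supp}(D)$, write $D=cC+\Psi$ with $c>0$ and $C$ not a component of $\Psi$; a short argument using $\Psi\geqslant0$ and the classes involved (directly from $(-K_{S_5}-cC)\cdot f^*H=3-2c\geqslant0$ when $C$ has class $2f^*H-E_j-E_k-E_l$, and when $C$ has class $f^*H$ because then each $L_{ij}$ lies in $\operatorname{Supp}(\Psi)$ with coefficient at least $c-1$ once $c>1$, forcing $6(c-1)\leqslant(-K_{S_5})\cdot\Psi=5-3c$) shows $c\leqslant\tfrac32$, hence $\lambda c\leqslant1$, so Lemma~\ref{lemma:adjunction} applied to the smooth component $\widetilde C$ of the log pull-back at $Q$ gives $\widetilde C\cdot\bigl(\lambda\widetilde\Psi+(\lambda\operatorname{mult}_P(D)-1)E\bigr)=3\lambda-1>1$, again $\lambda>\tfrac23$. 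Either way we contradict $\lambda<\tfrac23$, which proves $\alpha_{S_5}(P)\geqslant\tfrac23$ and finishes the proof. I expect this geometric claim to be the main point of difficulty: it says precisely that avoiding every $-1$-curve makes an irreducible rational cubic through $P$ available in each tangent direction, which is what the intersection estimate needs and what genuinely separates $S_5$ from its one-point blow up (a del Pezzo surface of degree $4$), for which the analogue of Theorem~\ref{theorem:technical} fails; the rest is bookkeeping with intersection numbers and with the coefficient condition required by Lemma~\ref{lemma:adjunction}.
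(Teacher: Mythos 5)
Your proof is correct, and three of its four bounds coincide with the paper's: the inequality $\alpha_{S_5}(P)\geqslant\tfrac12$ from Theorem~\ref{theorem:GAFA} and Lemma~\ref{lemma:inf-sup}, the bound $\alpha_{S_5}(P)\leqslant\tfrac12$ on a $-1$-curve via a non-reduced anticanonical divisor such as $2L+L_1+L_2+L_3$, and the bound $\alpha_{S_5}(P)\leqslant\tfrac23$ off the $-1$-curves via a $\mathbb{Q}$-divisor with coefficient $\tfrac32$ on a conic through $P$ (the paper phrases this as $3C_i+E_1^i+\dots+E_4^i\in|-2K_{S_5}|$ for each of the five conics $C_i$ through $P$). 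Where you genuinely diverge is the key lower bound $\alpha_{S_5}(P)\geqslant\tfrac23$ at a point $P$ lying on no $-1$-curve. The paper's argument is a short reduction: precisely because $P$ avoids the $-1$-curves, the blow up of $S_5$ at $P$ is a smooth del Pezzo surface $S_4$ of degree $4$, the divisor $\widetilde D+(\operatorname{mult}_P(D)-1)E$ is an effective anticanonical $\mathbb{Q}$-divisor there, and $\alpha(S_4)=\tfrac23$ by Theorem~\ref{theorem:GAFA}; comparing the coefficient $\lambda\operatorname{mult}_P(D)-1$ of $E$ in the log pull-back with the larger coefficient $\lambda(\operatorname{mult}_P(D)-1)$ of the pair known to be log canonical yields the contradiction at once. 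You instead run a self-contained local analysis: for each tangent direction $Q$ you produce an irreducible rational curve of anticanonical degree $3$ through $P$ tangent to $Q$ (a line of $\mathbb{P}^2$ missing $p_1,\dots,p_4$, or, for the four exceptional directions, a conic through three of them), and you split on whether that curve is a component of $D$, using the bound $c\leqslant\tfrac32$ to make Lemma~\ref{lemma:adjunction} applicable. Both routes are valid; the paper's is much shorter but imports the value $\alpha(S_4)=\tfrac23$ from Theorem~\ref{theorem:GAFA}, whereas yours needs only the general position of the four blown-up points, adjunction, and elementary intersection theory. Two minor quibbles that do not affect correctness: the linear systems $|f^*H|$ and $|2f^*H-E_i-E_j-E_k|$ are nets rather than pencils, and you should state explicitly that the conic through $p_j,p_k,p_l,f(P)$ tangent to the direction of $p_i$ is irreducible because a reducible member would force three of its four base points onto a single line, contradicting general position and the assumption that $f(P)$ lies on no line $\overline{p_ap_b}$.
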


\begin{proof}
Let $P$ be a point on $S_5$. Suppose that $P$ lies on a $(-1)$-curve. Then there
exists an effective anticanonical divisor not reduced at $P$. Thus,  $\alpha_{S_5}(P)\leqslant\frac{1}{2}$. 
Meanwhile, we have $\frac{1}{2}=\alpha(S_5)\leqslant\alpha_{S_5}(P)$ by
Lemma~\ref{lemma:inf-sup} and
Theorem~\ref{theorem:GAFA}.  Therefore,  $\alpha_{S_5}(P)=\frac{1}{2}$.

Suppose that the point $P$ is not contained in
any $(-1)$-curve.  Then there exist exactly five
irreducible smooth rational curves $C_1, \ldots, C_5$ passing through the point $P$ with  $-K_{S}\cdot C_i=2$ for each $i$
(cf. the proof of \cite[Lemma~5.8]{Ch07a}). Moreover, for every $C_i$,  there are four irreducible smooth
rational curves $E_1^i$, $E_2^i$, $E_3^i$ and $E_4^i$ such that
$3C_i+E_1^i+E_2^i+E_3^i+E_4^i$  belongs to the bi-anticanonical linear system
$|-2K_{S_5}|$ (cf.
Remark~\ref{remark:del-Pezzo-high-degree}). Therefore, 
$\alpha_{S_5}(P)\leqslant\frac{2}{3}$.

Suppose that $\alpha_{S_5}(P)<\frac{2}{3}$. Then there is an
effective anticanonical $\mathbb{Q}$-divisor $D$ such that $(S, \lambda D)$ is
not log canonical at  $P$ for some positive rational
number $\lambda<\frac{2}{3}$. Then
$\mathrm{mult}_{P}(D)>\frac{1}{\lambda}$ by
Lemma~\ref{lemma:mult}. Let $f\colon S_4\to S_5$ be the blow
up of  $S_5$ at  $P$ with the exceptional curve $E$ and  let $\tilde{D}$ be the proper transform of
the divisor $D$ on  $S_4$. Then the surface $S_4$ is a smooth del Pezzo surface of degree $4$. 
We have $$
K_{S_4}+\lambda\tilde{D}+\left(\lambda\mathrm{mult}_{P}(D)-1\right)E=f^{*}\left(K_{S_5}+\lambda D\right),%
$$
which implies that the log pair $(S_4,
\lambda\tilde{D}+(\lambda\mathrm{mult}_{P}(D)-1)E)$ is not log
canonical.

On the other hand, the log pair
$(S_4,\lambda\tilde{D}+\lambda(\mathrm{mult}_{P}(D)-1)E)$
is  log canonical because the divisor
$\tilde{D}+(\mathrm{mult}_{P}(D)-1)E$ is an effective anticanonical $\mathbb{Q}$-divisor of $S_4$ and 
$\alpha(S_4)=\frac{2}{3}$ by
Theorem~\ref{theorem:GAFA}.  However, \red{this} is absurd because $\lambda(\mathrm{mult}_{P}(D)-1)>\lambda\mathrm{mult}_{P}(D)-1$.
\end{proof}

\begin{lemma}
\label{lemma:dP4} 
The $\alpha$-function  on a del
Pezzo surface  $S_4$ of degree $4$ has the following values
$$
\alpha_{S_4}(P)=\left\{%
\aligned &2/3\ \ \ \aligned & \text{if $P$ is on a $(-1)$-curve;}
 \endaligned\\%
& 3/4\ \ \ \aligned & \aligned & \text{if there is an
effective anticanonical divisor that consists of}\\
& \text{two $0$-curves \red{meeting} tangentially at $P$;}\\ \endaligned
 \endaligned\\%
&5/6\ \ \ \text{otherwise.}\\
\endaligned\right.%
$$
\end{lemma}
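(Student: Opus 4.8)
The plan is to reduce everything, by blowing up $P$, to the cubic-surface case already settled in Theorem~\ref{theorem:technical} (equivalently, Corollary~\ref{corollary:technical}). Assume first that $P$ lies on a $-1$-curve $E$. Then $\alpha_{S_4}(P)\geqslant\alpha(S_4)=\tfrac23$ by Theorem~\ref{theorem:GAFA} and Lemma~\ref{lemma:inf-sup}. For the opposite inequality, choose five pairwise disjoint $-1$-curves $E_1,\dots,E_5$, each meeting $E$, with $3E+E_1+\dots+E_5\sim-2K_{S_4}$ (such a configuration exists for any line on $S_4$; cf. Remark~\ref{remark:del-Pezzo-high-degree}). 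Then $D=\tfrac32E+\tfrac12\sum_iE_i$ is an effective anticanonical $\mathbb{Q}$-divisor which near a general point of $E$ equals $\tfrac32E$, so the log pair $(S_4,\lambda D)$ is not log canonical at $P$ whenever $\lambda>\tfrac23$; hence $\alpha_{S_4}(P)\leqslant\tfrac23$.

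From now on $P$ lies on no $-1$-curve, so the blow up $f\colon\widetilde S\to S_4$ of $P$ is a \emph{smooth cubic surface}, with exceptional line $E$. The conics $C'$ with $E+C'\sim-K_{\widetilde S}$ form the pencil $|-K_{\widetilde S}-E|$, and restricting it to $E$ gives a degree-$2$ morphism $E\to\mathbb{P}^1$ with exactly two ramification points $Q_1,Q_2$. At $Q_j$ the fibre $C_{Q_j}$ is tangent to $E$; if $C_{Q_j}$ is irreducible, then $T_{Q_j}=E+C_{Q_j}$ is a tacnodal hyperplane section of $\widetilde S$ and $H_j:=f(C_{Q_j})$ is an irreducible \emph{cuspidal} member of $|-K_{S_4}|$ singular at $P$; if $C_{Q_j}$ is reducible, then $Q_j$ is an Eckardt point of $\widetilde S$, and $f$ sends the two lines through $Q_j$ to two $0$-curves on $S_4$ meeting tangentially at $P$ — and conversely every tangential pair of $0$-curves on $S_4$ through $P$ arises this way. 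Since the log canonical threshold at $P$ of a tacnodal, resp. cuspidal, plane curve equals $\tfrac34$, resp. $\tfrac56$ (see \cite[Proposition~3.2]{Pa01}), this already yields the upper bounds: $\alpha_{S_4}(P)\leqslant\tfrac34$ if $S_4$ has a tangential pair of $0$-curves at $P$, and $\alpha_{S_4}(P)\leqslant\tfrac56$ otherwise, because in that case $C_{Q_1}$ and $C_{Q_2}$ are both irreducible, so there is a cuspidal anticanonical curve at $P$.

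For the matching lower bound put $\mu=\tfrac34$ in the first subcase and $\mu=\tfrac56$ in the second, and suppose that $(S_4,\lambda D)$ is not log canonical at $P$ for some effective anticanonical $\mathbb{Q}$-divisor $D$ and some $\lambda<\mu$. Since $\lambda<1$, the pair $(S_4,D)$ is not log canonical at $P$, so $\operatorname{mult}_P(D)>1$ by Lemma~\ref{lemma:mult}, and also $\lambda\operatorname{mult}_P(D)>1$; hence after blowing up $P$ the log pull-back $\lambda\widetilde D+(\lambda\operatorname{mult}_P(D)-1)E$ is dominated coefficient-wise (using $\lambda\leqslant1$) by $\lambda\mathcal D$, where $\mathcal D=\widetilde D+(\operatorname{mult}_P(D)-1)E\sim_{\mathbb{Q}}-K_{\widetilde S}$ is effective. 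Therefore $(\widetilde S,\lambda\mathcal D)$, and a fortiori $(\widetilde S,\mathcal D)$, is not log canonical at some point $Q\in E$, and Corollary~\ref{corollary:technical} applied to the cubic surface $\widetilde S$ gives that $T_Q=E+C_Q$ is not log canonical at $Q$ and $\operatorname{Supp}(\mathcal D)\supseteq\operatorname{Supp}(T_Q)$. As $Q$ lies on the line $E$, necessarily $Q\in\{Q_1,Q_2\}$; pushing the inclusion of supports down to $S_4$ we conclude that $\operatorname{Supp}(D)$ contains all components of an effective anticanonical divisor $T$ which is either a cuspidal anticanonical curve $H_j$ or a tangential pair $C_1+C_2$ of $0$-curves, and $\lambda<\mu\leqslant\mathrm{lct}_P(S_4,T)$, so $(S_4,\lambda T)$ is log canonical at $P$. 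Lemma~\ref{lemma:convexity} now produces an effective anticanonical $\mathbb{Q}$-divisor $D'$ with $(S_4,\lambda D')$ still not log canonical at $P$ but with $\operatorname{Supp}(D')$ omitting a component of $T$. Running the argument again with $D'$ in place of $D$, and using that $\operatorname{Supp}$ only shrinks, that the new non-log-canonical point is again in $\{Q_1,Q_2\}$, and that there are only finitely many admissible divisors $T$ (at most two, one per ramification point), we eventually reach a divisor whose support contains no admissible $T$, contradicting Corollary~\ref{corollary:technical}. Hence $\alpha_{S_4}(P)\geqslant\mu$, which with the upper bounds gives $\alpha_{S_4}(P)=\mu$.

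The step I expect to be the main obstacle is the termination of this last iteration: one must check that each application of Lemma~\ref{lemma:convexity} keeps the non-log-canonical point among the two ramification points of the conic pencil on $E$ and strictly decreases a suitable discrete quantity (for instance the number of admissible divisors $T$ still contained in $\operatorname{Supp}(D)$) — a rigid, finite analogue of the Geiser-involution degree-dropping argument of Section~\ref{sec:three-lines}. A secondary point is to confirm that $(S_4,D)$ cannot be non-log-canonical along an entire curve through $P$: since $P$ avoids all $-1$-curves and an irreducible curve of anticanonical degree at most $3$ with a point of multiplicity $\geqslant2$ must be degenerate, the coefficient $\operatorname{mult}_P(D)-1$ of $E$ stays within the range the argument requires.
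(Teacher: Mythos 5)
Your argument is correct and follows essentially the same route as the paper: the $2/3$ case via the bi-anticanonical divisor $3L+\sum E_i$, and the remaining cases by blowing up $P$ to a smooth cubic surface and invoking Corollary~\ref{corollary:technical} together with Lemma~\ref{lemma:convexity} to strip components of the finitely many anticanonical divisors that are non-log-canonical at $P$. The only difference is one of ordering --- the paper performs the convexity reduction against all such divisors \emph{before} blowing up, whereas you interleave it with the blow-up --- and the termination worry you flag resolves exactly as you suggest, since $\mathrm{Supp}(D)$ only shrinks under Lemma~\ref{lemma:convexity} and there are at most two admissible divisors $T$ (one per ramification point of the conic pencil on $E$).
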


\begin{proof}
Let $P$ be a point on $S_4$. 
If the point $P$ lies on a $(-1)$-curve $L$, then there are mutually disjoint five $(-1)$-curves
$E_1,\ldots,E_5$ that intersect $L$. Let $h : S_4\rightarrow
\mathbb{P}^2$ be the contraction of all $E_i$'s. Since $h(L)$ is
a conic in $\mathbb{P}^2$, we see that $3L+\sum_{1\leqslant i\leqslant
5}E_i$ is a member in the linear system $|-2K_{S_4}|$ (cf. Remark~\ref{remark:del-Pezzo-high-degree}). This means that
$\alpha_{S_4}(P)\leqslant\frac{2}{3}$. 
Therefore,
$\alpha_{S_4}(P)=\frac{2}{3}$ since
$\alpha(S_4)\leqslant\alpha_{S_4}(P)$ by Lemma~\ref{lemma:inf-sup}
and $\alpha(S_4)=\frac{2}{3}$ by Theorem~\ref{theorem:GAFA}.

Suppose that the point $P$ does not lie on a $(-1)$-curve. Put
$\omega=\frac{3}{4}$ in the case when there is an effective
anticanonical divisor that consists of two $0$-curves intersecting tangentially at  the point $P$ and put
$\omega=\frac{5}{6}$ otherwise. 

 One can easily find an effective anticanonical  divisor
$F$ on $S_4$ such that $(S_4,\lambda F)$ is not log canonical at 
$P$ for every positive rational number $\lambda>\omega$ (see \cite[Proposition~3.2]{Pa01}). This
shows that $\alpha_{S_4}(P)\leqslant\omega$.  Moreover, it is easy to check that the log pair 
 $(S_4,\omega C)$ is log canonical at 
$P$ for each $C\in|-K_{S_4}|$. 

Suppose $\alpha_{S_4}(P)<\omega$. Then there is an
effective anticanonical $\mathbb{Q}$-divisor $D$ such that $(S, \omega D)$ is
not log canonical at  $P$. 
Note that there are only finitely many effective anticanonical divisors $C_1,\ldots, C_k$ such that  each $(S_4, C_i)$ is not log canonical at  $P$.
Applying
Lemma~\ref{lemma:convexity}, we may assume that for each $i$ at least one
irreducible component of $\mathrm{Supp}(C_i)$ is not contained in the support of $D$.

Let $f\colon S_3\to S_4$ be the blow up of the surface $S_4$ at  $P$ with the exceptional curve $E$ and let
$\tilde{D}$ be the proper transform of the divisor $D$ on $S_3$.
Then $S_3$ is a smooth cubic surface in $\mathbb{P}^3$ and $E$ is a line in $S_3$. Moreover, the log pair $(S_3,
\tilde{D}+(\mathrm{mult}_{P}(D)-1)E)$ is not log canonical at some
point $Q$ on $E$ because
the log pair $(S_4,D)$ is not log canonical at   $P$.

Let $T_Q$ be the tangent hyperplane section of the cubic surface $S_3$ at  $Q$. Note that the divisor $T_Q$ contains the line $E$.
Since $\tilde{D}+(\mathrm{mult}_{P}(D)-1)E$ is an effective anticanonical $\mathbb{Q}$-divisor on $S_3$, it follows from 
Corollary~\ref{corollary:technical} 
that the log pair $(S_3, T_Q)$ is not log canonical at  $Q$ and the support of $\tilde{D}$ contains all the irreducible
components of $T_Q$. In fact, it follows that the divisor $T_Q$ is either a union of three lines meeting at  $Q$ or a union of a line and a conic intersecting tangentially at  $Q$. The divisor $f(T_Q)$ is an effective anticanonical divisor on $S_4$ such that the log pair $(S_4, f(T_Q))$ is not log canonical at  $P$. This contradicts our assumption since 
the support of $D$ contains all the irreducible components of the divisor 
$f(T_Q)$. \end{proof}

Consequently, Theorem~\ref{theorem:Park-Won} follows from
Examples~\ref{example:alpha-P^n} and \ref{example:alpha-product},
and Lemmas~\ref{lemma:F1}, \ref{lemma:dP7}, \ref{lemma:dP6},
\ref{lemma:dP5} and \ref{lemma:dP4}.

\appendix

\section{Appendix}
\label{sec:KPZ}

This appendix is devoted to the proof of Lemma~\ref{lemma:KPZ-cubic}.  The proof originates  from  \cite{KPZ11a} and \cite{KPZ12b}, where the proof is presented dispersedly. For the readers' convenience,   we
give a detailed and streamlined one here.

Let $S$ be a smooth del Pezzo surface of degree at most $4$.
Suppose that $S$ contains a $(-K_S)$-polar cylinder, i.e.,
there is an open affine subset $U\subset S$ and an effective anticanonical 
$\mathbb{Q}$-divisor $D$ such that
$U=S\setminus\mathrm{Supp}(D)$ and $U\cong Z\times \mathbb{A}^1$
for some smooth rational affine curve $Z$. Put
$D=\sum_{i=1}^{r}a_i D_i$, where each $D_i$ is an irreducible
reduced curve and each $a_i$ is a positive rational number.

\begin{lemma}[\red{{\cite[Lemma~4.4]{KPZ12b}}}]
\label{lemma:KPZ-r}
The number of the irreducible components of the divisor $D$ is not smaller than the rank of the Picard group of $S$, 
i.e.,
$r\geqslant\mathrm{rk}\,\mathrm{Pic}(S)=10-K_{S}^2\geqslant 6$.
\end{lemma}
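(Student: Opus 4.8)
The plan is to extract the bound purely from the cylinder structure via the localization exact sequence for Picard groups, reducing the whole statement to the vanishing of $\mathrm{Pic}(U)$. The anticanonical hypothesis $D\sim_{\mathbb{Q}}-K_S$ and the actual coefficients $a_i$ will play no role; only the identity $U=S\setminus(D_1\cup\cdots\cup D_r)$ and the structure of $\mathrm{Pic}(S)$ matter.

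First I would record the excision sequence for the smooth surface $S$ and its open subset $U=S\setminus\mathrm{Supp}(D)$. Since $S$ and $U$ are smooth, $\mathrm{Pic}=\mathrm{Cl}$ on both, and removing the prime divisors $D_1,\ldots,D_r$ gives an exact sequence
$$
\bigoplus_{i=1}^{r}\mathbb{Z}\,[D_i]\ \longrightarrow\ \mathrm{Pic}(S)\ \longrightarrow\ \mathrm{Pic}(U)\ \longrightarrow\ 0,
$$
where the left map sends $[D_i]$ to its class in $\mathrm{Pic}(S)$ and the right map is restriction. Hence, once one knows $\mathrm{Pic}(U)=0$, the classes $[D_1],\ldots,[D_r]$ generate $\mathrm{Pic}(S)$. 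As $S$ is a del Pezzo surface, $\mathrm{Pic}(S)$ is a free abelian group of rank $10-K_S^2$, and a free module of rank $n$ cannot be generated by fewer than $n$ elements; therefore $r\geqslant 10-K_S^2$. Since $\deg S=K_S^2\leqslant 4$, this gives $r\geqslant 6$, which is the assertion.

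It remains to prove $\mathrm{Pic}(U)=0$, and this is where I would use the cylinder isomorphism $U\cong Z\times\mathbb{A}^1$. Because $Z$ is smooth, $\mathbb{A}^1$-homotopy invariance of the Picard group yields $\mathrm{Pic}(Z\times\mathbb{A}^1)\cong\mathrm{Pic}(Z)$. Finally $Z$ is a smooth rational affine curve, hence a nonempty open subscheme of $\mathbb{P}^1$ obtained by deleting at least one point; applying the same excision sequence to $Z\subset\mathbb{P}^1$ and using that $\mathrm{Pic}(\mathbb{P}^1)=\mathbb{Z}$ is generated by the class of a point shows $\mathrm{Pic}(Z)=0$. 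Combining the two identities gives $\mathrm{Pic}(U)=0$, completing the argument.

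The only step requiring genuine care — the ``main obstacle'', such as it is — is the computation $\mathrm{Pic}(U)=0$: one must make sure the cylinder factor $Z$ really is an affine rational curve, so that its Picard group vanishes, and invoke $\mathbb{A}^1$-invariance of $\mathrm{Pic}$ in the form valid for the smooth factor $Z$. Everything else is formal: the excision sequence for divisor class groups, the freeness and rank $10-K_S^2$ of $\mathrm{Pic}$ of a del Pezzo surface, and the elementary fact about minimal numbers of generators of a free abelian group.
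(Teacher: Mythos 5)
Your proof is correct and takes essentially the same approach as the paper: the localization exact sequence $\bigoplus_{i}\mathbb{Z}[D_i]\to\mathrm{Pic}(S)\to\mathrm{Pic}(U)\to 0$ combined with $\mathrm{Pic}(U)=0$. The paper simply asserts $\mathrm{Pic}(U)=0$, whereas you supply the (correct) verification via $\mathbb{A}^1$-invariance of $\mathrm{Pic}$ and the vanishing of the Picard group of a smooth rational affine curve.
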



To prove Lemma~\ref{lemma:KPZ-cubic}, we must show
that there exists a point $P\in S$ such that
\begin{itemize}
\item the log pair $(S,D)$ is not log canonical at the point $P$;%
\item if there exists a \emph{unique} divisor $T$ in the anticanonical  linear system $|-K_S|$ such that the log pair $(S, T)$ is not log canonical at $P$, then there is an effective anticanonical $\mathbb{Q}$-divisor $D'$   on the surface $S$ such that 
\begin{itemize}
\item the log pair $(S, D')$ is not log canonical at  $P$;
\item  
the support of the divisor $T$ is not contained in the support of $D'$.
\end{itemize}
\end{itemize}

The natural projection $U\cong Z\times\mathbb{A}^1\to Z$ induces a
rational map $\pi\colon S\dasharrow\mathbb{P}^1$ given by
a  pencil $\mathcal{L}$ on the surface $S$. Then either
$\mathcal{L}$ is base-point-free or its base locus consists of a
single point.

\begin{lemma}[{\cite[Lemma~4.2]{KPZ12b}}]
\label{lemma:KPZ-point} The pencil $\mathcal{L}$ is not base-point-free.
\end{lemma}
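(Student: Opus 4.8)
The plan is to suppose $\mathcal{L}$ is base-point-free and derive a contradiction from numerical data attached to $D$. If $\mathcal{L}$ has no base point, then $\pi$ is a morphism $\pi\colon S\to\mathbb{P}^{1}$ with $\mathcal{L}=\pi^{*}|\mathcal{O}_{\mathbb{P}^{1}}(1)|$, so its general fibre $F$ is the general member of $\mathcal{L}$. By construction the member of $\mathcal{L}$ over a general point $z\in Z$ is the closure of the affine line $\{z\}\times\mathbb{A}^{1}$, so $F$ is irreducible and reduced and contains an open subset isomorphic to $\mathbb{A}^{1}$. First I would record the elementary consequences: $F^{2}=0$, and, since $-K_{S}$ is ample while $F$ is a non-zero effective divisor, adjunction forces $p_{a}(F)=0$, so $F\cong\mathbb{P}^{1}$ and $-K_{S}\cdot F=2$. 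Moreover $F\cap U=\{z\}\times\mathbb{A}^{1}$, so a general fibre $F$ meets $\mathrm{Supp}(D)$ in exactly one point $p_{F}$, and $p_{F}$ varies with $F$ because distinct fibres of $\pi$ are disjoint.

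Next I would determine the shape of $D$ with respect to $\pi$. Since the horizontal part of $D$ must pass through the moving point $p_{F}$ for every general $F$, the divisor $D$ has a \emph{unique} horizontal component $D_{1}$; furthermore $D_{1}\cap F$ is a single point for general $F$, so $\pi|_{D_{1}}$ has degree $1$ (a non-constant map to $\mathbb{P}^{1}$ is generically étale in characteristic $0$), i.e. $D_{1}$ is a section of $\pi$ and $D_{1}\cong\mathbb{P}^{1}$, $D_{1}\cdot F=1$. Comparing with $D\cdot F=-K_{S}\cdot F=2$ forces the coefficient of $D_{1}$ in $D$ to equal $2$, so $D=2D_{1}+D_{v}$ with $D_{v}$ effective and $\pi$-vertical; the case $D_{v}=0$ is ruled out for $d\leqslant 4$, since it would give $-K_{S}\qsim 2D_{1}$, whence $K_{S}^{2}=4D_{1}^{2}$, impossible by integrality for $d=1,2,3$ and because $-K_{S_{4}}$ is not $2$-divisible in the Picard group for $d=4$. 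Intersecting $D_{v}\qsim -K_{S}-2D_{1}$ with the ample divisor $-K_{S}$ then gives $-K_{S}\cdot D_{1}<\tfrac{d}{2}$; as $-K_{S}\cdot D_{1}$ is a positive integer this is already impossible for $d\leqslant 2$, and for $d\in\{3,4\}$ it yields $-K_{S}\cdot D_{1}=1$.

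It then remains to contradict the case $d\in\{3,4\}$. Since $D\qsim -K_{S}$ and $D_{1}\cong\mathbb{P}^{1}$ with $-K_{S}\cdot D_{1}=1$, adjunction gives $D_{1}^{2}=-1$, hence $D_{v}\cdot D_{1}=(D-2D_{1})\cdot D_{1}=-K_{S}\cdot D_{1}-2D_{1}^{2}=3$. On the other hand, write $D_{v}=\sum_{j}c_{j}v_{j}$ with the $v_{j}$ distinct irreducible $\pi$-vertical curves and $c_{j}>0$. Each $v_{j}$ is a component of a fibre of $\pi$, so $v_{j}\cdot D_{1}\leqslant F\cdot D_{1}=1$, while $-K_{S}\cdot v_{j}\geqslant 1$ for every $j$ because $-K_{S}$ is ample. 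Therefore
$$
3=D_{v}\cdot D_{1}=\sum_{j}c_{j}\,(v_{j}\cdot D_{1})\leqslant\sum_{j}c_{j}\leqslant\sum_{j}c_{j}\,(-K_{S}\cdot v_{j})=-K_{S}\cdot D_{v}=d-2\leqslant 2,
$$
a contradiction. Hence $\mathcal{L}$ is not base-point-free. The step I expect to require the most care is the second paragraph: establishing that a general fibre meets the boundary $\mathrm{Supp}(D)$ in a single \emph{moving} point, so that $D$ has exactly one horizontal component, that it is a section, and that it occurs with coefficient $2$. Once this structural description is available, the ampleness of $-K_{S}$ and the absence of irreducible curves of self-intersection $\leqslant -2$ on a del Pezzo surface reduce the rest to intersection-number bookkeeping.
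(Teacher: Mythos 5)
Your argument is correct, and it shares its first (and main) step with the paper's proof: assuming $\mathcal{L}$ is base-point-free, the general fibre $F$ is a smooth rational curve with $-K_{S}\cdot F=2$ meeting $\mathrm{Supp}(D)$ in a single moving point, which forces $D$ to have exactly one horizontal component, a section of $\pi$ occurring with coefficient $2$. Where you diverge is the final contradiction. The paper stops as soon as the coefficient $2$ appears: a component of coefficient $2$ in an effective anticanonical $\mathbb{Q}$-divisor gives $\alpha(S)\leqslant\tfrac{1}{2}$, contradicting the values $\alpha(S_d)>\tfrac{1}{2}$ for $d\leqslant 4$ supplied by Theorem~\ref{theorem:GAFA}. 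You instead stay inside intersection theory: $-K_{S}\cdot D_{v}>0$ pins down $-K_{S}\cdot D_{1}=1$ (already impossible for $d\leqslant 2$), adjunction gives $D_{1}^{2}=-1$ and hence $D_{v}\cdot D_{1}=3$, while the componentwise bounds $v_{j}\cdot D_{1}\leqslant 1$ and $-K_{S}\cdot v_{j}\geqslant 1$ force $D_{v}\cdot D_{1}\leqslant -K_{S}\cdot D_{v}=d-2\leqslant 2$. Your route is longer but entirely self-contained, avoiding the nontrivial external computation of the $\alpha$-invariants of low-degree del Pezzo surfaces, whereas the paper's finish is a one-liner given Theorem~\ref{theorem:GAFA}. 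Two small points you should make explicit if you write this up: the uniqueness of the horizontal component holds because two distinct horizontal components would both contain the moving point $p_{F}$ and hence meet in infinitely many points; and the degree-one map $\pi|_{D_{1}}\colon D_{1}\to\mathbb{P}^{1}$ is an isomorphism (Zariski's main theorem), so $D_{1}$ is smooth and the adjunction computation $D_{1}^{2}=-1$ is legitimate.
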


\begin{proof}
Suppose that the pencil $\mathcal{L}$ is base-point-free. Then
$\pi$ is a morphism, which implies that there exists exactly one
irreducible component of $\mathrm{Supp}(D)$ that does not lie in
a fiber  of $\pi$. Moreover, this component is a section. Without loss of generality, we may assume
that this component is $D_r$. Let $L$ be a sufficiently general
curve in  $\mathcal{L}$. Then
$$
2=-K_{S}\cdot L=D\cdot L=\sum_{i=1}^{r}a_i D_i\cdot L=a_rD_r\cdot L,
$$
and hence $a_r=2$.  \red{This implies that } $\alpha(S)\leqslant\frac{1}{2}$. However, \red{this} contradicts Theorem~
\ref{theorem:GAFA} since the degree of the surface $S$ is at most $4$.
\end{proof}

Denote the unique base point of the pencil $\mathcal{L}$  by $P$.
Let us show that  $P$ is the point we are looking for.
Resolving the base locus of the pencil
$\mathcal{L}$, we obtain a commutative diagram
$$
\xymatrix{ &W\ar[dl]_f\ar[dr]^g& \\
S\ar@{-->}[rr]^{\pi}&&\mathbb{P}^1,}
$$
where $f$ is a composition of blow ups at smooth points over  $P$ and $g$
is a morphism whose general fiber is a smooth rational curve.
Denote by $E_1, \ldots, E_n$ the exceptional curves of the
birational morphism $f$. Then there exists exactly one curve among
them that does not lie in the fibers of the morphism $g$. Without loss of generality, we may assume that
this curve is $E_n$. Then $E_n$ is a section of the morphism $g$.

For every $D_i$, denote by $\tilde{D}_i$ its proper transform on
the surface $W$. Then every curve $\tilde{D}_i$ lies in a fiber
of the morphism $g$. 

\red{The following lemma is a bit  stronger version of \cite[Lemma~4.6]{KPZ12b} even though its proof is almost the same as that of  \cite[Lemma~4.6]{KPZ12b}. }
\begin{lemma}[\red{{cf. \cite[Lemma~4.6]{KPZ12b}}}]
\label{lemma:KPZ-special} For every effective anticanonical 
$\mathbb{Q}$-divisor $H$ with 
$\mathrm{Supp}(H)\subseteq\mathrm{Supp}(D)$,  the
log pair $(S,H)$ is not log canonical at the point $P$.
\end{lemma}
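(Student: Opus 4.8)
The plan is to pass to the resolution $W$ and compute the coefficient with which the section $E_n$ enters the log pull-back of $(S,H)$; this coefficient will turn out to be $2$, \emph{no matter which} admissible $H$ we take, which immediately forces $(S,H)$ to be non-log-canonical at the point $P$.

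First I would collect the relevant intersection numbers on $W$ against a general fibre $F$ of $g$. Since $F$ is a smooth rational curve with $F^2=0$, adjunction gives $-K_W\cdot F=2$. Writing $K_W=f^{*}K_S+\sum_{j=1}^{n}b_jE_j$ with $b_j\geqslant 1$, and using that $E_j$ lies in a fibre of $g$ for $j<n$ while $E_n$ is a section, we have $E_j\cdot F=0$ for $j<n$ and $E_n\cdot F=1$, whence $f^{*}(-K_S)\cdot F=-K_W\cdot F+b_n(E_n\cdot F)=2+b_n$. On the other hand, because $\mathrm{Supp}(H)\subseteq\mathrm{Supp}(D)$, every irreducible component of the proper transform $\tilde H$ of $H$ on $W$ is one of the curves $\tilde D_i$, hence lies in a fibre of $g$, so that $\tilde H\cdot F=0$. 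Writing $f^{*}H=\tilde H+\sum_{j=1}^{n}c_jE_j$ and using $H\sim_{\mathbb{Q}}-K_S$ (so that $f^{*}H\cdot F=f^{*}(-K_S)\cdot F$), I obtain $c_n=f^{*}H\cdot F=2+b_n$.

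Now the identity $f^{*}(K_S+H)=K_W+\tilde H+\sum_{j=1}^{n}(c_j-b_j)E_j$ shows that $E_n$ occurs in the log pull-back of $(S,H)$ with coefficient $c_n-b_n=2>1$. Since $f$ is a composition of blow-ups of points lying over $P$, we have $f(E_n)=P$, and therefore the pair $(S,H)$ is not log canonical at $P$, as asserted. The argument is short, and there is no real obstacle; the only point needing care is that both $f^{*}(-K_S)\cdot F$ and $c_n$ are computed with no reference to the particular divisor $H$ (the term $b_n$ cancels in $c_n-b_n$), which is exactly why the conclusion holds simultaneously for all $H$ with $\mathrm{Supp}(H)\subseteq\mathrm{Supp}(D)$ — and the sole place this hypothesis is used is to get the vanishing $\tilde H\cdot F=0$.
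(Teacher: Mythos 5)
Your proof is correct and is essentially the paper's own argument: both pass to the resolution $W$, intersect the log pull-back with a general fibre of $g$, and use that $E_n$ is a section while all components of $\tilde H$ and all other exceptional curves lie in fibres, together with $H\sim_{\mathbb{Q}}-K_S$, to conclude that $E_n$ appears with coefficient $2$ (equivalently, discrepancy $-2$ in the paper's normalisation). The only difference is presentational — you split the computation into the separate contributions $b_n$ and $c_n$, whereas the paper computes the combined coefficient $\delta_n$ in one step.
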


\begin{proof}
\red{The proof of  \cite[Lemma~4.6]{KPZ12b} works verbatim for this generalized version.}
\end{proof}

Applying Lemma~\ref{lemma:KPZ-special} to $(S,D)$, we see that the
log pair $(S,D)$ is not log canonical at $P$. Thus, if there
exists no anticanonical divisor $T$ such that $(S,T)$ is not log
canonical at $P$, then we are done. Hence, to complete the proof
of Lemma~\ref{lemma:KPZ-cubic}, we assume that
there exists a \emph{unique} divisor $T\in |-K_{S}|$ such that
$(S,T)$ is not log canonical at $P$. Then
Lemma~\ref{lemma:KPZ-cubic} follows from the lemma below.

\begin{lemma}
\label{lemma:KPZ-final} There exists an effective \red{anticanonical} 
$\mathbb{Q}$-divisor $D^{\prime}$ on $S$ such that
 the log pair $(S,D^{\prime})$ is
not log canonical at  $P$ and
$\mathrm{Supp}(D^{\prime})$ does not contain at least one
irreducible component of $\mathrm{Supp}(T)$.
\end{lemma}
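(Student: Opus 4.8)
The strategy is to play Lemma~\ref{lemma:KPZ-special} against Lemma~\ref{lemma:convexity}. Lemma~\ref{lemma:KPZ-special} tells us that \emph{every} effective anticanonical $\mathbb{Q}$-divisor whose support is contained in $\operatorname{Supp}(D)$ automatically fails to be log canonical at $P$; so it suffices to produce such a divisor whose support omits at least one irreducible component of $T$. Note that part~(4) of Lemma~\ref{lemma:convexity} is of no use here, precisely because $(S,T)$ is \emph{not} log canonical at $P$ (it is the unique divisor in $|-K_S|$ with that property). Instead we will use only parts~(1)--(3) of Lemma~\ref{lemma:convexity} to control supports, and obtain the failure of log canonicity for free from Lemma~\ref{lemma:KPZ-special}.

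First I would split into two cases according to whether $\operatorname{Supp}(D)$ contains every irreducible component of $\operatorname{Supp}(T)$. If it does not, then $D'=D$ already works: it is an effective anticanonical $\mathbb{Q}$-divisor, it is not log canonical at $P$ by Lemma~\ref{lemma:KPZ-special} applied with $H=D$, and by hypothesis its support omits some component of $\operatorname{Supp}(T)$. So assume $\operatorname{Supp}(T)\subseteq\operatorname{Supp}(D)$. Then we may write $T=\sum_{i=1}^{r}b_iD_i$ with all $b_i\geqslant 0$. Since $T\in|-K_S|$ and $-K_S\cdot C_i\geqslant 1$ for each irreducible component $C_i$ of $T$, the divisor $T$ has at most $-K_S\cdot T=K_S^2=\deg S\leqslant 4$ irreducible components, while $r\geqslant 6$ by Lemma~\ref{lemma:KPZ-r}; hence $\operatorname{Supp}(T)\subsetneq\operatorname{Supp}(D)$ and in particular $T\neq D$.

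Now Lemma~\ref{lemma:convexity} applies to the pair $(D,T)$: there is a largest $\mu>0$ for which $D_\mu=(1+\mu)D-\mu T$ is effective; by part~(1) it is anticanonical ($D_\mu\sim_{\mathbb{Q}}D\sim_{\mathbb{Q}}-K_S$), and by part~(3) its support does not contain at least one irreducible component of $\operatorname{Supp}(T)$. Since $D_\mu$ is a non-negative combination of $D_1,\dots,D_r$, its support is contained in $\operatorname{Supp}(D)$, so Lemma~\ref{lemma:KPZ-special} (with $H=D_\mu$) shows that $(S,D_\mu)$ is not log canonical at $P$. Thus $D'=D_\mu$ has all the required properties, finishing Lemma~\ref{lemma:KPZ-final} and, with it, Lemma~\ref{lemma:KPZ-cubic}. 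The only genuine idea here is the observation in the first paragraph; there is no real obstacle beyond keeping track of which lemma supplies which of the two conditions on $D'$, together with the elementary bound on the number of components of an anticanonical curve on a del Pezzo surface of degree at most $4$.
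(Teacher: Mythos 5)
Your proposal is correct and follows essentially the same route as the paper: handle the easy case $D'=D$ first, use the component count ($\leqslant 4$ for $T$ versus $r\geqslant 6$ from Lemma~\ref{lemma:KPZ-r}) to conclude $T\neq D$, form $(1+\mu)D-\mu T$ via Lemma~\ref{lemma:convexity} to drop a component of $\mathrm{Supp}(T)$, and then get the failure of log canonicity for free from Lemma~\ref{lemma:KPZ-special} since the support stays inside $\mathrm{Supp}(D)$. Your explicit remark that part~(4) of Lemma~\ref{lemma:convexity} is unusable here and that Lemma~\ref{lemma:KPZ-special} substitutes for it is exactly the point the paper makes in the discussion following its proof.
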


\begin{proof}
If $\mathrm{Supp}(D)$ does not contain at least one irreducible
component of $\mathrm{Supp}(T)$, then we can simply put
$D=D^\prime$. Suppose that it is not the case, i.e., we
have $\mathrm{Supp}(T)\subseteq\mathrm{Supp}(D)$. Then $T\ne D$.
Indeed, the number of the irreducible components of $\mathrm{Supp}(D)$
is at least $6$ by Lemma~\ref{lemma:KPZ-r}. On the other hand, the
number of the irreducible components of $\mathrm{Supp}(T)$ is at most
$4$ because $-K_{S}\cdot T=K_{S}^2$ and $-K_{S}$
is ample.

Since $T\ne D$, there exists a positive rational number $\mu$ such
that the $\mathbb{Q}$-divisor $(1+\mu)D-\mu T$ is effective and
its support does not contain at least one irreducible component of
$\mathrm{Supp}(T)$. Put
$D^{\prime}=(1+\mu)D-\mu T$.  Note that $D'$ is also an effective anticanonical $\mathbb{Q}$-divisor on $S$.
By our construction, $\mathrm{Supp}(D^\prime)\subseteq\mathrm{Supp}(D)$.
Thus, the log pair $(S,D')$ is
not log canonical at  $P$ by Lemma~\ref{lemma:KPZ-special}. This completes
the proof.
\end{proof}

\begin{remark}
Note that $U\ne S\setminus\mathrm{Supp}(D^\prime)$, which implies
that the number of the irreducible components of
$\mathrm{Supp}(D^\prime)$ may be less than
$\mathrm{rk}\,\mathrm{Pic}(S)$. Because of this, we can apply
Lemma~\ref{lemma:convexity} only once here. This shows that we
really need to use the \emph{uniqueness} of the divisor $T$ in the anticanonical linear system
$|-K_{S}|$ such that $(S,T)$ is not log canonical at $P$  in the
proof of Lemma~\ref{lemma:KPZ-final}. Indeed,
if there is another divisor $T^\prime$ in $|-K_{S}|$ such that $(S,T^\prime)$ is  not log canonical at $P$,
then we would not be able to apply Lemma~\ref{lemma:convexity}
since we \red{might} have $D^{\prime}=T^\prime$.
\end{remark}

\bigskip
\footnotesize
\noindent\textit{Acknowledgments.}
The authors would like to express their thanks to the referee for the careful reading and the comments that improve this article. 
The first author was supported by AG Laboratory NRI-HSE, RF government grant, 11.G34.31.0023. The second author was supported by the Institute for Basic Science  (Grant No. CA1305-02).

\end{document}